\newcommand{\R}{{\mathbb R}}
\newcommand{\sgn}{\mathrm{sgn}}
\newcommand{\cotan}{\mathrm{cotan}}
\newcommand{\var}{\mathrm{Var}}
\newcommand{\E}{\mathbb E}
\newcommand{\poinc}{C_{\mathrm{P}}}
\newcommand{\sg}{\lambda}
\newcommand{\pdf}{\rho}
\newcommand{\Leb}{\mathrm{Leb}}
\newcommand{\truncnorm}{\mathcal{N}_{\vert [a,b]}}
\newcommand{\Sob}[2]{\mathcal{H}^{#1}_{#2}}   
\newcommand{\SobCent}[2]{\mathcal{H}^{#1, \mathrm{c}}_{#2}}   
\numberwithin{equation}{section}
\theoremstyle{plain}
\newtheorem{thm}{Theorem}
\newtheorem{lem}{Lemma}
\newtheorem{defi}{Definition}
\newtheorem{prop}{Proposition}
\newtheorem{cor}{Corollary}
\newtheorem{rem}{Remark}
\newcommand{\mb}[1]{\mathbf{#1}}
\let\oldtabular\tabular 
\renewcommand{\tabular}{\footnotesize\oldtabular}
\begin{document}

\title{\bf{Poincar\'e inequalities on intervals --\\ application to sensitivity analysis}}
\footnotetext[1]{Mines Saint-Etienne, UMR CNRS 6158, LIMOS, F--42023 Saint-\'{E}tienne, France -- \href{mailto:roustant@emse.fr}{roustant@emse.fr}}
\footnotetext[2]{Institut de Math\'ematiques de Toulouse, Universit\'e Paul Sabatier, 31062 Toulouse Cedex 9, France -- \href{mailto:franck.barthe@math.univ-toulouse.fr}{franck.barthe@math.univ-toulouse.fr}}
\footnotetext[3]{Electricit\'e de France R\&D, 6 quai Watier, Chatou, F--78401, France -- \href{mailto:bertrand.iooss@edf.fr}{bertrand.iooss@edf.fr}}
\author{\renewcommand{\thefootnote}{\arabic{footnote}}
\large
Olivier Roustant\footnotemark[1] ,
~Franck Barthe\footnotemark[2] ,
~Bertrand Iooss\footnotemark[2]~$^,$\footnotemark[3]}

\maketitle

\begin{abstract}
The development of global sensitivity analysis of numerical model outputs has recently raised new issues on 1-dimensional Poincar\'e inequalities. Typically two kind of sensitivity indices are linked by a Poincar\'e type inequality, which provide upper bounds of the most interpretable index by using the other one, cheaper to compute. This allows performing a low-cost screening of unessential variables. The efficiency of this screening then highly depends on the accuracy of the upper bounds in Poincar\'e inequalities.\\
The novelty in the questions concern the wide range of probability distributions involved, which are often truncated on intervals. After providing an overview of the existing knowledge and techniques, we add some theory about Poincar\'e constants on intervals, with improvements for symmetric intervals. Then we exploit the spectral interpretation for computing exact value of Poincar\'e constants of any admissible distribution on a given interval. We give semi-analytical results for some frequent distributions (truncated exponential, triangular, truncated normal), and present a numerical method in the general case.\\
Finally, an application is made to a hydrological problem, showing the benefits of the new results in Poincar\'e inequalities to sensitivity analysis.\\
\end{abstract}

\noindent {\small \begin{bf}Keywords: \end{bf}
Poincar\'e inequality, Spectral gap, Truncated distribution, Kummer's functions, Sobol-Hoeffding decomposition, Sobol indices, Derivative-based global sensitivity measures, Finite elements.}\\

\newpage 

\tableofcontents
\bigskip

\section{Introduction} 

\subsection{Motivation}
This research is motivated by sensitivity analysis of a costly numerical function $f(x_1, \dots, x_d)$ of independent random variables $x_i$. 
We denote by $\mu = \mu_1 \otimes \dots \otimes \mu_d$ the distribution of $\mathbf{x} = (x_1, \dots, x_d)$ and assume that $f(\mathbf{x}) \in L^2(\mu)$.
Such problem occurs for instance in computer experiments, where a physical phenomenon is studied with a complex numerical code (\cite{DeRocquigny_et_al_2008}).
One important question is \textit{screening}, i.e. to identify unessential variables, which can be done by computing several criteria, called sensitivity indices.
Among them, the variance-based indices -- or \emph{Sobol indices}, are often prefered by practitioners due to their easy interpretation (\cite{Iooss_Lemaitre_review}).
They are defined on the Sobol-Hoeffding decomposition (\cite{Hoeffding_1948}, \cite{Efron_Stein_1981}, \cite{Sobol_1993}),
\begin{equation*}
f(\mb{x}) = \sum_{I \subseteq \{1, \dots, d\}} f_I(\mb{x}_I)
= f_0 + \sum_{1 \leq i \leq d} f_i(x_i) 
+ \sum_{1 \leq i<j \leq d} f_{i,j}(x_i, x_j) 
+ \dots 
\end{equation*}
where $\mb{x}_I$ is the subvector extracted from $\mb{x}$ whose coordinates belong to $I$. 
The terms $f_I$ satisfy non-simplification conditions
$ E(f_I(\mb{x}_I) \vert \mb{x}_J) = 0$ for all strict subset $J \subset I$, equivalent to: 
\begin{equation} \label{eq:SobolNonSimplificationCondition}
\int f_I(\mb{x}_I) d\mu_i(\mb{x}_i) = 0
\end{equation}
for all $i \in I$ and all $\mb{x}_I$. Such conditions imply orthogonality, and lead to the variance decomposition: 
$$\var (f(\mb{x})) = \sum_{I \subseteq \{1, \dots, d\}} \var( f_I(\mb{x}_I) )$$
The ``total Sobol index'' $S_i^T$ is then defined as the ratio of variance of $f(\mathbf{x})$ explained by $x_i$ (potentially with other variables):
$$S_i^T = \var \left( \sum_{I \ni i} f_I(\mb{x}_I) \right) / D$$
where $D = \var(f(\mb{x}))$. Thus one can decide that $x_i$ is not influential if $S_i^T$ is less than (say) $5\%$.\\

Despite their nice interpretation, Sobol indices require numerous computations for their estimation. Then another global sensitivity index, called DGSM (Derivative-based Global Sensitivity Measure), can advantageously be used as a proxy \cite{Sobol_2009,Kucherenko_et_al_2009}. Defined by 
$$ \nu_i = \int \left( \frac{\partial f(\mb{x})}{\partial x_i} \right)^2 d\mu(\mb{x})$$ 
for $i=1,\dots,d$, they are cheaper to compute, especially when the gradient of $f$ is available (e.g. as output of an evaluation of a complex numerical code). As shown in \cite{Lamboni_et_al_2013}, Sobol indices and DGSM are connected by a 1-dimensional Poincar\'e-type inequality:
\begin{equation} \label{eq:PoincareTypeIneq}
S_i^T \leq C(\mu_i) \nu_i / D
\end{equation}
where $C(\mu_i)$ is a Poincar\'e constant for $\mu_i$. 
Recall that $\mu_i$ satisfies a Poincar\'e inequality if the energy of any centered function is controlled by the energy of its derivative: For all $g$ satisfying $\int g d\mu_i = 0$ there exists $C(\mu_i) > 0$ s.t.
\begin{equation} \label{eq:PoincareInequality}
\int g^2 d\mu_i \leq C(\mu_i) \int g'^2 d\mu_i
\end{equation}
Inequality~\ref{eq:PoincareTypeIneq} is easily obtained by applying the Poincar\'e inequality~\ref{eq:PoincareInequality} to the functions $x_i \mapsto \sum_{I \ni i} f_I(\mb{x}_I)$, which are centered for any choice of $\mb{x}_I$ with $i \in I$ (condition~\ref{eq:SobolNonSimplificationCondition}), 
and integrating with respect to the other variables $x_j$ ($j \neq i$).
This allows performing a ``low-cost'' screening based on the upper bound of Sobol indices in \ref{eq:PoincareTypeIneq} instead of Sobol indices directly.
Hence, one can decide that $x_i$ is not influential if 
$C(\mu_i) \frac{\nu_i}D$ is less than (say) $5\%$.\\

The efficiency of this low-cost screening strongly depends on the accuracy of the Poincar\'e-type inequality \ref{eq:PoincareTypeIneq}, and this motivates the investigation of 1-dimensional Poincar\'e inequalities. Notice that there is a large variety of probability distributions used in sensitivity analysis. They are often linked to prior knowledge about the range of variation of physical parameters. As a consequence most of them are continuous with finite support, possibly obtained by truncation. The most frequent probability density function (pdf) are: Uniform, (truncated) Gaussian, triangular, (truncated) lognormal, (truncated) exponential, (truncated) Weibull, (truncated) Gumbel (see for example \cite{DeRocquigny_et_al_2008}). Less frequently, it can be found: (inverse) Gamma, Beta, trapezoidal, generalized extreme value.\\

It is also important to notice that one may not easily reduce the problem to the case of the uniform distribution, by applying the standard reincreasing arrangement technique. 
Indeed by denoting $F_i$ the cdf of $\mu_i$, an idea would be to consider the function
$g(u_1, \dots, u_d) = f \left( F_1^{-1}(u_1), \dots, F_d^{-1}(u_d) \right)$ where the $u_i = F_i(x_i)$ are independent and uniform on $[0,1]$. Then the total Sobol indices of $f(x_1, x_2)$ and $g(u_1, u_2)$ are equal.
However, the derivatives of the transformations $F_i^{-1}$ can be large, and the DGSM computed on $g$ may be large and even infinite. For instance, if $f(x_1, x_2) = x_1 + x_2$ with $x_1, x_2$ i.i.d. $\mathcal{N}(0,1)$, the DGSM of $g(u_1, u_2) = \Phi^{-1}(u_1) + \Phi^{-1}(u_2)$ is equal, for each variable $u_i$, to:\\ 
$$\nu_g = \int_0^1 [(\Phi^{-1})'(s)]^2 ds = 
\int_0^1 \frac{1}{\phi (\Phi^{-1}(s))^2} ds = 
\int_{\R} \frac{1}{\phi(t)} dt = \int_{\R} e^{t^2/2}dt = +\infty$$
On the other hand, the DGSM $\nu_f$ of $f$ is equal to $1$ for each $x_i$, and the corresponding upper bound is $\poinc (\mathcal{N}(0,1))\frac{\nu_f}{D} = \frac12$, 
which is here exactly equal to the total Sobol index of $x_i$.\\

To conclude about motivations in sensitivity analysis, it is worth mentioning that the idea of low-cost screening can be extended to higher-order interactions, and also depends on the accuracy of 1-dimensional Poincar\'e inequalities  \cite{RoustantCrossDerivative}. It allows screening out useless interactions and discovering additive structures in $f$.\\

\subsection{Aim and plan}
Our aim is to bridge the gap between industrial needs coming from sensitivity analysis problems and the theory of Poincar\'e inequalities, by providing an accessible introduction to Poincar\'e inequalites for non-specialists, and by using and developping the theory in order to deal with 
the specific situations motivated by low-cost screening.

In Section~\ref{sec:background}, we present the general background on Poincar\'e inequalities, 
together with the main techniques avaible to establish them (Muckenhoupt's criterion, perturbation and transportation methods, spectral methods). 
 Most of these techniques provide upper bounds on the Poincar\'e constant for large classes of measures. 

In the literature of 1-dimensional Poincar\'e inequalities, the exact value of the Poincar\'e constant is known for very few measures, such as: Uniform, Gaussian (which are classical, see e.g. \cite{Ane_2000}), exponential \cite{Bobkov_Ledoux} or logistic \cite{Barthe_logistic}. More measures are needed for sensitivity analysis, as well as their restrictions to intervals. These situations were not studied specifically so far (apart from the Gaussian pdf, for which optimal constants are known on the whole real line and on intervals formed by successive zeros of Hermite polynomials, see e.g. \cite{Dautray_Lions_vol3}). Section~\ref{sec:generalResults} provides further results which allow to deal with the family of restrictions of a given probability measure to intervals. In the case of symmetric measures and intervals, we derive new improvements.

Section~\ref{sec:optimalConstants} deals with the exact constant for some distributions required by sensitivity analysis.
We present new semi-analytical results for the optimal constant of the triangular, truncated normal and truncated exponential. The optimal constant is obtained as the first zero of an analytic function.
We also provide a general numerical algorithm that converges to the optimal Poincar\'e constant.

Finally we come back to our initial motivation and apply this research to two test-cases (Section~\ref{sec:applications}). 
It is observed that the new results significantly improve on the exisiting ones, 
and now allow performing a low-cost screening of Sobol indices based on upper bounds computed by DGSMs.

For the ease of readability, the most technical proofs are postponed to the Appendix.

\section{Background on Poincar\'e inequalities} \label{sec:background}

In this section, we provide a quick survey of the main simple techniques allowing to derive Poincar\'e inequalities for probability measures on the real line. We often make regularity assumptions on the measures. This allows to avoid  technicalities, without reducing the scope for realistic applications. Indeed, all the measures we are interested in are
supported on an open interval $(a,b)$, and have a positive continuous, piece-wize continuouly differentiable density. 
Moreover, when $a$ is finite, they are monotonic on a neighborhood of $a$ (and similarly for $b$).

\subsection{Definitions} \label{sec:Definitions}

Consider an open interval of the real line $\Omega = (a,b)$ with $-\infty\le a<b\le +\infty$.
A locally integrable function $f:\Omega\to \mathbb R$ is weakly differentiable if there exists
a locally integrable function $g:\Omega\to \mathbb R$ such that for all functions $\phi$ of class $C^\infty$ with compact support in $\Omega$:
$$ \int_\Omega f(t) \phi'(t) dt= - \int_\Omega g(t) \phi(t) dt.$$
Then $g$ is a.e. uniquely determined  (more precisely two functions with this property coincide almost everywhere), it is called the weak derivative of $f$ and denoted by $f'$.

Let $\mu$ be a probability measure  on $\Omega$, and $f:\Omega\to \mathbb R$ be a  Borel measurable function. Recall that the variance of $f$ for $\mu$ is defined as
$$\var_\mu(f)=\inf_{a\in \mathbb R} \int_\Omega (f-a)^2 d\mu.$$
Obviously $\var_\mu(f)=+\infty$ if $\int f^2d\mu=+\infty$. When $\int f^2d\mu<+\infty$, it holds
$$\var_\mu(f)=\int f^2d\mu -\left( \int f\, d\mu\right)^2=\int \left(f-\int f\, d\mu \right)^2d\mu.$$

\begin{defi}\label{ded:poinc1}
Let  $\mu(dt)=\pdf(t)dt$ be an absolutely continuous probability measure $\Omega$. 
We say that $\mu$ verifies a \emph{Poincar\'e inequality} on $\Omega$ if there exists a constant $C<+\infty$ such that for all $f$ weakly differentiable functions $f$ on $\Omega$:
\begin{equation} \label{def:PoincareInequality0}
\var_\mu(f) \leq C \int_\Omega (f')^2 d\mu.
\end{equation} 
In this case, the smallest possible constant $C$ above is denote $\poinc(\mu)$, it is refered to as the Poincar\'e constant of the measure $\mu$.
\end{defi}
 
Observe that the above integrals are always defined, with values in $[0,+\infty]$. Roughly speaking,
a Poincar\'e inequality expresses in a quantitative way that
a function with a small weak derivative, measured in the sense of $\mu$, has to be close to a constant function again 
in the sense of $\mu$.

\begin{rem} \label{rem:caracWeekly}
Weakly differentiable functions are exactly the functions which admit (in their equivalence class for 
a.e. equality) a continuous version which satisfies (see e.g. \cite{Allaire_Book}, \cite{Brezis})
$$\forall x,y \in \Omega, \quad f(y) = f(x) + \int_x^y f'(t) \, dt.$$
Such functions are also called (locally) absolutely continuous. Their variations can be recovered by integrating their weak derivatives. It is therefore plain that they provide a good setting for Poincar\'e inequalities. On the contrary, it is not possible to 
work just with a.e. differentiable functions: for instance the famous Cantor function (a.k.a. the Devil's stairs) increases from 0 to 1 but is a.e. differentiable with zero derivative. However, everywhere differentiable functions with locally integrable derivative are weakly differentiable, see e.g. \cite{Rudin}.
\end{rem}

To be very precise, we should have denoted the Poincar\'e constant as $\poinc(\mu,\Omega)$.
Indeed, we could also consider the probability measure $\mu$ on $\Omega$ as acting on any larger open interval $\Omega'$. The restriction to $\Omega$ of weakly differentiable functions on $\Omega'$ are specific weakly differentiable function on $\Omega$ (they are continuous at boundary points). Therefore, satisfying a Poincar\'e inequality on $\Omega$ is formally a more demanding property. From now on, we will assume that $\Omega$ is the interior of the convex hull of the support of $\mu$, which is consistent with the notation $\poinc(\mu)$.
  
Obviously, some measures cannot satisfy a Poincar\'e inequalities, for instance the uniform measure on $(0,1)\cup(2,3)$ (one can choose a differentiable function $f$ on $(0,3)$ which is equal to 0 on $(0,1)$ and to 1 on $(2,3)$. Then $\int (f')^2 d\mu=0$).

\medskip
 
Next let us present an equivalent definition of Poincar\'e inequalities, in a more convenient analytic setting.
Let $L^2(\mu)$ be the set of (equivalence classes for a.e. equality of) measurable functions on $\Omega$  such that $\int_\Omega f^2 d\mu < +\infty$. 
We denote $\Vert . \Vert_{L^2(\mu)}$, or simply $\Vert . \Vert$, its associated norm:
$\Vert f \Vert = \left( \int_\Omega f^2 d\mu \right)^{1/2}$. 
The first weighted  Sobolev space $\Sob{1}{\mu}(\Omega)$ is defined by   
$$\Sob{1}{\mu}(\Omega) = \{ f \in L^2(\mu) \textrm{ such that } f' \in L^2(\mu) \},$$
where $f'$ is the weak derivative of $f$. 
It is well known that $\Sob{1}{\mu}(\Omega)$ is an Hilbert space with the norm $\Vert f \Vert_{\Sob{1}{\mu}(\Omega)}^2 = \Vert f \Vert^2 + \Vert f' \Vert^2$. 
More generally, for an integer $\ell \geq 1$, the space $\Sob{\ell}{\mu}(\Omega)$ is defined by:
$$\Sob{\ell}{\mu}(\Omega) = \{ f \in L^2(\mu) \textrm{ such that for all } k \leq \ell, f^{(k)} \in L^2(\mu)\}$$
where $f^{(k)}$ is the $k$-th weak derivative of $f$.
An important particular case is when $\Omega$ is bounded and there exists two positive constants $m, M$ such that
$$\forall t \in \overline{\Omega}, \quad 0 < m  \leq \pdf(t) \leq M$$
Then if $\Leb$ denotes the Lebesgue measure on $\Omega$, 
it holds that $L^2(\mu) = L^2(\Leb)$ and $\Sob{\ell}{\mu}(\Omega) = \Sob{\ell}{\Leb}(\Omega)$ for all positive integers $\ell$, and the norms on these spaces are equivalent when $\mu$ is replaced by $\Leb$. In other words, the weighted Sobolev spaces are equivalent to the usual Sobolev spaces. This remark will allow us to use several results which are available for $\Leb$ and a bounded $\Omega$. Firstly, $\Sob{1}{\mu}(\mu)$ then contains functions which are continuous on $\overline{\Omega}$ and piecewise $C^1$ on $\Omega$. 
Secondly, the spectral theory of elliptic problems (see e.g. \citep{Allaire_Book}, Chap. 7) will then be valid.\\
We now come back to the general case.

\begin{defi}\label{ded:poinc2}
Let  $\mu(dt)=\pdf(t)dt$ be an absolutely continuous probability measure on an open interval $\Omega$, such that $\pdf>0$ almost everywhere on $\Omega$.
We say that $\mu$ admits a \emph{Poincar\'e inequality} on $\Omega$ if there exists a constant $C<+\infty$ such that for all $f$ in $\Sob{1}{\mu}(\Omega)$ verifying $\int_\Omega f d\mu = 0$, we have:
\begin{equation} \label{def:PoincareInequality}
\int_\Omega f^2 d\mu \leq C \int_\Omega (f')^2 d\mu
\end{equation} 
The best possible constant $C$ is denoted  $\poinc(\mu)$. If there exists $f_{\mathrm{opt}}$, a centered function of $\Sob{1}{\mu}(\Omega)$, such that (\ref{def:PoincareInequality}) is an equality for $C=\poinc(\mu)$, we say that the inequality is \emph{saturated} by $f_{\mathrm{opt}}$.
\end{defi}
This definition means $\var_\mu(f) \leq C(\mu) \int_\Omega (f')^2 d\mu $ for weakly differentiable 
functions $f$ such that $f$ and $f'$ are square integrable for $\mu$. Hence Definition \ref{ded:poinc1} is formally stronger as it involves general weakly differentiable functions.
Nevertheless, the Poincar\'e inequality of  Definition \ref{ded:poinc2} implies the one of  Definition \ref{ded:poinc1}: first it is enough 
to consider functions $f$ with square integrable weak derivative $f'$, then one can apply the Poincar\'e inequality to certain truncations of $f$, which belong to $\Sob{1}{\mu}(\Omega)$, in order to  show that $f$ is also necessarily square integrable for $\mu$.

Poincar\'e inequalities are often stated in terms of the ratio of energies of a function and its derivative:
\begin{defi}
Let $f$ in $\Sob{1}{\mu}(\Omega)$ with $\int f^2 d\mu > 0$. The \emph{Rayleigh ratio} of $f$ is:
\begin{equation}
J(f) = \frac{\Vert f' \Vert^2}{\Vert f \Vert^2} = \frac{\int_\Omega f'^2 d\mu}{\int_\Omega f^2 d\mu}
\end{equation}
\end{defi}
Thus $\mu$ admits a Poincar\'e inequality if and only if the Rayleigh ratio admits a positive  lower bound over the subspace of centered functions $\SobCent{1}{\mu}(\Omega) = \{f \in \Sob{1}{\mu}(\Omega), \int_\Omega f d\mu = 0\}$. In that case,
$$\poinc(\mu) = \left( \inf_{f \in \SobCent{1}{\mu}(\Omega) - \{0\}} J(f) \right)^{-1}= \sup_{f \in \SobCent{1}{\mu}(\Omega) - \{0\}} J(f)^{-1}$$
The expression of $\poinc(\mu)$ as a supremum makes it easy to compute lower bounds, 
by choosing an appropriate test function $f$. An example is given below:
\begin{prop}
 $\poinc(\mu) \geq \var_\mu$, with equality if $\mu = \mathcal{N}(0,1).$
\end{prop}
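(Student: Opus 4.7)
The lower bound follows from a single test-function computation in the Rayleigh ratio formulation, while the equality case for the standard Gaussian relies on the Hermite expansion.

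For the inequality, I interpret $\var_\mu$ as $\var_\mu(\mathrm{id})$, the variance of the identity function under $\mu$. The plan is to plug the identity into the Rayleigh quotient. When $\int t^2\, d\mu < +\infty$, the function $\tilde f(t) := t - \int s\, d\mu(s)$ lies in $\SobCent{1}{\mu}(\Omega)$ with weak derivative identically equal to $1$, so
$$J(\tilde f) = \frac{\int_\Omega 1 \, d\mu}{\int_\Omega \tilde f^2 \, d\mu} = \frac{1}{\var_\mu(\mathrm{id})},$$
and the supremum characterization of $\poinc(\mu)$ stated just above gives $\poinc(\mu) \geq \var_\mu(\mathrm{id})$. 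If $\var_\mu(\mathrm{id}) = +\infty$, I would instead apply the Poincar\'e inequality to the Lipschitz truncations $f_N(t) = \max(-N, \min(N, t))$, which all lie in $\SobCent{1}{\mu}$ after recentering, have weak derivatives bounded by $1$, and satisfy $\var_\mu(f_N) \to +\infty$ by monotone convergence; this forces $\poinc(\mu) = +\infty$ and the inequality still holds in the extended reals.

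For equality when $\mu = \gamma = \mathcal{N}(0,1)$, the plan is to work in the Hermite basis $(H_n)_{n \geq 0}$, orthogonal in $L^2(\gamma)$ with $\|H_n\|^2 = n!$ and satisfying the differentiation rule $H_n' = n H_{n-1}$. Any centered $f \in \SobCent{1}{\gamma}(\R)$ decomposes as $f = \sum_{n \geq 1} a_n H_n$, giving $\|f\|^2 = \sum_{n \geq 1} a_n^2\, n!$, and expanding $f'$ in the same basis yields $\|f'\|^2 = \sum_{n \geq 1} a_n^2\, n \cdot n!$. Comparing term by term produces $\|f\|^2 \leq \|f'\|^2$, hence $\poinc(\gamma) \leq 1$. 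Combined with the lower bound $\poinc(\gamma) \geq \var(\gamma) = 1$ this gives equality, saturated by $H_1(t) = t$.

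The main obstacle is a clean justification of the Hermite expansion for general $f \in \Sob{1}{\gamma}(\R)$, namely the density of polynomials in $\Sob{1}{\gamma}(\R)$ and the identification of the expansion of $f'$ as the term-by-term derivative of that of $f$. Both are classical properties tied to the Ornstein--Uhlenbeck semigroup, but for a self-contained elementary argument it is cleaner to first prove the inequality for polynomial $f$ (where the identities are purely algebraic) and then extend to $\SobCent{1}{\gamma}(\R)$ by approximation.
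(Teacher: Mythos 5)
Your proposal is correct, and the core of it — plugging the test function $t \mapsto t - \E_\mu$ into the Rayleigh quotient to get $\poinc(\mu) \ge \var_\mu$ — is exactly what the paper does (the paper's entire proof of the lower bound is that one sentence). You add two things the paper omits: a truncation argument handling the case $\var_\mu = +\infty$, and an actual proof sketch of the Gaussian equality case via the Hermite basis, whereas the paper simply cites the literature. The Hermite calculation is the standard one and your arithmetic is right ($\|f'\|^2 = \sum_{n\ge 1} a_n^2\, n\cdot n! \ge \sum_{n\ge 1} a_n^2\, n! = \|f\|^2$), and you correctly flag the one nontrivial technical point you'd still need to nail down, namely density of polynomials in $\Sob{1}{\gamma}(\R)$ and term-by-term differentiation of the Hermite expansion; stating it for polynomials and then passing to the closure is indeed the clean way to do it. No gap beyond what you already identify; you just prove more than the paper bothers to.
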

\begin{proof}
 The lower bound is obtained for $f(x) = x - \E_\mu$. The equality case is well-known (see e.g. \citep{Ane_2000}).
\end{proof}


\subsection{A general criterion} 
The class of probability measures that admit a Poincar\'e inequality has been completely characterized in the work of Muckenhoupt \cite{Muckenhoupt_1972}. See also  \cite{Bobkov2009} for refinements. 

\begin{thm}[\cite{Muckenhoupt_1972}] \label{prop:Muckenhoupt}
 Let $\mu(dt) = \pdf(t)dt$ be a probability measure on  $\Omega=(a,b)$. Let 
 $m$ be a median of $\mu$  and define:
$$ A_-= \sup_{a < x <m}  \mu\big((a,x)\big) \int_x^m \frac{1}{\pdf(t)}dt, \qquad
A_+ = \sup_{m < x <b} \mu\big((x,b)\big) \int_m^x \frac{1}{\pdf(t)}dt,$$
with the convention $0\cdot\infty=0$.
Then  $\mu$ admits a Poincar\'e inequality iff $A_-$ and $ A_+$ are finite, and in this case
\begin{equation} \label{eq:Muckenhoupt}
\frac{1}{2} \max(A_-, A_+) \leq \poinc(\mu) \leq 4\max(A_-, A_+).
\end{equation}
\end{thm}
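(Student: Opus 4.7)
The standard route reduces the Poincar\'e inequality to one-dimensional Hardy-type inequalities on each side of the median. First I exploit the median: for any $f \in \Sob{1}{\mu}(\Omega)$ (represented by its continuous version), $\var_\mu(f) \leq \int_\Omega (f-f(m))^2\,d\mu$, since the variance is the infimum over centering constants. Splitting at $m$ and writing $f(x)-f(m)=\int_m^x f'(t)\,dt$ on $(m,b)$, and $f(m)-f(x)=\int_x^m f'(t)\,dt$ on $(a,m)$, the problem reduces to bounding each piece by the corresponding Dirichlet energy, i.e.\ to a pair of one-sided weighted Hardy inequalities with weight $\rho$.

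The key analytic input is the weighted Hardy inequality of Muckenhoupt: for $g$ locally integrable on $(m,b)$,
\begin{equation*}
\int_m^b \Bigl(\int_m^x g(t)\,dt\Bigr)^{2} \rho(x)\,dx \leq C \int_m^b g(t)^2 \rho(t)\,dt
\end{equation*}
holds with some finite $C$ iff $A_+<\infty$, and the smallest admissible $C$ lies in $[A_+,\,4A_+]$; the symmetric statement holds on $(a,m)$ with $A_-$. I would import this classical fact from \cite{Muckenhoupt_1972}; a self-contained proof of the upper half uses the identity $\int_m^b u^2\rho\,dx = 2\int_m^b u\,u'\,\mu((x,b))\,dx$ (integration by parts in $\rho = -\tfrac{d}{dx}\mu((x,b))$, with vanishing boundary terms for admissible $u$) followed by Cauchy--Schwarz, while the matching lower bound is obtained by testing with explicit functions of the form $g=\mathbf{1}_{(m,x_0)}/\rho$, suitably truncated so that all integrals are finite.

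Granting the two Hardy inequalities, the upper bound in \eqref{eq:Muckenhoupt} is immediate: applying them to $f'$ on each side and summing,
\begin{equation*}
\var_\mu(f) \leq 4A_-\int_a^m (f')^2\,d\mu + 4A_+\int_m^b (f')^2\,d\mu \leq 4\max(A_-,A_+)\int_\Omega (f')^2\,d\mu.
\end{equation*}
For the matching lower bound I test with one-sided functions. Let $g$ be admissible on $(m,b)$ with $g(m)=0$, and set $f = g - \int g\,d\mu$, extended by $0$ on $(a,m)$. Then $\int (f')^2\,d\mu = \int_m^b (g')^2\,d\mu$, while, since $\mu$ is absolutely continuous, its median satisfies $\mu((m,b))=1/2$; Cauchy--Schwarz gives $\bigl(\int g\,d\mu\bigr)^{2} \leq \tfrac12\int_m^b g^2\,d\mu$, whence $\int f^2\,d\mu \geq \tfrac12 \int_m^b g^2\,d\mu$. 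Optimizing over $g$ and invoking the lower half of the Hardy inequality yields $\poinc(\mu)\geq A_+/2$; symmetrically $\poinc(\mu)\geq A_-/2$.

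The main obstacle, in my view, is the weighted Hardy inequality with sharp-up-to-$4$ constants: the integration-by-parts and Cauchy--Schwarz upper bound is a short calculation, but the lower bound $C\geq A_+$ requires a careful choice of near-optimal test functions, which is the heart of Muckenhoupt's argument. Routine regularity issues---existence of a continuous representative of $f$, the meaning of $f(m)$, vanishing of boundary terms, and density of smooth compactly supported functions in $\Sob{1}{\mu}(\Omega)$---also have to be dispatched, but they are standard and can be handled by truncation and mollification.
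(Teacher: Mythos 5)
The paper states this theorem with a citation to Muckenhoupt and does not include a proof, so there is nothing in the text to compare against; your sketch is the standard Muckenhoupt argument (reduction at the median to two one-sided weighted Hardy inequalities), and it is correct. A few remarks on details. In the lower-bound step, the phrase ``extended by $0$ on $(a,m)$'' should be understood as extending $g$ (which is continuous since $g(m)=0$) before centering; with that reading $\int f^2\,d\mu = \var_\mu(g) = \int_m^b g^2\,d\mu - \bigl(\int_m^b g\,d\mu\bigr)^2$, and your Cauchy--Schwarz bound $\bigl(\int_m^b g\,d\mu\bigr)^2 \le \mu\bigl((m,b)\bigr)\int_m^b g^2\,d\mu = \tfrac12\int_m^b g^2\,d\mu$ gives exactly what is claimed. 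In the final optimization you should note that the near-extremal Hardy test functions (essentially truncations of $\mathbf 1_{(m,x_0)}/\rho$, integrated) do lie in $\Sob{1}{\mu}(\Omega)$; this is the main point where the ``routine regularity issues'' you flag actually have content, since $1/\rho$ may not be integrable near $b$, and the truncation at $x_0<b$ is what makes the test function admissible. Finally, you correctly identify that the lower half of the Hardy inequality ($C\ge A_+$) is where the real work lies; the integration-by-parts/Cauchy--Schwarz upper half you sketch does deliver $C\le 4A_+$ with a short computation, matching the factor $4$ in \eqref{eq:Muckenhoupt}.
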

This result explains how to estimate, up to a multiplicative factor, the Poincar\'e constant by 
a simpler quantity. As explained in \cite{BCR}, it can be interpreted as a reduction
from functions to sets (by decomposition functions according to their level sets): a Poincar\'e 
inequality is equivalent to a comparison between the measures of sets, and a notion of $\mu$-capacity. In one dimension, a further reduction allows to restrict to increasing function vanishing
at $m$, for which level sets for positive values are of the form $(x,b)$.

\subsection{Perturbation and transport}
Given a measure verifying a Poincar\'e inequality, one may try to deform it into another measure still having a finite Poincar\'e constant. Many such results exist in the literature. Here we mention two fundamental ones: the bounded perturbation principle, and the Lipschitz transportation principle.
They hold in very general  settings, but for the purpose of this article, it is enough
to state them on $\R$.

\begin{lem} \label{lem:perturbationPrinciple}
Let $\mu$ be a probability measure on $\Omega\subset \R$. Let $\psi:\Omega \to \mathbb R$ be a bounded measurable function and let $\tilde{\mu}$
be the probability measure given by $d\tilde{\mu}=e^{\psi} d\mu/Z$ where $Z$ is the normalizing constant. Then 
$$ \poinc(\tilde{\mu}) \le e^{\sup \psi-\inf\psi} \poinc(\mu).$$
\end{lem}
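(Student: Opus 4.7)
The plan is to use the variational characterization of the variance together with elementary two-sided bounds on the density $e^\psi/Z$. First I would observe that since $\psi$ is bounded, $e^{\inf\psi}/Z \le e^{\psi}/Z \le e^{\sup\psi}/Z$, so the Sobolev spaces $\Sob{1}{\mu}(\Omega)$ and $\Sob{1}{\tilde\mu}(\Omega)$ coincide as sets, and it suffices to prove the inequality $\var_{\tilde\mu}(f) \le e^{\sup\psi-\inf\psi}\poinc(\mu)\int(f')^2\,d\tilde\mu$ for every $f$ in this common space.

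The main step is a sandwich argument. Using $\var_{\tilde\mu}(f)=\inf_{c\in\R}\int (f-c)^2 d\tilde\mu$, I would pick the (possibly non-optimal for $\tilde\mu$) centering $c=\int f\, d\mu$, so that
\begin{equation*}
\var_{\tilde\mu}(f)\;\le\;\int \bigl(f-\textstyle\int f\,d\mu\bigr)^2\, d\tilde\mu
\;=\;\int \bigl(f-\textstyle\int f\,d\mu\bigr)^2\,\frac{e^\psi}{Z}\,d\mu
\;\le\;\frac{e^{\sup\psi}}{Z}\,\var_\mu(f).
\end{equation*}
Then I would apply the Poincar\'e inequality for $\mu$ (Definition~\ref{ded:poinc1}) to get $\var_\mu(f)\le \poinc(\mu)\int (f')^2 d\mu$, and rewrite the Dirichlet form back in terms of $\tilde\mu$ using the lower bound on the density:
\begin{equation*}
\int (f')^2\, d\mu \;=\; \int (f')^2\,\frac{Z}{e^\psi}\,d\tilde\mu \;\le\; Z\,e^{-\inf\psi}\int (f')^2\, d\tilde\mu.
\end{equation*}
Chaining these three inequalities, the normalization constants $Z$ cancel and the factors $e^{\sup\psi}$ and $e^{-\inf\psi}$ combine to produce exactly $e^{\sup\psi-\inf\psi}\poinc(\mu)$, which is the desired bound. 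Since this holds for arbitrary $f\in\Sob{1}{\tilde\mu}(\Omega)$, by Definition~\ref{ded:poinc2} we conclude $\poinc(\tilde\mu)\le e^{\sup\psi-\inf\psi}\poinc(\mu)$.

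There is no real obstacle: the result rests only on the trivial observation that deforming the density by a bounded multiplicative factor changes integrals by at most that factor, applied once to $f^2$ (going from $\tilde\mu$ to $\mu$) and once to $(f')^2$ (going back). The only point worth a line of care is the choice of centering constant, where picking the $\mu$-mean rather than the $\tilde\mu$-mean is what allows one to apply the Poincar\'e inequality for $\mu$ directly without an auxiliary centering argument.
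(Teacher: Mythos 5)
Your proof is correct and is precisely the elaboration of the paper's one-line sketch, which says the lemma follows ``at the level of Poincar\'e inequalities by using $\mathrm{Var}_{\tilde{\mu}}=\inf_{a\in \mathbb R}\int (f-a)^2 d\tilde{\mu}$ and applying obvious bounds on $\psi$.'' The choice of the $\mu$-mean as a suboptimal centering constant, the two-sided density bound $e^{\inf\psi}\le e^\psi\le e^{\sup\psi}$ applied once to $(f-c)^2$ and once to $(f')^2$, and the cancellation of $Z$ are exactly the ``obvious bounds'' the authors had in mind.
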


This fact is easily proved at the level of Poincar\'e inequalities by using $\mathrm{Var}_{\tilde{\mu}}=\inf_{a\in \mathbb R}
\int (f-a)^2 d\tilde{\mu}$ and applying obvious bounds on $\psi$.

\begin{lem} \label{lem:lipshitzTransport}
Let $\mu$ be an absolutely continuous probability measure on an open interval  $\Omega\subset\mathbb R$.
Let $T:\Omega \to \R$ be a Lipschitz map, (i.e. verifying that there exists $L\in \mathbb R$ such that for all $x,y\in \Omega$,  $|T(x)-T(y)|\le L |x-y| $) . Assume that $T\mu$, the image measure of $\mu$ by $T$,  is also absolutely continuous. Then
$$\poinc(T\mu)\le \|T\|_{Lip}^2 \poinc(\mu),$$ 
 where  $ \|T\|_{Lip}$ is the smallest possible value of the constant $L$ above.
\end{lem}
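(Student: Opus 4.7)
The natural plan is to transfer the Poincar\'e inequality from $T\mu$ back to $\mu$ by pulling back test functions via $T$. Given any admissible test function $g$ for $T\mu$, I set $h := g\circ T$ and aim to bound $\mathrm{Var}_{T\mu}(g)$ by $L^2 \poinc(\mu) \int (g')^2 \, d(T\mu)$ for every Lipschitz constant $L$ of $T$, then optimize over $L$ to obtain $\|T\|_{Lip}^2$.

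First, the change-of-variables formula for pushforward measures, $\int \phi \, d(T\mu) = \int \phi\circ T \, d\mu$ applied to $\phi = (g-c)^2$ and then to $\phi = g$, gives the variance identity
$$\mathrm{Var}_{T\mu}(g) = \inf_{c \in \R} \int (g-c)^2 \, d(T\mu) = \inf_{c \in \R} \int (h-c)^2 \, d\mu = \mathrm{Var}_\mu(h).$$
Second, I need the weak chain rule: since $T$ is $L$-Lipschitz, Rademacher's theorem gives that $T$ is differentiable almost everywhere with $|T'|\le L$ a.e., and a classical chain rule for Lipschitz compositions (see e.g. Ziemer, \emph{Weakly Differentiable Functions}) yields that $h$ is weakly differentiable on $\Omega$ with $h'(x) = g'(T(x))\,T'(x)$ for a.e.\ $x \in \Omega$. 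Third, I apply the Poincar\'e inequality for $\mu$ to $h$ and bound $|T'|$ pointwise:
$$\mathrm{Var}_\mu(h) \le \poinc(\mu) \int (h')^2 \, d\mu \le L^2 \poinc(\mu) \int g'(T(x))^2 \, d\mu(x) = L^2 \poinc(\mu) \int (g')^2 \, d(T\mu),$$
the last equality again by change of variables. Taking the infimum over admissible $L$ and then the supremum over centered $g$ gives $\poinc(T\mu) \le \|T\|_{Lip}^2\, \poinc(\mu)$.

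The main obstacle is the chain rule step, which is not purely algebraic because $g'$ is only a priori defined $(T\mu)$-almost everywhere, and one must check that $g'\circ T$ is well-defined $\mu$-almost everywhere. This is exactly where the hypothesis that $T\mu$ is absolutely continuous enters: it ensures that any two representatives of $g'$ agree on a set of full $T\mu$-measure, hence by the pushforward definition their compositions with $T$ agree on a set of full $\mu$-measure, so $h'$ is unambiguous as an element of $L^2(\mu)$. A safer alternative route, if one wants to avoid invoking the measure-theoretic chain rule, is to establish the inequality first for smooth test functions $g$ (where the classical chain rule applies) and then extend by density, using that smooth functions are dense in $\Sob{1}{T\mu}(\cdot)$ under the regularity assumptions standing throughout the paper.
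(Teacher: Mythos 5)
Your proof follows the same plan as the paper's: pull test functions back via $T$, use the pushforward identity for the variance, apply the Poincar\'e inequality for $\mu$ to the composite, and transfer the derivative bound back via change of variables. The one place the two arguments differ is in the packaging of the chain-rule step. The paper avoids invoking the a.e.\ chain rule altogether by working with the pointwise quantity $|f'|(x) := \limsup_{y\to x} |f(y)-f(x)|/|y-x|$, which is defined for every $x$ and satisfies $|(g\circ T)'|(x)\le L\,|g'|(T(x))$ \emph{everywhere}, not merely a.e.; the identification of $|g'|$ with $|g'(\cdot)|$ is then deferred to a single $\nu$-a.e.\ statement inside the final integral. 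You instead use the a.e.\ chain rule for Lipschitz compositions and correctly identify that absolute continuity of $T\mu$ is precisely what makes $g'\circ T$ well-defined $\mu$-a.e.; this is a valid resolution, though one should be aware that the chain rule for two Lipschitz maps holds with the convention that the product is zero on $\{T'=0\}$ even if $g'$ is undefined at $T(x)$ there (this does not affect your inequality since $|h'|=0$ on that set). Your fallback via density through smooth $g$ is also sound, and is essentially what the paper does by restricting to locally Lipschitz test functions.
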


\begin{proof}
It is convenient here to work with locally Lipschitz functions (i.e. functions which are Lipschitz on any compact interval). They are weakly differentiable with locally bounded derivative (see e.g.  \cite{Brezis}). However one can also define pointwise and as a whole their  ``absolute value of the derivative'' as 
$$|f'|(x):=\limsup_{y\to x} \frac{|f(y)-f(x)|}{|y-x|}.$$ 
By Rademacher's theorem, a locally Lipschitz function is differentiable almost everywhere, 
and the latter coincides a.e. with the absolute value of the derivative of $f'(x)$. 
By hypothesis, $|f'|(x)=|f'(x)|$ will hold almost surely in the sense of $\mu$ and $T\mu$ as well.

Set $\nu=T\mu$. By a density argument it is enough to prove the Poincar\'e inequality for $\nu$
for all locally Lipschitz functions.
Let $g$ be a locally Lipschitz function on $\mathbb R$. Then $f:=g\circ T$ is also locally Lipschitz on $\Omega$, and $\mathrm{Var}_\mu(f)\le \poinc(\mu) \int |f'|^2 d\mu$.
 Since $\nu$ is the image measure of $\mu$ by $T$, $\mathrm{Var}_\mu(f)=\mathrm{Var}_\nu(g)$. Observe that $f=g\circ T$ is also
 locally Lipschitz and verifies $|f'|(x)\le L |g'|(T(x))$ for all $x$. Consequently
   $$\int |f'|^2 d\mu\le L^2 \int  (|g'|(T(x)))^2 d\mu(x)=   L^2 \int  |g'|^2 d\nu.$$
Hence we have proved that $\mathrm{Var}_\nu(g) \le \poinc(\mu)  L^2 \int  |g'|^2 d\nu.$
The result follows.
\end{proof}

In the case of probability measures $\mu, \nu$ on the real line, and when  $\mu$ has no atoms,
a natural map which pushes $\mu$ forward to $\nu$ is the monotonic map $T:=F_\nu^{-1}\circ F_\mu$ (here $F_\nu^{-1}$ stands for the  
generalized left inverse). It remains to estimate the Lipschitz norm of $T$.

\begin{lem}
Let $\mu$ and $\nu$ be probability measures on $\R$. Assume that $\mu(dt)=\pdf_\mu(t) dt$ where $\pdf_\mu$ is positive and continous on $(a_\mu, b_\mu)$ ($-\infty\le a_\mu, b_\mu\le+\infty$) and   vanishes outside. Let us make the same structural assumption for $\nu$. Then $T:=F_\nu^{-1}\circ F_\mu$ is well defined and differentiable on $(a_\mu, b_\mu)$, with
 $ T'=\pdf_\mu/ \pdf_\nu\circ F_\nu^{-1}\circ F_\mu$. Consequently its Lipschitz norm is 
 $$\|T\|_{Lip}= \sup_{(a_\mu, b_\mu)} \frac{\pdf_\mu}{ \pdf_\nu\circ F_\nu^{-1}\circ F_\mu}=\sup_{(0,1)} \frac{\pdf_\mu\circ F_\mu^{-1} }{ \pdf_\nu\circ F_\nu^{-1}} =\sup_{(a_\nu, b_\nu)} \frac{\pdf_\mu\circ F_\mu^{-1}\circ F_\nu }{ \pdf_\nu}.$$
\end{lem}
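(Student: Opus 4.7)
The plan is to show that under the stated positivity and continuity assumptions, $F_\mu$ and $F_\nu$ are genuine $C^1$-diffeomorphisms, so $T$ is a composition of $C^1$-diffeomorphisms whose derivative can be computed by the chain rule, and then the three expressions for $\|T\|_{Lip}$ will follow by elementary changes of variable.

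First I would observe that since $\rho_\mu$ is positive and continuous on $(a_\mu,b_\mu)$, the cdf $F_\mu(x)=\int_{a_\mu}^x \rho_\mu(t)\,dt$ is $C^1$ and strictly increasing on $(a_\mu,b_\mu)$, with $F_\mu'=\rho_\mu>0$; moreover $F_\mu$ maps $(a_\mu,b_\mu)$ bijectively onto $(0,1)$. The same is true for $F_\nu$, so $F_\nu^{-1}$ (the generalized left inverse) coincides on $(0,1)$ with the honest inverse of $F_\nu$, and is a $C^1$-diffeomorphism from $(0,1)$ onto $(a_\nu,b_\nu)$ with $(F_\nu^{-1})'(u)=1/\rho_\nu(F_\nu^{-1}(u))$. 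Consequently $T=F_\nu^{-1}\circ F_\mu$ is a $C^1$-diffeomorphism from $(a_\mu,b_\mu)$ onto $(a_\nu,b_\nu)$.

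Next, applying the chain rule,
\[
T'(x)=(F_\nu^{-1})'(F_\mu(x))\cdot F_\mu'(x)=\frac{\rho_\mu(x)}{\rho_\nu\!\left(F_\nu^{-1}(F_\mu(x))\right)},
\]
which is the claimed formula. Since $T$ is $C^1$ on an interval with nonnegative derivative, standard mean value arguments show that $T$ is Lipschitz (when restricted to any subinterval) iff $T'$ is bounded, and in that case $\|T\|_{Lip}=\sup_{(a_\mu,b_\mu)} T'$. (If the sup is infinite, then both sides of the claimed equality are $+\infty$, consistent with $T$ not being Lipschitz.)

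Finally, the two alternative expressions are obtained by bijective changes of variable. Setting $u=F_\mu(x)$, the diffeomorphism $F_\mu:(a_\mu,b_\mu)\to(0,1)$ gives
\[
\sup_{x\in(a_\mu,b_\mu)}\frac{\rho_\mu(x)}{\rho_\nu(F_\nu^{-1}(F_\mu(x)))}
=\sup_{u\in(0,1)}\frac{\rho_\mu(F_\mu^{-1}(u))}{\rho_\nu(F_\nu^{-1}(u))},
\]
and then setting $u=F_\nu(y)$ for $y\in(a_\nu,b_\nu)$ yields
\[
\sup_{u\in(0,1)}\frac{\rho_\mu(F_\mu^{-1}(u))}{\rho_\nu(F_\nu^{-1}(u))}
=\sup_{y\in(a_\nu,b_\nu)}\frac{\rho_\mu(F_\mu^{-1}(F_\nu(y)))}{\rho_\nu(y)},
\]
completing the proof. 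Since everything reduces to the chain rule plus two bijective substitutions, there is no real obstacle; the only minor care needed is to make sure that under the continuity/positivity hypotheses $F_\nu^{-1}$ really is a genuine differentiable inverse on $(0,1)$ (so that the chain rule applies pointwise), which is precisely what the assumptions on $\rho_\mu,\rho_\nu$ guarantee.
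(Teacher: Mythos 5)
Your proof is correct. The paper itself does not actually supply a proof of this lemma -- it is stated without one and then used directly to derive the Bobkov--Houdr\'e bound (\ref{eq:BHtransport}) -- so there is no written argument to compare against, but the route you take (observing that the positivity and continuity of $\pdf_\mu$ and $\pdf_\nu$ make $F_\mu$ and $F_\nu$ strictly increasing $C^1$ bijections onto $(0,1)$, applying the inverse function theorem and chain rule to get $T'$, and then the two substitutions $u=F_\mu(x)$ and $u=F_\nu(y)$ to rewrite the supremum) is precisely the elementary argument the authors are implicitly relying on. One minor tightening worth making: your sentence ``$T$ is Lipschitz (when restricted to any subinterval) iff $T'$ is bounded'' is phrased awkwardly -- the clean statement is that for a $C^1$ increasing function on an interval, $T$ is Lipschitz on that interval iff $\sup T'<\infty$, in which case $\|T\|_{Lip}=\sup T'$ (one direction by the mean value theorem, the other by taking difference quotients), and both sides of the asserted identity are $+\infty$ otherwise, exactly as you note.
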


The previous two lemmas can be applied when $\mu(dx)=\exp(-|x|) dx/2$, the double exponential measure. In this case $\pdf_\nu\circ F_\nu^{-1}(t)=\min(t,1-t)$ and 
$\poinc(\mu)=4$. This is how Bobkov and Houdr\'e \cite{Bobkov_Houdre_1997} established the following estimate: 
   
\begin{equation}\label{eq:BHtransport}
   \poinc(\nu)\le 4 \left(  \sup_{x\in (a_\nu, b_\nu)} \frac{\min(F_\nu(x), 1-F_\nu(x))}{ \pdf_\nu(x)}\right)^2.
   \end{equation}  
  These authors also deduced from this approach that for log-concave probability measures $\nu$  on $\R$, with median $m$, $\poinc(\nu)\le 1/\pdf_\nu(m)^2$.
  \smallskip
   
Actually,  we can improve on (\ref{eq:BHtransport}) by choosing the logistic measure $\mu(dx)=\frac{e^x}{(1+e^x)^2} dx$ instead of the double exponential measure in the proof. Indeed, in this case
$\pdf_\mu\circ F_\mu^{-1}(t)=t(1-t)$ is smaller than $\min(t,1-t)$, while the Poincar\'e constant is the same 
$\poinc(\mu)=4$, as shown in \cite{Barthe_logistic}. Consequently

\begin{cor}
Let $\nu$ be a probability measure on $ (a, b)\subset \R$, with a positive continous density on $(a,b)$. Then
  $$\poinc(\nu)\le 4 \left(  \sup_{x\in (a, b)} \frac{F_\nu(x)( 1-F_\nu(x))}{ \pdf_\nu(x)}\right)^2.$$
\end{cor}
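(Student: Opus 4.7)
The plan is to imitate the argument sketched just before the statement, replacing the double exponential measure by the logistic measure $\mu(dx) = \frac{e^x}{(1+e^x)^2}\,dx$ on $\R$, which is cited as satisfying $\poinc(\mu)=4$. First I would record the crucial structural identity: for the logistic measure, $F_\mu(x) = e^x/(1+e^x)$, so a direct computation gives $\rho_\mu \circ F_\mu^{-1}(t) = t(1-t)$ for $t \in (0,1)$. This is the key point that yields the $F_\nu(1-F_\nu)$ factor on the right-hand side, as opposed to the coarser $\min(F_\nu,1-F_\nu)$ of \eqref{eq:BHtransport}.

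Next I would introduce the monotone transport map $T = F_\nu^{-1} \circ F_\mu : \R \to (a,b)$, which is well defined and differentiable since $\rho_\nu$ is positive and continuous on $(a,b)$ and the logistic density is positive and continuous on $\R$. By the preceding lemma, its Lipschitz norm is
\begin{equation*}
\|T\|_{Lip} = \sup_{t \in (0,1)} \frac{\rho_\mu \circ F_\mu^{-1}(t)}{\rho_\nu \circ F_\nu^{-1}(t)} = \sup_{t \in (0,1)} \frac{t(1-t)}{\rho_\nu \circ F_\nu^{-1}(t)}.
\end{equation*}
Performing the change of variable $t = F_\nu(x)$, $x \in (a,b)$, this supremum rewrites as $\sup_{x \in (a,b)} \frac{F_\nu(x)(1-F_\nu(x))}{\rho_\nu(x)}$.

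Finally, since $T$ pushes $\mu$ forward to $\nu$, the Lipschitz transport lemma (Lemma~\ref{lem:lipshitzTransport}) yields
\begin{equation*}
\poinc(\nu) = \poinc(T\mu) \le \|T\|_{Lip}^2\, \poinc(\mu) = 4\,\|T\|_{Lip}^2,
\end{equation*}
which is exactly the desired bound. I expect no real obstacle: the only point worth checking carefully is that the hypotheses of the preceding lemma are met (both $\mu$ and $\nu$ have positive continuous densities on the interiors of their supports), so that the explicit formula for $\|T\|_{Lip}$ applies and the transport argument goes through even when $(a,b)$ is bounded.
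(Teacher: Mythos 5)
Your proof is correct and follows exactly the route the paper sketches in the surrounding text: replace the double exponential by the logistic measure in the Bobkov–Houdré transport argument, verify $\rho_\mu\circ F_\mu^{-1}(t)=t(1-t)$, identify the Lipschitz norm of the monotone transport via the preceding lemma, and conclude with Lemma~\ref{lem:lipshitzTransport} and $\poinc(\mu)=4$. There is no meaningful difference from the paper's (implicit) proof.
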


\begin{rem}
We have mentioned in the introduction that rewriting the sensitivity analysis questions in terms of uniform variables does not provide good results for our purposes. This approach amounts to consider the monotone transport from uniform variables to the variables of interest, and 
this transport is not Lipschitz for unbounded variables. The results of the present section show that 
rewriting the problem in terms of logistic variables is more effective.
\end{rem}


\subsection{Spectral interpretation} \label{sec:spectral}
When $\mu$ has a continuous density that does not vanish on a compact interval $[a,b]$, then $\mu$ admits a Poincar\'e constant on $[a,b]$,  as follows from  the Muckenhoupt condition (Theorem \ref{prop:Muckenhoupt}) or from the bounded perturbation principle (Lemma~\ref{lem:perturbationPrinciple}).
Furthermore, modulo a small additional regularity condition, the Poincar\'e inequality is saturated and the Poincar\'e constant is related to a spectral problem, as detailed now.

\begin{thm} \label{prop:SpectralTheorem}
Let $\Omega=(a,b)$ be a bounded open interval of the real line, and assume that $\mu(dt)=\pdf(t)dt=e^{-V(t)}dt$ where $V$ is continuous and piecewise $C^1$ on $\overline{\Omega}=[a,b]$. Consider the three following problems:\\
\begin{itemize}
\item[(P1)] $\textrm{Find } f \in \Sob{1}{\mu}(\Omega) \textrm{ s.t. } \quad
J(f) = \frac{\Vert f' \Vert^2}{\Vert f \Vert^2} \quad \textrm{ is minimum under} \quad \int f d\mu=0$ \\
\item[(P2)] $\textrm{Find } f \in \Sob{1}{\mu}(\Omega) \textrm{ s.t. } \quad
\langle f', g' \rangle = \lambda \langle f, g \rangle \quad \forall g \in \Sob{1}{\mu}(\Omega)$ \\
\item[(P3)] $\textrm{Find } f \in \Sob{2}{\mu}(\Omega) \textrm{ s.t. } \quad f'' - V'f' = - \lambda f \quad \textrm{ and } \quad f'(a) = f'(b) = 0$ \\
\end{itemize}
Then the eigenvalue problems (P2) and (P3) are equivalent, and their eigenvalues form an increasing sequence $(\lambda_k)_{k \geq 0}$ of non-negative real numbers that tends to infinity. Moreover $0$ is an eigenvalue, and all eigenvalues are simple.
The eigenvectors $(u_k)_{k \geq 0}$ form a Hilbert basis of $L^2(\mu)$ and verify:
\begin{equation}
u_k'(x) = \frac{\lambda_k}{\pdf(x)} \int_x^b u_k(t)\pdf(t)dt
\end{equation}
In particular, up to a modification with Lebesgue measure zero, the $u_k$'s are $C^1$ on $[a,b]$ and if $\pdf$ is of class $C^k$, $u_k$ is of class $C^{k+1}$.\\
\noindent
Furthermore when $\lambda = \lambda_1$, the first positive eigenvalue, $(P2)$ and $(P3)$ are equivalent to $(P1)$ and the minimum of $(P1)$ is attained for $f = u_1$. In other words, the optimal Poincar\'e constant is $\poinc(\mu)= 1/\lambda_1$ and the inequality is saturated by a non-zero solution of (P3). Finally, $u_1$ is strictly monotonic.
\end{thm}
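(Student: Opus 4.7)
The plan is to prove the four claims in order: equivalence of (P2) and (P3); existence and basic properties of the eigenvalue sequence; the integral representation with the regularity bootstrap; and the variational identification with (P1) together with strict monotonicity of $u_1$. First, since $V = -\log \pdf$, the ODE in (P3) is equivalent to the Liouville form $(\pdf f')' = -\lambda \pdf f$. Given a solution of (P3), multiplying by $g \in \Sob{1}{\mu}(\Omega)$ and integrating by parts on $(a,b)$ yields (P2), the Neumann conditions $f'(a) = f'(b) = 0$ killing the boundary terms. Conversely, testing (P2) first against $g \in C^\infty_c(\Omega)$ gives the weak form of the ODE; since $\pdf$ is continuous and bounded away from zero on $[a,b]$, the weighted and unweighted Sobolev spaces are norm-equivalent, so classical elliptic regularity places $f$ in $\Sob{2}{\mu}(\Omega)$ and makes the ODE pointwise. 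Reintroducing general $g$ and integrating by parts recovers the Neumann conditions.

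For the spectral structure, I would work with the symmetric continuous coercive bilinear form $a(f,g) = \int f' g' d\mu + \int fg\, d\mu$ on $\Sob{1}{\mu}(\Omega)$. The Lax--Milgram inverse $T: L^2(\mu) \to L^2(\mu)$ is self-adjoint, and compact thanks to Rellich's compact embedding $\Sob{1}{\mu}(\Omega) \hookrightarrow L^2(\mu)$ (valid because $\Omega$ is bounded and the weighted Sobolev space coincides with the classical one). Its spectral decomposition yields an orthonormal basis $(u_k)$ of $L^2(\mu)$ with eigenvalues $1/(1+\lambda_k)$, giving a non-negative increasing sequence $\lambda_k \to +\infty$; non-negativity is clear from the Rayleigh ratio, and $\lambda_0 = 0$ is realized by the constants. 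Simplicity is a standard ODE observation: the solution space of $f'' - V' f' = -\lambda f$ with the single condition $f'(a) = 0$ is one-dimensional. Integrating the Liouville form from $x$ to $b$ and using $u_k'(b) = 0$ gives the integral representation. Continuity of $\pdf$ makes it continuous in $x$, so $u_k \in C^1$; a bootstrap of the same formula raises regularity to $C^{k+1}$ whenever $\pdf \in C^k$. The Courant--Fischer min-max characterization of $\lambda_1$ over the centered subspace $\SobCent{1}{\mu}(\Omega)$ identifies $\lambda_1 = 1/\poinc(\mu)$ with the minimum attained by $u_1$, which solves (P1).

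The delicate point, and the main obstacle I anticipate, is strict monotonicity of $u_1$. I would argue by contradiction: suppose $u_1'(x_0) = 0$ at some $x_0 \in (a,b)$. The integral formula together with $\int u_1 \, d\mu = 0$ forces $\int_a^{x_0} u_1 \, d\mu = 0 = \int_{x_0}^b u_1\, d\mu$. If moreover $u_1(x_0) = 0$, Cauchy--Lipschitz applied at $x_0$ to the ODE with both initial data zero gives $u_1 \equiv 0$, a contradiction. Otherwise, set $p = \mu([a,x_0])$, $q = 1-p$, and introduce the test functions $\phi_1 = (u_1 - u_1(x_0))\mathbf{1}_{[a,x_0]}$ and $\phi_2 = (u_1 - u_1(x_0))\mathbf{1}_{[x_0,b]}$; both lie in $\Sob{1}{\mu}(\Omega)$ thanks to continuity at $x_0$, and have disjoint supports. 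Multiplying $(\pdf u_1')' = -\lambda_1 \pdf u_1$ by $u_1$ and integrating separately on $[a,x_0]$ and $[x_0,b]$ — all boundary terms vanishing from $u_1'(a) = u_1'(x_0) = u_1'(b) = 0$ — yields the localized identities $\int_a^{x_0}(u_1')^2 d\mu = \lambda_1 A_1$ and $\int_{x_0}^b (u_1')^2 d\mu = \lambda_1 A_2$ with $A_1, A_2$ the piecewise $L^2$ energies of $u_1$. A short computation then shows that $f = q\phi_1 - p\phi_2$ is centered, nonzero, and satisfies
$$J(f) \;=\; \lambda_1 \cdot \frac{q^2 A_1 + p^2 A_2}{q^2 A_1 + p^2 A_2 + pq\, u_1(x_0)^2} \;<\; \lambda_1,$$
contradicting the minimality of $\lambda_1$ over $\SobCent{1}{\mu}(\Omega)$. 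Hence $u_1'$ has no interior zero and $u_1$ is strictly monotonic.
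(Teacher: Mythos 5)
Your proof is correct, and the bulk of it --- the coercive shifted form $\alpha(f,g)=\int f'g'\,d\mu+\int fg\,d\mu$, Lax--Milgram plus the Rellich compact embedding to get a compact self-adjoint resolvent, the Hilbert basis of eigenvectors, the Cauchy--Lipschitz argument for simplicity, and the min--max identification of $\lambda_1$ over centered functions --- follows the same route as the paper. Two points differ in a meaningful way.

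For the regularity step proving $(P2)\Rightarrow(P3)$, you invoke ``classical elliptic regularity,'' while the paper proves the integral representation
$f'(x)=\frac{\lambda}{\pdf(x)}\int_x^b f\,\pdf$
directly from the weak formulation (via Fubini, following Bobkov's Lemma~4.3), and then reads regularity off this formula. The two are equivalent in one dimension, but the paper's route is more explicit and simultaneously produces the displayed formula for $u_k'$ that the statement asserts; your route obtains that formula \emph{after} the fact by integrating the Liouville form $(\pdf f')'=-\lambda\pdf f$ from $x$ to $b$. Given that the coefficients are only piecewise~$C^1$, the one-dimensional explicit bootstrap is arguably safer than citing a regularity theorem, though both are valid here.

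Your argument for strict monotonicity of $u_1$ is genuinely different from the paper's and is a nice alternative. The paper proceeds in two steps: first it shows that the minimizer is (weakly) monotone by a symmetrization argument --- replacing $f$ by $g(x)=\int_a^x|f'|$, noting $\int(g')^2\,d\mu=\int(f')^2\,d\mu$ while $\var_\mu g\ge\var_\mu f$ via the double-integral representation of the variance, and deducing from equality that $|f'|=\pm f'$ everywhere --- and only then upgrades to strictness using the integral formula and the sign pattern of $f$. You instead argue directly by contradiction: if $u_1'(x_0)=0$ at an interior point, the integral formula forces $\int_a^{x_0}u_1\,d\mu=\int_{x_0}^b u_1\,d\mu=0$; either $u_1(x_0)=0$, which kills $u_1$ by Cauchy uniqueness, or you build the centered competitor $f=q\phi_1-p\phi_2$ from the two halves, for which the localized integrations by parts (all boundary terms vanishing at $a$, $x_0$, $b$) give a Rayleigh quotient strictly below $\lambda_1$ because the extra term $pq\,u_1(x_0)^2>0$ inflates only the denominator. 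This skips the nonlinear symmetrization entirely and proves strictness in one shot. It does require confirming that $\phi_1,\phi_2\not\equiv0$, which follows easily (e.g.\ if $u_1\equiv u_1(x_0)$ on $[a,x_0]$ then $\lambda_1 u_1(x_0)=0$, contradicting the case assumption), so the argument is complete.
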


Although the equivalence between (P1) and (P3) is well-known, it was not easy to find a self-contained proof. Several arguments can be found in the literature of functional inequalities, and especially in \cite{Bobkov2009} for mixed Dirichlet and Neumann boundary conditions $f(a)=f'(b)=0$. We also found complementary arguments in the literature of numerical analysis, for instance in \cite{Dautray_Lions_vol3}, for the uniform distribution. 
Since neither of them are fully corresponding to our setting, we have inserted below a proof of Theorem~\ref{prop:SpectralTheorem}, making a synthesis of the ideas of the two literature fields. In particular, we adopt the Hilbertian point of view chosen in \cite{Dautray_Lions_vol3}, which is central for estimating the optimal Poincar\'e constant, as seen in Section~\ref{sec:numerical_method}, and adapt the short and elementary proofs of \cite{Bobkov2009} coming under more general principles (maximum principle, regularity theorem). 

\begin{proof}[Proof of Theorem~\ref{prop:SpectralTheorem}]
Since $\Omega$ is bounded an $\pdf$ is continuous on $\overline{\Omega}$, there exists two positive real numbers $m, M$ such that:
$$\forall t \in \overline{\Omega}, \quad 0 < m  \leq \pdf(t) \leq M$$
Then, as mentioned in \S~\ref{sec:Definitions}, $L^2(\mu)$ and all weighted Sobolev spaces $\Sob{\ell}{\mu}(\Omega)$ are topologically unchanged when we replace $\mu$ by the Lebesgue measure on $\Omega$. This allows using the spectral theory of elliptic problems (see e.g. \cite{Allaire_Book}, Chap. 7) on usual Sobolev spaces with bounded $\Omega$.\\

More precisely, let us consider problem (P2), and denote $\mathcal{H} = L^2(\mu)$ and $\mathcal{V} = \Sob{1}{\mu}(\Omega)$. Then the injection $\mathcal{V} \subset \mathcal{H}$ is compact, and $\mathcal{V}$ is dense in $\mathcal{H}$.
(P2) can be written as an eigenvalue problem of the form:
\begin{equation} \label{eq:weak_formulation_initial}
a(f, g) = \lambda \langle f, g \rangle \qquad \forall g \in  \mathcal{V}
\end{equation}
where $a(f, g) = \langle f', g' \rangle$ and $\langle ., . \rangle$ denotes the scalar product on $\mathcal{H}$. 
In order to apply spectral theory, $a$ should be coercive with respect to $\mathcal{V}$, i.e. $\exists C>0$ s.t. $a(f,f) \geq C \langle f, f \rangle_{\Sob{1}{\mu}(\Omega)}$ for all $f$, which is not true here. A convenient way to overcome this issue is to consider the equivalent shifted problem (see \cite{Dautray_Lions_vol3}, \S 8.2.): 
\begin{equation} \label{eq:weak_formulation}
\alpha(f, g) = (\lambda+1) \langle f, g \rangle \qquad \forall g \in \mathcal{V}
\end{equation}
where $\alpha(f,g) = \langle f', g' \rangle + \langle f, g \rangle$ 
is the usual scalar product on $\mathcal{V}$.
It is coercive on $\mathcal{V}$.
Then we can apply Theorem~7.3.2 in \cite{Allaire_Book}: The possible eigenvalues form an increasing sequence $(\lambda_k+1)_{k \geq 0}$ of non-negative values that tends to infinity
and the eigenvectors $(u_k)_{k \geq 0}$ form a Hilbert basis of $L^2(\mu)$. Further, from (\ref{eq:weak_formulation_initial}) with $f=g$, we have $\lambda_k \geq 0$.
Now remark that $0$ is an eigenvalue and the corresponding eigenspace is spanned by the constant function $u_0 = 1$. Indeed, taking $g=f$ in (\ref{def:PoincareInequality}) leads to $\int_a^b f'(x)^2 e^{-V(x)}dx =0$ and $f$ is a constant function.\\

Now let us prove the equivalence between (P2) and (P3). We restrict the presentation to  $\lambda>0$ since the case $\lambda=0$ is direct, using a Sturm-Liouville form of (P3), i.e. $(f'V)' = -\lambda f$. Formally the link comes from an integration by part of the left hand side of (P2):
\begin{equation} \label{eq:IntByPart}
\int_a^b f' g' \pdf= f'(b)\pdf(b)g(b) - f'(a)\pdf(a)g(a) - \int_a^b (f'\pdf)' g
\end{equation}
Since this must be equal to $\lambda \int_a^b fg\pdf$ for all $g \in \Sob{1}{\mu}(\Omega)$, we should have $f'(a)=f'(b)=0$ and $(f'\pdf)' = - \lambda f \pdf$ which is problem (P3).
Actually this method, read in the reverse sense, shows that (P3) implies (P2).
Indeed if $f \in \Sob{2}{\mu}(\Omega)$ then $f'\pdf \in \Sob{1}{\mu}(\Omega)$ and the integration by part is valid.
However, to see that (P2) implies (P3), an argument of regularity must be used since $f$ is only assumed to belong to $\Sob{1}{\mu}(\Omega)$.
This is achieved by proving that, $\mu$-almost surely,
\begin{equation} \label{eq:IntegralFormOfPrime}
f'(x) = \frac{\lambda}{\pdf(x)} \int_x^b f(t)\pdf(t)dt.
\end{equation}
Indeed, this implies that $f'\pdf$ is $C^1$ on $[a,b]$
and (\ref{eq:IntByPart}) is valid. Furthermore, (\ref{eq:IntegralFormOfPrime}) also implies $f'(a)=f'(b)=0$ and this gives (P3).
The proof of (\ref{eq:IntegralFormOfPrime}) is postponed to the appendix.\\

Now let us prove that all eigenvalues are simple. Notice that, from (\ref{eq:IntegralFormOfPrime}), all solutions of (P3) are locally Lipschitz, up to a modification of Lebesgue measure $0$. Thus when $\lambda$ is an eigenvalue, the vector space of solutions of the second order linear differential equation $f'' - V'f' = -\lambda f$ has dimension 2. Adding Neumann conditions, the dimension is equal to 1.
Indeed, let $f$ and $g$ be two solutions of $(P3)$. Then $f(b) \neq 0$ otherwise $f$ would be identically null by uniqueness of the Cauchy problem (since $f'(b)=0$). Hence there exists a real number $c$ such that $g(b) = c f(b)$. Then consider $h = g - cf$. We have $h(b)=h'(b)=0$ implying, by unicity of the Cauchy problem, that $h=0$, i.e. $g = c f$.\\

In this Hilbertian framework, the equivalence between (P2) and (P1) is easily seen. Indeed, if $f \in \Sob{1}{\mu}(\Omega)$ verifies $\int f d\mu  = 0$ then $f$ is orthogonal to $1$ and hence $f = \sum_{k = 1}^{+\infty} f_k u_k$. Then, it holds:
$$ J(f) = \frac{\Vert f' \Vert^2}{\Vert f \Vert^2} 
= \frac{\alpha(f,f)}{\Vert f \Vert^2} - 1 
= \frac{\sum_{k = 1}^{+\infty} \lambda_k f_k^2}{\sum_{k = 1}^{+\infty}f_k^2} \geq \lambda_1$$
with equality iff $f$ is proportional to $u_1$. This also proves that the minimum in (P1) is attained precisely on the 1-dimensional space spanned by $u_1$.\\

Finally we report to the Appendix the proof of monotonicity properties as well as connection to variation calculus.
\end{proof}

\begin{rem} Another solution to make coercive the bilinear form $a(f,g)=\langle f',g' \rangle$  is to project onto the space of centered functions $\SobCent{1}{\mu}(\Omega)$, as done in \cite{Dautray_Lions_vol3}. Indeed, on that space, coercivity is equivalent to the existence of a Poincar\'e constant, which can be proved independently by the bounded perturbation principle as mentioned above. However, this seems a less convenient setting for the numerical method developed in Section~\ref{sec:numerical_method} since the `hat' functions used do not belong to $\SobCent{1}{\mu}(\Omega)$.
\end{rem}

\begin{defi}[Spectral gap] \label{def:sg}
The smallest strictly positive eigenvalue $\lambda_1$ of $Lf = f'' - V'f'$ with Neumann boundary conditions $f'(a)=f'(b)=0$ is called \emph{Neumann spectral gap}, or simply \emph{spectral gap}, and denoted by $\sg(\mu)$.
\end{defi}

We conclude this section by a proposition showing that, under regularity conditions on $V$, the Neumann problem (P3) can be written as a Dirichlet problem.
\begin{prop} \label{prop:NeumannEquivDirichlet}
If $V$ is of class $C^2$, then (P3) is equivalent to:
\begin{itemize}
\item[(P4)] $\textrm{Find } h \in \Sob{2}{\mu}(\Omega) \textrm{ s.t. } \quad h'' - V'h' - V''h = - \lambda h \quad \textrm{ and } \quad h(a) = h(b) = 0$ \\
\end{itemize}
\end{prop}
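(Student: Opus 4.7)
The natural idea is the substitution $h = f'$, which transforms Neumann eigenfunctions of (P3) into Dirichlet eigenfunctions of (P4). Formally, differentiating $f'' - V'f' = -\lambda f$ once gives $f''' - V''f' - V'f'' = -\lambda f'$, i.e.\ $h'' - V'h' - V''h = -\lambda h$, and the boundary conditions $f'(a)=f'(b)=0$ immediately become $h(a)=h(b)=0$. The equivalence I would prove concerns strictly positive eigenvalues; note that $\lambda=0$ corresponds to constants in (P3) (which differentiate to $h\equiv 0$), so it does not contribute to (P4).

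For the direction (P3) $\Rightarrow$ (P4), the point is to justify the differentiation above. Since $V\in C^2$, the density $\rho = e^{-V}$ is $C^2$, so by the regularity statement in Theorem~\ref{prop:SpectralTheorem} any eigenfunction $f$ of (P3) is $C^3$ on $[a,b]$. Setting $h:=f'$ then gives a $C^2$ function on $[a,b]$, hence an element of $\Sob{2}{\mu}(\Omega)$, and the computation above is valid in the classical sense.

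For the converse (P4) $\Rightarrow$ (P3), assume $\lambda>0$ and $h$ solves (P4). The idea is to define
\[
f := \frac{1}{\lambda}\bigl(V'h - h'\bigr)
\]
and verify that it solves (P3). I would first upgrade the regularity of $h$: in one dimension $\Sob{2}{\mu}(\Omega)$ embeds into $C^1([a,b])$, and the ODE $h'' = V'h' + (V''-\lambda)h$ then forces $h''\in C^0$ since $V\in C^2$, so $h$ is classically $C^2$. Then a direct differentiation, using the equation for $h$, yields $f' = \frac{1}{\lambda}(V''h + V'h' - h'') = h$, whence $f''=h'$ and $f\in\Sob{2}{\mu}(\Omega)$. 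By definition $f'' - V'f' = h' - V'h = -\lambda f$, and the Neumann conditions $f'(a)=h(a)=0$, $f'(b)=h(b)=0$ come for free.

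The main obstacle I anticipate is the regularity bootstrap needed in the converse direction: one must pass from the weak assumption $h\in \Sob{2}{\mu}(\Omega)$ to classical $C^2$ regularity in order to legitimately manipulate the ODE pointwise and guarantee that $f$ lies in $\Sob{2}{\mu}(\Omega)$. Once this regularity is secured, the correspondence $h\leftrightarrow f$ is a routine calculation, and the equivalence of the two eigenvalue problems (for $\lambda>0$) follows at once.
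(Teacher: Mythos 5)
Your proof is correct, and the converse direction takes a genuinely different route from the paper's. For (P3)~$\Rightarrow$~(P4) you do exactly what the paper does: differentiate, use $h=f'$, and invoke the regularity guaranteed by Theorem~\ref{prop:SpectralTheorem}. For (P4)~$\Rightarrow$~(P3) the paper defines $f$ as the centered primitive of $h$, observes that $(Lf)'=-\lambda f'$ so $Lf=-\lambda f + c$, and then kills the constant $c$ by integrating $Lf\,e^{-V}=(f'e^{-V})'$ over $(a,b)$ and using the Dirichlet conditions plus the centering $\int f\,d\mu=0$. You instead produce the explicit algebraic formula $f=\tfrac1\lambda(V'h-h')$ and verify directly that $f'=h$, that $Lf=-\lambda f$, and that the Neumann conditions hold. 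The two constructions yield the same $f$ (your $f$ is automatically the centered primitive of $h$, as $\int f\,d\mu=-\tfrac1\lambda[he^{-V}]_a^b=0$), but your explicit formula bypasses the integration-constant argument at the cost of requiring $\lambda>0$ up front --- a restriction you correctly flag, and which the paper leaves implicit. Your regularity bootstrap (Sobolev embedding into $C^1$ on a bounded interval, then upgrading $h''$ to $C^0$ via the ODE) is also sound, though it is a somewhat heavier justification than the paper's appeal to Theorem~\ref{prop:SpectralTheorem} for $f\in\mathcal H^3(\mu)$.
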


\begin{proof} 
Define $L$ and $K$ the operators
$$Lf = f'' - V'f' \qquad Kf = f''-V'f' -V''f$$
Let $f$ be a solution of (P3). Since $V$ is of class $C^2$ then by Theorem~\ref{prop:SpectralTheorem}, $f \in \mathcal{H}^3(\mu)$. Now define $h=f'$. We have:
$K(f')=(Lf)'=-\lambda f'$ and $h$ is solution of (P4).\\
Conversely, let $h$ be a solution of (P4) and let $f$ be the primitive function of $h$ such that $\int f d\mu = 0$. Then $(Lf)' = -\lambda f'$ and there exists $c \in \mathbb{R}$ such that $Lf = -\lambda f + c$. Remarking that $\int_a^b L(f) e^{-V} = \int_a^b (f'e^{-V})' = 0$, the centering condition implies $c=0$ which proves that $f$ is a solution of (P3).
\end{proof}

\subsection{More techniques}
We conclude this quick overview by briefly quoting two other methods.

The first one plays a major role in the study of multidimensional functional inequalities. It has its roots in early works by H\"ormander and Brascamp-Lieb, and was developped by Bakry and his collaborators who introduced and exploited a notion of curvature and dimension for diffusion generators, see e.g. \cite{Ane_2000}. The simplest instance of these results is as follows:

\begin{thm}
Let $\mu$ be a probability measure on an open interval $\Omega$ with $d\mu(x)=e^{-V(x)} 1_\Omega(x) \,dx$ where $V$ is twice continuously differentiable. If there exists $r>0$ such that  for all $x\in \Omega$, $V''(x)\ge r$ then $\poinc(\mu)\le 1/r$.
\end{thm}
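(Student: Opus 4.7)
The plan is to exploit the spectral characterization of $\poinc(\mu)$ (Theorem~\ref{prop:SpectralTheorem}) combined with the Dirichlet reformulation (Proposition~\ref{prop:NeumannEquivDirichlet}) in order to produce an integration-by-parts identity that exposes $V''$ explicitly. I will first treat a bounded $\Omega = (a,b)$ and then extend to a general $\Omega$ by truncation.

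\textbf{Bounded case.} By Theorem~\ref{prop:SpectralTheorem} we have $\poinc(\mu) = 1/\sg(\mu)$, where $\sg(\mu)=\lambda_1>0$ is the first positive Neumann eigenvalue, carried by a strictly monotonic eigenfunction $u_1$. Since $V\in C^2$, Proposition~\ref{prop:NeumannEquivDirichlet} tells us that $h:=u_1'$ solves the Dirichlet problem
\[
h'' - V'h' - V''h = -\lambda_1 h, \qquad h(a)=h(b)=0,
\]
and $h\not\equiv 0$ by monotonicity of $u_1$.

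\textbf{Key identity.} Using the factorisation $(h''-V'h')e^{-V} = (h'e^{-V})'$, I multiply the ODE by $h\,e^{-V}$, integrate over $(a,b)$, and integrate by parts in the leading term; the boundary contribution vanishes thanks to $h(a)=h(b)=0$. What remains is
\[
\lambda_1 \int_a^b h^2\, d\mu = \int_a^b (h')^2\, d\mu + \int_a^b V''\, h^2\, d\mu.
\]
Discarding $\int (h')^2\,d\mu \geq 0$ and using $V''\geq r$ pointwise yields $\lambda_1\int h^2\,d\mu \geq r\int h^2\,d\mu$. Since $h\not\equiv 0$, this gives $\lambda_1 \geq r$, i.e.\ $\poinc(\mu)=1/\lambda_1\leq 1/r$.

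\textbf{Unbounded case.} If $\Omega$ is unbounded I truncate to $\Omega_n:=\Omega\cap(-n,n)$ and consider the normalised restriction $\mu_n$ of $\mu$. Its potential differs from $V|_{\Omega_n}$ by the additive constant $\log Z_n$, so it again satisfies the hypothesis $V_n''\geq r$, and the bounded case gives $\poinc(\mu_n)\leq 1/r$. Applying the Poincar\'e inequality for $\mu_n$ to a compactly supported smooth $f$ and letting $n\to\infty$ allows one to recover, by dominated convergence, the same inequality for $\mu$; a density argument then extends it to all of $\Sob{1}{\mu}(\Omega)$.

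\textbf{Main obstacle.} The integration-by-parts identity is so short that the bounded case is almost immediate once (P4) is available; the genuine technical point is therefore the passage to unbounded $\Omega$, where one has to verify that the centring constants $\int f\,d\mu_n$ and the energies $\int f^2\,d\mu_n$, $\int (f')^2\,d\mu_n$ all converge to their $\mu$-counterparts on a class of test functions rich enough to yield the Poincar\'e inequality for $\mu$. A more direct alternative would be to prove the Brascamp--Lieb inequality $\mathrm{Var}_\mu(f)\leq \int (f')^2/V''\,d\mu$ via the Poisson equation $-L\phi=f$, which works on any interval at once, but this would require resolvent tools not otherwise used in the paper; the spectral route above has the advantage of relying only on results already established.
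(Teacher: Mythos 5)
The paper does not actually prove this theorem: it states it as a classical Bakry--\'Emery curvature-dimension bound and refers the reader to the literature (\cite{Ane_2000}), so there is no paper proof to compare against. Your argument, however, is a correct and self-contained derivation built entirely from machinery already in the paper, which is a genuine contribution.

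On the bounded case, the chain is sound. Theorem~\ref{prop:SpectralTheorem} gives $\poinc(\mu)=1/\lambda_1$ with a strictly monotonic eigenfunction $u_1$, and (since $V\in C^2$, hence $\pdf\in C^2$, hence $u_1\in C^3$) Proposition~\ref{prop:NeumannEquivDirichlet} applies and $h=u_1'$ solves the Dirichlet problem with $h\not\equiv 0$. Writing $(h''-V'h')e^{-V}=(h'e^{-V})'$, multiplying the ODE by $he^{-V}$, integrating, and using $h(a)=h(b)=0$ to kill the boundary term gives exactly
\[
\lambda_1\int h^2\,d\mu=\int (h')^2\,d\mu+\int V''\,h^2\,d\mu,
\]
and dropping the nonnegative first term and using $V''\ge r$ forces $\lambda_1\ge r$. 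This is precisely the one-dimensional Bakry--\'Emery computation, expressed in the Dirichlet form the paper already set up; it is efficient and entirely rigorous.

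For the unbounded case, your sketch (truncate, apply the bounded result, pass to the limit by dominated convergence plus a density argument) would work, but it re-derives in a more laborious way something the paper already provides: Lemma~\ref{lem:restriction} shows $I\mapsto\poinc(\mu_{|I})$ is non-decreasing, and Lemma~\ref{lem:continuitySupport} gives $\poinc(\mu)=\lim_{\epsilon\to 0}\poinc(\mu_{|I_\epsilon})=\sup_\epsilon\poinc(\mu_{|I_\epsilon})$. Since each $\poinc(\mu_{|I_\epsilon})\le 1/r$ by the bounded case (the truncated potential differs from $V$ only by an additive normalising constant, so still has $V''\ge r$), the conclusion $\poinc(\mu)\le 1/r$ is immediate. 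Invoking that lemma directly would remove the ``genuine technical point'' you flag about convergence of centring constants and energies, since the lemma has already resolved it. In short: the proof is correct; the bounded case is the standard curvature argument cleanly adapted to the paper's spectral setting; the unbounded case should simply cite Lemmas~\ref{lem:restriction} and~\ref{lem:continuitySupport} rather than re-proving them informally.
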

The estimate is exact for the standard Gaussian measure.

\medskip
The second result is known as Chen's variational formula \cite{Chen}. Its main advantage is to express the Poincar\'e constant as an infimum  (while, as already mentioned, its definition is given as a supremum). This allows to obtain upper bounds rather easily. We state it in a different form, which appeared in \cite{DW}. 

\begin{thm}
Let $d\mu(x)=e^{-V(x)} 1_{[a,b]}(x) dx$ be a probability measure on a segment, with $V$ twice continuously differentiable on $[a,b]$. Then 
$$\sg(\mu)=\frac{1}{\poinc(\mu)}=\sup_{g'>0}\, \inf_{(a,b)} \frac{(-Lg)'}{g'},$$
where $Lf=f''-V'f'$ and the supremum is taken over three times differentiable functions on $(a,b)$, having positive first derivative.
\end{thm}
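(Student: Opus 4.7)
My plan is to interpret Chen's identity as a Collatz--Wielandt-type formula for the Dirichlet operator $K$ of Proposition~\ref{prop:NeumannEquivDirichlet}. The key reduction is the elementary identity $(Lg)' = L(g') - V''g'$; setting $h := g'$ and $Kh := h'' - V'h' - V''h$, Chen's formula becomes $\sg(\mu) = \sup_{h>0}\, \inf_{(a,b)} (-Kh)/h$, with the supremum ranging over positive $h$ admitting a $C^3$ primitive. By Proposition~\ref{prop:NeumannEquivDirichlet}, the function $h_1 := u_1'$ solves the Dirichlet eigenproblem $Kh_1 = -\sg(\mu)\, h_1$ with $h_1(a) = h_1(b) = 0$. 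A short ODE argument using $u_1'' = V'u_1' - \sg(\mu) u_1$ together with Cauchy--Lipschitz uniqueness and the strict monotonicity of $u_1$ given by Theorem~\ref{prop:SpectralTheorem} shows that $h_1 > 0$ on $(a,b)$ (up to an irrelevant sign): an interior zero $u_1'(x_0)=0$ would force either a local extremum of $u_1$, contradicting monotonicity, or $u_1 \equiv 0$.

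The lower bound $\sup \ge \sg(\mu)$ is then immediate: since $V\in C^2$, Theorem~\ref{prop:SpectralTheorem} gives $u_1 \in C^3([a,b])$, so $g := u_1$ is admissible and $(-Lg)'/g' = \sg(\mu)\, h_1/h_1 = \sg(\mu)$ identically. For the reverse inequality, fix an admissible $g$, write $h = g'$ and $\mu_0 := \inf_{(a,b)} (-Kh)/h$, so that $Kh + \mu_0 h \le 0$ pointwise. The central tool is that $K$ is formally self-adjoint in $L^2(\pdf\,dx)$: two integrations by parts using $\pdf' = -V'\pdf$ produce
\begin{equation*}
\int_a^b (Kh)\, h_1\, \pdf\, dx \;=\; \int_a^b h\, (Kh_1)\, \pdf\, dx \;+\; \bigl[\pdf\,(h' h_1 - h\, h_1')\bigr]_a^b.
\end{equation*}
Since $h_1 \ge 0$ on $[a,b]$ and vanishes at the endpoints, one has $h_1'(a) \ge 0$ and $h_1'(b) \le 0$, while $h(a), h(b) \ge 0$ by continuity of the positive function $h$; hence the boundary bracket is nonnegative. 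Substituting $Kh_1 = -\sg(\mu) h_1$ and the pointwise bound $Kh \le -\mu_0 h$ (multiplied by $h_1 \pdf \ge 0$), one obtains
\begin{equation*}
-\mu_0 \int_a^b h\, h_1\, \pdf\, dx \;\ge\; \int_a^b (Kh)\, h_1\, \pdf\, dx \;\ge\; -\sg(\mu) \int_a^b h\, h_1\, \pdf\, dx,
\end{equation*}
so that $\mu_0 \le \sg(\mu)$ after dividing by the strictly positive integral $\int h\, h_1\, \pdf\, dx$.

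The main obstacle I expect is making the integration by parts rigorous when $g$ is only assumed $C^3$ on the open interval $(a,b)$: the boundary values of $h'h_1$ and $h h_1'$ must be well-defined and the boundary bracket must carry the correct sign. I would handle this by first integrating on $[a+\varepsilon, b-\varepsilon]$ and passing to the limit, exploiting the fact that $h_1$ and $h_1'$ extend continuously to $[a,b]$ (as $u_1 \in C^3([a,b])$) together with monotone convergence of the remaining integrals. If instead the proof tacitly restricts to $g \in C^3([a,b])$, this subtlety disappears. A secondary technicality is the strict positivity of $h_1$ on the whole open interval, addressed in the first paragraph; once both points are settled, the Collatz--Wielandt argument above yields the equality.
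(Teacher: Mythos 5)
The paper does not actually prove this theorem: it is quoted directly from the references [Chen] and [DW], so there is no ``paper's own proof'' to compare against. Judged on its own terms, your plan is the right one in its core structure. The identity $(Lg)'=K(g')$ (with $K$ the Dirichlet operator from Proposition~\ref{prop:NeumannEquivDirichlet}) correctly reduces Chen's formula to a Collatz--Wielandt statement for $K$, the choice $g=u_1$ gives the easy inequality $\sup\ge\sg(\mu)$ since $u_1\in C^3([a,b])$ and $u_1'>0$ on $(a,b)$ by Theorem~\ref{prop:SpectralTheorem}, and the integration-by-parts identity together with the sign of the boundary bracket and the pointwise bound $Kh\le-\mu_0 h$ does yield $\mu_0\le\sg(\mu)$ when everything is defined up to the boundary. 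The signs all check out: since $h_1(a)=h_1(b)=0$ with $h_1\ge 0$, one has $h_1'(a)\ge 0\ge h_1'(b)$, and the bracket $[\pdf(h'h_1-hh_1')]_a^b$ is indeed nonnegative provided $h(a),h(b)\ge 0$ are meaningful.

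The genuine gap is exactly the one you flag but do not close: the theorem allows $g$ that is only $C^3$ on the \emph{open} interval with $g'>0$ on $(a,b)$, so $h=g'$ and $h'$ need not admit limits at $a,b$, and the boundary bracket is not a priori well-defined, let alone nonnegative. Your proposed fix -- integrate on $[a+\varepsilon,b-\varepsilon]$ and pass to the limit -- does not automatically work: from the truncated identity one only gets $B_\varepsilon \le (\sg(\mu)-\mu_0)\int_{a+\varepsilon}^{b-\varepsilon}hh_1\pdf$, and to reach a contradiction with $\mu_0>\sg(\mu)$ one would need $\limsup_\varepsilon B_\varepsilon\ge 0$, which is not evident when $h$ or $h'$ is unbounded (nor is it evident that $\int_a^b hh_1\pdf<\infty$). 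The same difficulty recurs if one tries the equivalent interior-maximum argument on $w:=h_1/h$, since $w$ could in principle be unbounded near an endpoint. To actually close the argument one must either (i) tacitly restrict the supremum to $g\in C^3([a,b])$, in which case your proof is complete, or (ii) prove separately that bad boundary behaviour of $h$ forces $\inf_{(a,b)}(-Kh)/h=-\infty$ (in which case the inequality is vacuous), for instance by a Riccati-type analysis of $(-Kh)/h$ near the endpoints. As written, step (ii) is asserted to be ``a secondary technicality'' but is really where the work lies for the class of test functions stated in the theorem.
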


As an example of application, Chen's formula recovers the following classical statement, which often allows to calculate the spectral gap. It is the reciprocal of the fact that a solution of the spectral problem (P3) corresponding to the smallest positive eigenvalue is necessarily strictly monotonic (one part of Theorem~\ref{prop:SpectralTheorem}). 
\begin{cor} \label{cor:sgWhenMonotoneSolution}
Under the framework and notations of Section~\ref{sec:spectral}, if $f$ is a strictly monotonic solution of problem (P3) 
$$Lf = -\lambda f, \quad f'(a) = f'(b) = 0$$ 
for some $\lambda>0$, then $\lambda$ is the spectral gap.\\
The same is true for problem (P4) replacing `monotonic' by `non vanishing'.
\end{cor}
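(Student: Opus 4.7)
The plan is to deduce the corollary from Chen's variational formula for the direction $\sg(\mu)\geq \lambda$, and from the description of $\sg(\mu)$ as the smallest positive Neumann eigenvalue for the reverse direction $\lambda \geq \sg(\mu)$.

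For the first direction, I would first reduce to the case $f'>0$ on $(a,b)$ by replacing $f$ with $-f$ if necessary (which still solves (P3) and remains strictly monotonic). Then $f$ qualifies as a test function in Chen's formula: the regularity statement in Theorem~\ref{prop:SpectralTheorem} together with the $C^2$ hypothesis on $V$ makes $f$ smooth enough (three times differentiable), and $f'$ is strictly positive on $(a,b)$ by assumption. The key computation is simply $(-Lf)'=(\lambda f)'=\lambda f'$, so the pointwise ratio $(-Lg)'/g'$ with $g=f$ equals the constant $\lambda$ on all of $(a,b)$. In particular $\inf_{(a,b)} (-Lf)'/f'=\lambda$, and inserting this choice of $g$ inside the supremum in Chen's formula yields $\sg(\mu)\geq \lambda$.

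For the reverse inequality, I would observe that $f$ is a nontrivial solution of (P3) (it cannot vanish identically since it is strictly monotonic), so $\lambda$ is a Neumann eigenvalue in the sense of problem (P3) of Theorem~\ref{prop:SpectralTheorem}. By Definition~\ref{def:sg}, $\sg(\mu)$ is by definition the smallest strictly positive such eigenvalue, and since $\lambda>0$ by hypothesis, $\lambda\geq \sg(\mu)$. Combined with the previous step, $\lambda=\sg(\mu)$.

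For the (P4) version: given a non-vanishing solution $h$ with eigenvalue $\lambda>0$, I would assume without loss of generality that $h>0$ on $[a,b]$ (else replace $h$ by $-h$). Let $f$ be its primitive chosen so that $\int f\,d\mu=0$. As established in the proof of Proposition~\ref{prop:NeumannEquivDirichlet}, this $f$ is then a solution of (P3) with the same eigenvalue $\lambda$, and since $f'=h>0$, $f$ is strictly monotonic, so the first part applies. I do not expect a substantial obstacle; the only subtlety is checking that $f$ is smooth enough to be admissible in Chen's formula, which is handled by the regularity bootstrapping built into Theorem~\ref{prop:SpectralTheorem}.
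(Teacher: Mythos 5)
Your proposal matches the paper's argument step for step: Chen's formula applied to $f$ (after normalizing so $f'>0$) gives $\sg(\mu)\geq\lambda$ since $(-Lf)'/f'\equiv\lambda$; the definition of $\sg(\mu)$ as the smallest positive Neumann eigenvalue gives $\sg(\mu)\leq\lambda$; and the (P4) case is reduced to the (P3) case via the primitive normalized by $\int f\,d\mu=0$, exactly as in the proof of Proposition~\ref{prop:NeumannEquivDirichlet}. The only difference is that you flag explicitly the regularity needed to make $f$ admissible in Chen's formula, which the paper leaves implicit; this is a harmless elaboration, not a different route.
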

\begin{proof}
By definition of the spectral gap, $\sg(\mu) \leq \lambda$. Conversely, up to a change of sign we can assume that $f'>0$ on $(a,b)$. Then Chen's formula immediately gives $\sg(\mu) \geq \inf_{(a,b)} \frac{(-Lf)'}{f'} = \lambda$.\\
The last part comes from the equivalence between (P3) and (P4): If $h$ is a non vanishing solution of (P4), then the primitive function of $h$ such that $\int f d\mu = 0$ is a strictly monotonic solution of (P3) (see the proof of Prop.~\ref{prop:NeumannEquivDirichlet}).
\end{proof}

\section{Poincar\'e inequalities on intervals}  \label{sec:generalResults}

This section gathers tools for the study of the Poincar\'e constants of the restrictions of a given measures to subintervals.

\subsection{General results}
The Poincar\'e constant cannot increase by restriction to a subinterval:
\begin{lem}\label{lem:restriction}
Let $\mu$ be a probability measure on $\R$, of the form $d\mu(t)=1_\Omega(t) \pdf(t)\, dt$
where $\Omega$ is an open interval on which $\pdf$ is continuous and positive. Let $I\subset \Omega$ be an open interval, and $\mu_{|I}$ denote the probability measure obtained by restriction of $\mu$ to $I$ (defined by $\mu_{|I}(A)=\mu(I\cap A)/\mu(I)$). Then
$$\poinc(\mu_{|I})\le \poinc(\mu).$$
Consequently, the map defined on intervals by $I\mapsto \poinc(\mu_{|I})$ is non-decreasing.
\end{lem}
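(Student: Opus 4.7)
The plan is to use the variational characterization of the Poincaré constant: given a test function on $I$ with zero $\mu_{|I}$-mean, extend it to a function on $\Omega$ whose Dirichlet energy is unchanged and whose $\mu$-variance lies below the original $\int_I f^2\,d\mu_{|I}$. Then Poincaré on $\mu$ closes the loop. Monotonicity will follow from the inequality applied to a restricted measure.

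Write $I=(\alpha,\beta)\subseteq\Omega=(a,b)$ and fix $f\in\Sob{1}{\mu_{|I}}(I)$ with $\int_I f\,d\mu_{|I}=0$. First I would check that $f$ admits boundary values at any endpoint $\alpha\in\Omega$ (and similarly $\beta\in\Omega$). Since $\pdf$ is continuous and positive on $\Omega$, it is bounded below by some $m>0$ on a neighborhood of $\alpha$, so $\int_I(f')^2\pdf<\infty$ forces $f'\in L^2_{\Leb}$ locally at $\alpha$; then Cauchy–Schwarz applied to $f(t_1)-f(t_2)=\int_{t_1}^{t_2}f'$ shows $f$ is Cauchy at $\alpha^+$, so the limit $f(\alpha)$ exists. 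Define $\tilde f=f$ on $I$ and extend $\tilde f$ by the constant $f(\alpha)$ on $\Omega\cap(a,\alpha]$ and by $f(\beta)$ on $\Omega\cap[\beta,b)$ (extending on a side only if that side is nonempty in $\Omega$). Then $\tilde f$ is continuous on $\Omega$, weakly differentiable with $\tilde f'=f'\mathbf 1_I$ a.e., and belongs to $\Sob{1}{\mu}(\Omega)$.

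The Dirichlet energy is conserved:
\begin{equation*}
\int_\Omega(\tilde f')^2\,d\mu=\int_I(f')^2\pdf.
\end{equation*}
For the variance side, I use the centering. Set $c^*=\int_\Omega \tilde f\,d\mu$. Since $\int_I f\pdf=0$, one has
\begin{equation*}
\var_\mu(\tilde f)=\int_\Omega(\tilde f-c^*)^2\,d\mu\ge\int_I(f-c^*)^2\pdf=\int_I f^2\pdf+(c^*)^2\mu(I)\ge\int_I f^2\pdf.
\end{equation*}
If $\poinc(\mu)=+\infty$ there is nothing to prove; otherwise, applying the Poincaré inequality for $\mu$ to $\tilde f$ yields
\begin{equation*}
\int_I f^2\pdf\le\var_\mu(\tilde f)\le\poinc(\mu)\int_I(f')^2\pdf,
\end{equation*}
and dividing by $\mu(I)$ gives $\int_I f^2\,d\mu_{|I}\le\poinc(\mu)\int_I(f')^2\,d\mu_{|I}$, hence $\poinc(\mu_{|I})\le\poinc(\mu)$.

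For the final monotonicity claim, I would apply the main inequality with $\mu$ replaced by $\mu_{|I_2}$ and $I$ by $I_1\subset I_2$, using the elementary identity $(\mu_{|I_2})_{|I_1}=\mu_{|I_1}$. The only non-cosmetic obstacle is Step 1, existence of boundary values at endpoints of $I$ interior to $\Omega$; everything else is bookkeeping once one notices that centering $f$ against $\mu_{|I}$ makes the cross-term in the expansion of $\int_I(f-c^*)^2\pdf$ vanish, so the extension automatically pays no price in variance.
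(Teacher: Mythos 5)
Your proposal is correct and follows essentially the same approach as the paper: extend $f$ by constants past the endpoints of $I$ (which requires the continuity and positivity of $\pdf$ near those endpoints to guarantee finite boundary values), observe that the Dirichlet energy is unchanged, note that the variance can only grow, and apply Poincar\'e for $\mu$. The only cosmetic difference is that the paper avoids picking the specific center $c^*$ by invoking the variational identity $\var_\mu(\tilde f)=\inf_a\int_\Omega(\tilde f-a)^2\,d\mu\geq \inf_a\int_I(f-a)^2\,d\mu=\mu(I)\,\var_{\mu_{|I}}(f)$, whereas you expand $\int_I(f-c^*)^2\pdf$ and use the vanishing cross term from the $\mu_{|I}$-centering of $f$; both routes give the same bound $\var_\mu(\tilde f)\ge\mu(I)\,\var_{\mu_{|I}}(f)$.
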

\begin{proof}
Let $f\in \mathcal H^1(\mu_{|I})$. Set $\Omega=(\alpha, \beta)$ and $I=(\alpha',\beta')$. If $\alpha<\alpha'$  then by  hypothesis, $\pdf$ is bounded from above and below in a neighborhood of $\alpha'$, which ensures that $f$ has a limit at $\alpha'$, and can be continuously extended by a constant on $(\alpha,\alpha']$. The same applies if $\beta'<\beta$. This allows to build a weakly differentiable function $\tilde{f}$ on $\Omega$ which concides with $f$ on $I$ and has zero derivative outside $I$. The result easily follows:
\begin{eqnarray*}
\var_{\mu_{|I}}(f)&=&\inf_a \int_I (f-a)^2 \frac{d\mu}{\mu(I)} 
\le  \inf_a \int_\Omega (\tilde f-a)^2 \frac{d\mu}{\mu(I)} \\
&\le & \poinc(\mu) \int_\Omega (\tilde f')^2 \frac{d\mu}{\mu(I)}
= \poinc(\mu) \int_I ( f')^2 \frac{d\mu}{\mu(I)}\\
&=&\poinc(\mu) \int_I ( f')^2 d\mu_{|I}.
\end{eqnarray*}
\end{proof}

The previous result helps proving a continuity type property with respect to the support:
\begin{lem} \label{lem:continuitySupport}
Consider again the framework of Lemma \ref{lem:restriction}, and let $I_\epsilon$ be a family and increasing subintervals such that $I_\epsilon \uparrow \Omega$ when $\epsilon \rightarrow 0$. Typically, $I_\epsilon = (a + \epsilon, b- \epsilon)$ for $\Omega = (a,b)$, $I_\epsilon = (a, a + 1/\epsilon)$ for $\Omega = (a, +\infty)$ and so on.
Then the Poincar\'e constant on $\Omega$ is the limit of the Poincar\'e constant on subintervals:
$$ \poinc(\mu) = \underset{\epsilon \rightarrow 0}{\lim} \, \poinc(\mu_{\vert I_\epsilon})$$
\end{lem}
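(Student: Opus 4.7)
The plan has two halves. The easy half is immediate from the monotonicity established in Lemma~\ref{lem:restriction}: for $\epsilon' < \epsilon$ we have $I_{\epsilon'} \supset I_\epsilon$, hence $\poinc(\mu_{|I_\epsilon}) \le \poinc(\mu_{|I_{\epsilon'}}) \le \poinc(\mu)$. In particular the monotone limit $L := \lim_{\epsilon\to 0} \poinc(\mu_{|I_\epsilon})$ exists in $[0,+\infty]$ and satisfies $L \le \poinc(\mu)$. Everything then reduces to showing the reverse inequality $\poinc(\mu) \le L$.

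For that, I would assume $L < +\infty$ (otherwise there is nothing to prove) and show that the Poincar\'e inequality on $\Omega$ holds with constant $L$. Fix an arbitrary $f \in \Sob{1}{\mu}(\Omega)$. The restriction $f_{|I_\epsilon}$ lies in $\Sob{1}{\mu_{|I_\epsilon}}(I_\epsilon)$, since $\int_{I_\epsilon} f^2 \,d\mu \le \int_\Omega f^2 \,d\mu < \infty$ and likewise for $(f')^2$. Applying the Poincar\'e inequality on $I_\epsilon$, with $m_\epsilon := \mu(I_\epsilon)^{-1}\int_{I_\epsilon} f \,d\mu$, gives
$$ \frac{1}{\mu(I_\epsilon)}\int_{I_\epsilon}(f-m_\epsilon)^2 \,d\mu \;\le\; \poinc(\mu_{|I_\epsilon})\, \frac{1}{\mu(I_\epsilon)}\int_{I_\epsilon}(f')^2 \,d\mu. $$
Multiplying through by $\mu(I_\epsilon)$, the inequality becomes $\int_{I_\epsilon}(f-m_\epsilon)^2 \,d\mu \le \poinc(\mu_{|I_\epsilon}) \int_{I_\epsilon}(f')^2 \,d\mu$.

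The remaining step is to pass to the limit $\epsilon \to 0$. Since $\mu$ is a probability measure and $|f|, f^2, (f')^2 \in L^1(\mu)$ (the first because $L^2(\mu) \subset L^1(\mu)$), the dominated convergence theorem applied with dominants $|f|$, $f^2$ and $(f')^2$ yields $\mu(I_\epsilon)\to 1$, $m_\epsilon \to \int f\,d\mu$, and $\int_{I_\epsilon} (f')^2 \,d\mu \to \int_\Omega (f')^2 \,d\mu$. Because $\poinc(\mu_{|I_\epsilon}) \to L$, we obtain
$$ \int_\Omega \Bigl(f-\int f\,d\mu\Bigr)^2 d\mu \;\le\; L \int_\Omega (f')^2 \,d\mu, $$
which is $\var_\mu(f) \le L \int (f')^2 \,d\mu$. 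Since $f \in \Sob{1}{\mu}(\Omega)$ was arbitrary, $\poinc(\mu)\le L$, and combined with the first half this gives equality.

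I do not expect a genuine analytic obstacle: the whole argument is a monotone/dominated convergence bookkeeping on top of Lemma~\ref{lem:restriction}. The only delicate point is keeping track of the $\mu(I_\epsilon)^{-1}$ normalizations in the definition of $\mu_{|I_\epsilon}$ when comparing variances, but since $\mu(I_\epsilon)\to 1$ these factors disappear cleanly in the limit. No continuity or regularity hypothesis on $\pdf$ beyond what is already invoked in Lemma~\ref{lem:restriction} is needed.
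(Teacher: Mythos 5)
Your proposal is correct and follows essentially the same route as the paper: the easy inequality via the monotonicity of Lemma~\ref{lem:restriction}, and the reverse inequality by applying the Poincar\'e inequality on each $I_\epsilon$ and passing to the limit using dominated convergence on $\mu(I_\epsilon)$, $\int_{I_\epsilon} f\,d\mu$, $\int_{I_\epsilon} f^2\,d\mu$ and $\int_{I_\epsilon} (f')^2\,d\mu$. The paper phrases the limit step as convergence of $\mathbb{E}_{\mu_{|I_\epsilon}}(g)$ to $\mathbb{E}_\mu(g)$ for $\mu$-integrable $g$, which is the same bookkeeping you carry out explicitly.
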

\begin{proof}
Firstly, by Lemma~\ref{lem:restriction}, the sequence $\poinc(\mu_{\vert I_\epsilon})$ is increasing and bounded by  $\poinc(\mu)$. 
Hence $ \poinc(\mu) \geq \underset{\epsilon > 0}{\sup} \, \poinc(\mu_{\vert I_\epsilon}) = \underset{\epsilon \rightarrow 0}{\lim} \, \poinc(\mu_{\vert I_\epsilon})$.\\
Conversely, let $f \in \mathcal{H}^1_{\mu}(\Omega)$.
Observe that, by Lebesgue theorem, 
$$\mu(I_\epsilon) = \int_{I_\epsilon} \pdf(t)dt \underset{\epsilon \rightarrow 0}{\rightarrow} \mu(\Omega) = 1$$
and for all $\mu$-integrable function $g$:
$$\mathbb E_\mu(g) = \int_\Omega g(t)\pdf(t)dt 
= \underset{\epsilon \rightarrow 0}{\lim} \, \int_{I_\epsilon} g(t)\pdf(t)dt 
= \underset{\epsilon \rightarrow 0}{\lim} \, \int_{I_\epsilon} g(t)\frac{\pdf(t)}{\mu(I_\epsilon)}dt 
= \underset{\epsilon \rightarrow 0}{\lim} \, \mathbb E_{\mu_{\vert I_\epsilon}}(g). $$
Applying this result to $f$, $f^2$ and $f'^2$, it holds:
$$\frac{\var_\mu (f)}{\int_\Omega f'^2 d\mu} 
= \underset{\epsilon \rightarrow 0}{\lim} \, \frac{\var_{\mu_{\vert I_\epsilon}} (f)}{\int_{I_\epsilon} f'^2 d \mu_{\vert I_\epsilon}} 
\leq \underset{\epsilon \rightarrow 0}{\lim} \, \poinc(\mu_{\vert I_\epsilon})
$$
Since it is true for all $f \in \mathcal{H}^1_{\mu}(\Omega)$, we get 
$ \poinc(\mu) \leq \underset{\epsilon \rightarrow 0}{\lim} \, \poinc(\mu_{\vert I_\epsilon})$
\end{proof}


\subsection{Improvements for symmetric settings}
 In general, the Poincar\'e constant of a restriction  to an interval is at most the one of the initial measure. A better result is available in symmetric settings:
\begin{lem} \label{lem:TransportSymmetricInterval}
 Let $\mu$ be a probability measure on $\R$. Assume $\mu(dx)=\pdf_\mu(x)dx$ where $\pdf_\mu$ is a unimodal function with a maximum at $x=0$.
 Let $I\subset \R$ be a non-empty interval and $\nu:= \mu(\cdot\cap I)/\mu(I)$. Assume in addition that $\mu(\R^-)=\nu(\R^-)$.
 Then the monotonic transport from $\mu$ to $\nu$ is $\mu(I)$-Lipschitz. Consequently
   $$\poinc(\nu)\le \mu(I)^2 \poinc(\mu).$$
\end{lem}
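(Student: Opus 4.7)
The plan is to apply the Lipschitz transportation principle (Lemma~\ref{lem:lipshitzTransport}) to the monotone transport $T := F_\nu^{-1} \circ F_\mu$, which pushes $\mu$ forward onto $\nu$, and to show that $\|T\|_{\mathrm{Lip}} \leq \mu(I)$. Since $\nu$ has density $\pdf_\nu = \mathbf{1}_I \pdf_\mu / \mu(I)$, the standard formula for the derivative of the monotone transport yields
$$T'(x) = \frac{\pdf_\mu(x)}{\pdf_\nu(T(x))} = \mu(I)\,\frac{\pdf_\mu(x)}{\pdf_\mu(T(x))},$$
so the target inequality $T'(x) \leq \mu(I)$ reduces to showing $\pdf_\mu(T(x)) \geq \pdf_\mu(x)$ almost everywhere.

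The heart of the argument is therefore to prove that $T(x)$ and $x$ always have the same sign and that $|T(x)| \leq |x|$. By monotonicity of $F_\nu$, these two properties are equivalent to comparisons between $F_\mu$ and $F_\nu$. I would introduce
$$\varphi(x) := \mu\bigl(I \cap (-\infty, x]\bigr) - \mu(I)\, F_\mu(x) = \mu(I)\bigl(F_\nu(x) - F_\mu(x)\bigr),$$
and exploit that $\varphi'(x) = \bigl(\mathbf{1}_I(x) - \mu(I)\bigr)\pdf_\mu(x)$, so that $\varphi$ is non-increasing on $\R \setminus I$, non-decreasing on $I$, and satisfies $\varphi(\pm\infty) = 0$. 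The hypothesis $\mu(\R^-) = \nu(\R^-)$ translates exactly into $\varphi(0) = 0$. Putting these facts together forces $\varphi \leq 0$ on $(-\infty,0]$ and $\varphi \geq 0$ on $[0,+\infty)$, which via the equivalences above yields $0 \leq T(x) \leq x$ for $x \geq 0$ and $x \leq T(x) \leq 0$ for $x \leq 0$.

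The unimodality of $\pdf_\mu$ with maximum at $0$ --- equivalent to $\pdf_\mu$ being non-decreasing on $\R^-$ and non-increasing on $\R^+$ --- then gives $\pdf_\mu(T(x)) \geq \pdf_\mu(x)$ a.e., hence $T'(x) \leq \mu(I)$ and $\|T\|_{\mathrm{Lip}} \leq \mu(I)$. The bound $\poinc(\nu) \leq \mu(I)^2\, \poinc(\mu)$ then follows directly from Lemma~\ref{lem:lipshitzTransport}. The main technical nuisance I anticipate is the degenerate case where $0 \notin I$: there the sign analysis of $\varphi$ collapses since $\varphi$ has constant sign on a whole half-line, but in that configuration the assumption $\mu(\R^-) = \nu(\R^-)$ forces $\mu$ to be supported on the same half-line as $I$, so the required sign property of $T$ holds automatically and one is reduced to the non-degenerate monotonicity argument on that half-line.
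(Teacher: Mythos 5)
Your proof follows essentially the same route as the paper's: both invoke the Lipschitz transport principle (Lemma~\ref{lem:lipshitzTransport}), reduce the Lipschitz bound on $T$ to showing $\pdf_\mu(T(x))\ge\pdf_\mu(x)$, and derive this from the two facts that $T(x)$ has the same sign as $x$ and that $|T(x)|\le|x|$, combined with the monotonicity of $\pdf_\mu$ on each half-line. Your auxiliary function $\varphi=\mu(I)(F_\nu-F_\mu)$ together with the sign analysis of $\varphi'=(\mathbf 1_I-\mu(I))\pdf_\mu$ is a tidy way to organize the CDF comparison, but it establishes exactly the inequalities $F_\nu\ge F_\mu$ on $\R^+$ and $F_\nu\le F_\mu$ on $\R^-$ that the paper obtains by direct manipulation of the integrals, so this is a presentational rather than a substantive difference.

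Your closing remark about the case $0\notin I$ is, however, not correct as written. If $I\subset(0,+\infty)$ with $\alpha:=\inf I>0$, then $\mu(\R^-)=\nu(\R^-)=0$ does force $\mu$ to live on $\R^+$, but nothing forces $\mu$ to put zero mass on $(0,\alpha)$, and if it does put mass there, then $F_\mu>F_\nu=0$ on $(0,\alpha)$, so $T(x)>x$ and the crucial estimate $|T(x)|\le|x|$ fails. Concretely, take $\pdf_\mu(x)=2(1-x)\mathbf 1_{[0,1]}(x)$ (non-decreasing on $\R^-$, non-increasing on $\R^+$, max at $0$) and $I=[1/2,1]$: then $\mu(\R^-)=\nu(\R^-)=0$, $\mu(I)=1/4$, but $T(x)=(1+x)/2$ so that $\|T\|_{\mathrm{Lip}}=1/2>\mu(I)$, and indeed $\poinc(\nu)=\tfrac14\poinc(\mu)>\mu(I)^2\poinc(\mu)=\tfrac1{16}\poinc(\mu)$. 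This is not a defect unique to your write-up: the paper's own computation writes $F_\nu(x)=\nu(\R^-)+\int_0^x\pdf_\mu/\mu(I)$, which silently assumes $[0,x]\subset I$, i.e. $\inf I\le 0\le\sup I$. The clean fix is to make that hypothesis explicit in the statement rather than to try to dispense with it by a side remark.
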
   
\begin{proof}
By hypothesis, $F_\nu(0)=F_\mu(0)$. Hence $T(0)=0$ and since $T$ is increasing, $T(x) $ and $x$ have the same sign.

We claim that for all $x$ (in the interior of the support of $\mu$), $|T(x)| \le |x|$. Indeed
for $x\ge 0$, this inequality amount to $F_\mu(x)\le F_\nu(x)$. The latter is obvious 
when $x> \sup(I)$ since in this case $F_\nu(x)=1$. For $0\le x\le \sup(I)$, the claim boils
down to 
 $$\mu(\R^-)+\int_0^x \pdf_\mu \le \nu(\R^-)+\int_0^x \frac{\pdf_\mu}{\mu(I)},$$
 which is obvious using $\mu(\R^-)=\nu(\R^-)$.

For $x\le 0$, the claim amounts to $x\le T(x)$, that is $F_\nu(x)\le F_\mu(x)$. This is obvious
if $x<\inf(I)$. If $\inf(I)\le x\le 0$, the inequality amounts to
  $$\nu(\R^-)-\int_x^0 \frac{\pdf_\mu}{\mu(I)} \le \nu(\R^-)-\int_x^0 \pdf_\mu,$$
  which is easily veryfied.

Eventually, for $x$ in the interior of the support of $\mu$, 
$$T'(x)=\frac{\pdf_\mu(x)}{\pdf_\nu(T(x))}=\mu(I) \frac{\pdf_\mu(x)}{\pdf_\mu(T(x))} \le \mu(I),$$
since on $\R^+$, $0\le T(x) \le x$ and $\pdf_\mu$ is non-increasing, and on
$\R^-$, $x \le T(x)\le 0$ and $\pdf_\mu$ is non-decreasing.
\end{proof}


Our goal here is to get finer results when the underlying measure is even. A first natural observation 
is that extremal (or almost extremal) functions for the Poincar\'e inequality can be found among odd functions. 

\begin{lem}\label{lem:symmetry-quotient}
Let $\mu$ be an even probability measure on $\mathbb R$. Assume that it is supported on a symmetric interval, and is absolutely continuous, with a positive continuous density in the interior of this interval.
Then for any non constant function $f\in \mathcal H^1(\mu)$, there exists an odd  function
$g \in \mathcal H^1(\mu)$ with 
$$\frac{\int (f')^2 d\mu}{\var_\mu(f)} \ge \frac{\int (g')^2 d\mu}{\var_\mu(g)}\cdot$$ 
\end{lem}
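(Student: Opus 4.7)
The plan is to split $f$ into its even and odd parts and use the symmetry of $\mu$ to reduce to one of them. Writing $f = f_e + f_o$ with $f_e(x) = \tfrac{1}{2}(f(x)+f(-x))$ and $f_o(x) = \tfrac{1}{2}(f(x)-f(-x))$, both pieces lie in $\mathcal H^1(\mu)$ because $x \mapsto -x$ preserves $\mu$. Evenness of $\mu$, together with the observation that $f_e'$ is odd and $f_o'$ is even, will give the orthogonality relations $\int f_e f_o\,d\mu = 0$, $\int f_e' f_o'\,d\mu = 0$ and $\int f_o\,d\mu = 0$. Consequently both the Dirichlet energy and the variance decompose additively:
$$\int (f')^2\,d\mu = A_e + A_o, \qquad \var_\mu(f) = B_e + B_o,$$
with the notations $A_e = \int (f_e')^2\,d\mu$, $B_e = \var_\mu(f_e)$, and analogously $A_o = \int (f_o')^2\,d\mu$, $B_o = \int f_o^2\,d\mu$.

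Since $f$ is non-constant, not both $B_e$ and $B_o$ vanish. When both are positive, the Rayleigh ratio of $f$ is the mediant $(A_e + A_o)/(B_e + B_o)$ and is therefore at least $\min(A_e/B_e,\, A_o/B_o)$. This leads to a dichotomy. If $A_o/B_o$ is the smaller ratio, or if $B_e = 0$, I would simply set $g := f_o$: it is odd, non-constant, and satisfies $J(g) = A_o/B_o \le J(f)$.

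The interesting case is when $A_e/B_e \le A_o/B_o$ (or $B_o = 0$, which forces $f = f_e$): there I must manufacture an odd competitor from the even function $f_e$. My candidate is the antisymmetric reflection
$$g(x) := \sgn(x)\,\bigl(f_e(|x|) - f_e(0)\bigr),$$
which is odd, vanishes at the origin, and admits $\sgn(x)\,f_e'(x)$ as weak derivative almost everywhere, so that squaring kills the sign and $\int (g')^2\,d\mu = A_e$. For the denominator, evenness of $\mu$ and of $(f_e - f_e(0))^2$ give
$$\int g^2\,d\mu \;=\; \int (f_e - f_e(0))^2\,d\mu \;\ge\; \inf_{a \in \mathbb R}\int (f_e - a)^2\,d\mu \;=\; B_e,$$
which is just the defining infimum of the variance. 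Combining these two estimates yields $J(g) \le A_e/B_e \le J(f)$, as required.

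The main obstacle I anticipate is the verification that the reflected function $g$ truly belongs to $\mathcal H^1(\mu)$ — concretely, that $g$ is weakly differentiable across the origin, with no hidden Dirac contribution in $g'$ at $0$. The key input here is that a weakly differentiable function admits a continuous representative (Remark~\ref{rem:caracWeekly}), so that $f_e(0)$ is unambiguously defined and the two halves of $g$ glue continuously; an integration by parts against an arbitrary compactly supported test function around $0$ then confirms the stated formula for $g'$. Everything else reduces to routine manipulations of even/odd parts.
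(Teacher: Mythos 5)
Your proof is correct, but it takes a genuinely different route from the paper. The paper never decomposes $f$ into even and odd parts. Instead, it uses the variational inequality $\var_\mu(f)\le\int(f-f(0))^2\,d\mu$, splits the resulting quotient over the two half-intervals $(-b,0)$ and $(0,b)$, applies the mediant inequality $\frac{u+v}{x+y}\ge\min\bigl(\frac{u}{x},\frac{v}{y}\bigr)$, and then constructs two odd competitors $g,h$ by anti-symmetrically reflecting the restriction of $f-f(0)$ to each half-interval. Your approach decomposes $f=f_e+f_o$, derives orthogonality of the two parts (in both the $L^2$ and Dirichlet inner products) from the parity of $\mu$, and then applies the mediant inequality to the resulting additive decomposition $J(f)=(A_e+A_o)/(B_e+B_o)$; your odd competitor is either $f_o$ directly or the reflection $\sgn(x)\,(f_e(x)-f_e(0))$. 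In fact the two proofs are linked: your two candidates are, up to a factor $\tfrac12$, the sum $g+h$ and difference $g-h$ of the paper's candidates. Each approach has a small advantage. The paper's is more parsimonious -- it needs only the variational characterization of the variance and the mediant inequality, no orthogonality computations -- and it treats both candidates symmetrically in a single stroke. Yours is more structural: it shows explicitly how the Dirichlet form and the variance split along even/odd parts, and in the case where the odd part already dominates, no reflection is needed at all. Both proofs share the same delicate point, namely that the anti-symmetric reflection is weakly differentiable across the origin (no hidden singular contribution at $0$); you flag this explicitly and correctly resolve it via the continuous representative of an $\mathcal H^1$ function, whereas the paper leaves it implicit. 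Your handling of the degenerate cases $B_e=0$ and $B_o=0$ is also complete: $B_e=0$ forces $f_e$ constant and hence $A_e=0$, so $J(f)=J(f_o)$, and symmetrically for $B_o=0$.
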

\begin{proof}
Let $(-b,b)$ denote the interior of the support of $\mu$. By hypothesis, $f(0)$ is well defined and by the variational definition of the variance 
$$\frac{\int (f')^2 d\mu}{\var_\mu(f)} \ge \frac{\int (f')^2 d\mu}{\int (f-f(0))^2d\mu}
= \frac{\int_{-b}^0 (f')^2 d\mu+\int_0^b (f')^2 d\mu}{\int_{-b}^0 (f-f(0))^2d\mu+\int_0^b (f-f(0))^2d\mu }.$$
Using the inequality $\frac{u+v}{x+y}\ge \min(\frac{u}{x},\frac{v}{y})$, which is valid for non-negative numbers $u,v,x,y$ such that $x+y>0$ (and with the convention that $v/0=+\infty$),
we get that 
$$\frac{\int (f')^2 d\mu}{\var_\mu(f)} \ge \min \left(\frac{\int_{-b}^0 (f')^2 d\mu}{\int_{-b}^0 (f-f(0))^2d\mu},
\frac{\int_0^b (f')^2 d\mu}{\int_0^b (f-f(0))^2d\mu }  \right).$$
In order to conclude, we consider the odd functions $g,h$ defined on $(-b,b)$, such that for all $x\in[0,b)$, $g(x)=f(x)-f(0)$, and for all $x\in (-b,0]$, $h(x)= f(x)-f(0)$. By symmetry of $\mu$,
 $$\frac{\int_{-b}^0 (f')^2 d\mu}{\int_{-b}^0 (f-f(0))^2d\mu}= \frac{\int (h')^2 d\mu}{\var_\mu(h)}, \quad \mathrm{and} \quad \frac{\int_0^b (f')^2 d\mu}{\int_0^b (f-f(0))^2d\mu}=\frac{\int (g')^2 d\mu}{\var_\mu(g)}  \cdot$$  
\end{proof}

Next, we derive from the above proposition a perturbation principle for the Neumann spectral gap. 
It should be compared with the  "bounded perturbation principle" that we recalled in the previous section.

\begin{prop} \label{prop:Neumann-perturb-paire}
Let $d\mu(x)=\mathbf 1_{(-b,b)} e^{-V(x)}dx$ be an even probability measure, with continuous potential $V$. Let $\varphi:(-b,b)\to \mathbb R^+$ be an even function. Assume that $\varphi$ is non-increasing on $[0,b)$ and that $\tilde{\mu}$
defined by $d\tilde{\mu}=\varphi \, d\mu$ is a probability measure. Then
$$ \poinc(\tilde{\mu}) \le \poinc(\mu).$$
\end{prop}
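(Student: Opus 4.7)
The plan is to combine two ingredients already established in the paper: the reduction to odd test functions given by Lemma~\ref{lem:symmetry-quotient}, and the monotonicity of the Poincar\'e constant under restriction to sub-intervals from Lemma~\ref{lem:restriction}. The even, non-increasing structure of $\varphi$ will let me slice $\tilde\mu$ into restrictions of $\mu$ to symmetric sub-intervals via a layer-cake decomposition.

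First I would verify that $\tilde\mu$ fits the setting of Lemma~\ref{lem:symmetry-quotient}: since $\varphi$ and $e^{-V}$ are both even, $\tilde\mu$ is an even probability measure supported on the symmetric interval $(-b,b)$ (if $\varphi$ vanishes on some $[c,b)$ I first restrict the whole argument to the symmetric open interval $\{\varphi>0\}$, which is itself of the required form). It is therefore enough to prove the Poincar\'e inequality for $\tilde\mu$ with constant $\poinc(\mu)$ on odd functions $g\in\mathcal{H}^1(\tilde\mu)$; for such $g$, evenness of $\tilde\mu$ yields $\int g\,d\tilde\mu=0$, hence $\var_{\tilde\mu}(g)=\int g^2 d\tilde\mu$.

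Next comes the core step. Because $\varphi$ is even and non-increasing on $[0,b)$, each super-level set $\{\varphi>s\}$ is a symmetric interval $(-a_s,a_s)$ with $a_s\in[0,b]$, and the layer-cake formula $\varphi(x)=\int_0^\infty\mathbf 1_{\{\varphi>s\}}(x)\,ds$ together with Fubini gives
\begin{equation*}
\int g^2\,d\tilde\mu=\int_0^\infty\!\!\left(\int_{-a_s}^{a_s} g^2\,d\mu\right)\!ds,\qquad \int (g')^2\,d\tilde\mu=\int_0^\infty\!\!\left(\int_{-a_s}^{a_s} (g')^2\,d\mu\right)\!ds.
\end{equation*}
Now, for every $s$ such that $a_s>0$, Lemma~\ref{lem:restriction} gives $\poinc(\mu_{|(-a_s,a_s)})\le \poinc(\mu)$; since $g$ is odd and $\mu$ is even, $g$ is centered under $\mu_{|(-a_s,a_s)}$, so clearing the normalising factor $\mu((-a_s,a_s))$ yields the unnormalised inequality
\begin{equation*}
\int_{-a_s}^{a_s} g^2\,d\mu\le \poinc(\mu)\int_{-a_s}^{a_s} (g')^2\,d\mu,
\end{equation*}
which also holds trivially when $a_s=0$. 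Integrating in $s$ against $ds$ and using the two Fubini identities above gives $\int g^2\,d\tilde\mu\le \poinc(\mu)\int(g')^2\,d\tilde\mu$, and combining with the reduction of Lemma~\ref{lem:symmetry-quotient} concludes the proof.

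The expected main obstacle is bookkeeping rather than mathematics: checking that $\tilde\mu$ really satisfies the mild regularity hypothesis of Lemma~\ref{lem:symmetry-quotient} (so that $g(0)$ is well-defined for the odd-function reduction) even though $\varphi$ is only assumed monotone, not continuous, and handling the possibility that $\varphi$ vanishes on part of $(-b,b)$. Both issues are readily dealt with, either by restricting to the symmetric open interval $\{\varphi>0\}$ and invoking the Sobolev embedding in one dimension (continuous representatives of $\mathcal H^1$ functions), or by approximating $\varphi$ from below by strictly positive, continuous, even, non-increasing functions and passing to the limit at the end.
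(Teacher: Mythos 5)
Your proof is correct and takes essentially the same route as the paper: reduce to odd test functions via Lemma~\ref{lem:symmetry-quotient}, slice $\varphi$ into symmetric super-level sets via the layer-cake formula, apply Lemma~\ref{lem:restriction} (with the observation that odd functions are automatically centered against the even restrictions $\mu_{|(-a_s,a_s)}$) on each slice, and integrate in the level parameter. Your closing remarks on the regularity/positivity hypotheses of Lemma~\ref{lem:symmetry-quotient} are a point the paper glosses over, but the core argument is identical.
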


\begin{proof}
By the previous lemma, it is enough to establish a Poincar\'e inequality for $\tilde \mu$ for odd functions only. Let $f$ be an odd weakly differentiable function.  By Lemma \ref{lem:restriction},
for any $a\in (0,b)$, $\poinc(\mu_{|(-a,a)})\le \poinc(\mu)$. Since $f$ is odd and $\mu$ is even,
$\int_{(-a,a)} f\,d\mu=0$, and the inequality $\var_{\mu_{|(-a,a)}}(f)\le \poinc(\mu) \int (f')^2d\mu_{|(-a,a)}$ can be rewritten as 
$$\int f^2 1_{(-a,a)}d\mu \le \poinc(\mu) \int (f')^2 1_{(-a,a)} d\mu.$$
Next, for all $x\in (-b,b)$,
$$\varphi(x)=\int_0^{+\infty} 1_{t\le \varphi(x)} \,dt= \int_0^{+\infty} 1_{I_t}(x) \, dt,$$ 
where $I_t=\varphi^{-1}\big([t,+\infty)\big)$. Our hypotheses on $\varphi$ ensure that $I_t$ 
is empty, or is a symmetric interval included in $(-b,b)$. Hence, if it is not empty, it is equal, up to a negligible set to $(-a,a)$ for some $a$. Therefore we know that 
$$\int f^2 1_{I_t}d\mu \le \poinc(\mu) \int (f')^2 1_{I_t} d\mu,$$
is true for all $t\ge 0$. 
But, by definition, pointwise, $\varphi=\int_{\mathbb R^+} 1_{I_t}$. Thus integrating the latter inequality over $t$ yields 
$$\int f^2 \varphi \, d\mu \le \poinc(\mu) \int (f')^2\varphi \,  d\mu.$$
This can be rewritten as $\var_{\tilde \mu} f\le \poinc(\mu) \int (f')^2 d\tilde\mu.$
\end{proof}

As an example of application, using $V \equiv 0$ and $\phi(x)=e^{-x^2/2}$, one can bound the Poincar\'e constant of the Gaussian measure restricted to $[-b,b]$ by the one of the uniform measure on that interval:
$\poinc \left( \mathcal{N}(0,1)\vert [-b, b] \right) \leq \frac{4b^2}{\pi^2}.$

\section{Exact values of Poincar\'e constants} \label{sec:optimalConstants}
In this section, we show how to derive a Poincar\'e constant on an interval $\Omega$ either as a first zero of a function expressed analytically (semi-analytical approach), or by a full numerical method based on finite elements. We consider the setting of the spectral theorem (Thm. \ref{prop:SpectralTheorem}): $\Omega$ bounded and $\pdf > 0$ on $\overline{\Omega}$ (with $\pdf$ continuous on $\overline{\Omega}$ and piecewise $C^1$ on $\Omega$). The general case can be derived from it by the continuity type property of Lemma~\ref{lem:continuitySupport}. Hence, if $\Omega$ is unbounded or $\pdf$ vanishes at the boundary of $\Omega$, then the Poincar\'e constant is the limit of Poincar\'e constants on bounded subintervals on which $\pdf > 0$.

\subsection{Semi-analytical results}

This section presents some examples where the spectral gap is obtained in a closed-form expression or by numerically searching a zero of a function expressed analytically.
They are summarized in Table~\ref{table:exact_optimal_constant}.
The most famous one is about the uniform distribution on $[a, b]$, corresponding to a spectral problem for the Laplacian operator, $f'' = - \lambda f$. 
Another well-known result is about the truncated normal, for the intervals formed by consecutive zeros of Hermite polynomials. After recalling this result, we show how to extend it for a general interval 
and develop the case of truncated (double) exponential and triangular distributions.
Without loss of generality, we will consider only one value for the parameters, 
remarking that the general result is obtained by rescaling (Lemma \ref{lem:lipshitzTransport} applied to $T$ and $T^{-1}$ when $T$ is an affine function).\\

\begin{table}[h!] 
\begin{center}
\begin{tabular}{|c|c|c|c|}
\hline 
\textbf{Prob. Dist. $\mu$} & \textbf{Support} & \textbf{$\poinc(\mu)$} & \textbf{Form of $f_{\textrm{opt}}(x)$} \\ 
\hline 
Uniform & $\left[ -\frac12, \frac12 \right]$ & $1 / \pi^2$ & $\sin (\pi x)$ \\ 
\hline 
$\mathcal{N}(0, 1)$ & $\mathbb{R}$ & 1 & $x$ \\ 
\hline 
 & $[r_{n,i}, r_{n, i+1}]$ & $1/{(n+1)}$ & $H_{n+1}(x)$ \\ 
\hline
& \emph{$[a,b]$} & see Prop.~\ref{prop:SpectralGapTruncatedNormal} & 
related to Kummer's functions \\
\hline
Double exp.& $\mathbb{R}$ & 4 & $\times$ \\ 
\hline 
 & \emph{$[a,b] \subseteq \mathbbm{R}^+$}& \emph{$\left( \frac14 + \left(\frac \pi {b-a} \right)^2 \right)^{-1}$} & 
$e^{x/2}\cos \left( \frac \pi {b-a}  x + \phi \right)$ \\ \hline
 & General case &  See Prop.~\ref{prop:trunc_exp} &  \\ \hline
Triangular & \emph{$[-1,1]$} & \emph{$r_1^{-2} \approx 0.1729$} & 
$\textrm{sign}(x) J_0 \left( r_1 (1 - \vert x \vert) \right)$ \\ 
\hline 
\end{tabular}
\end{center}
\caption{Examples of analytical results of the spectral problem with Neumann conditions. 
We have used the following notations:
$H_n$ is the Hermite polynomial of degree $n$ and $r_{n,i}$ its $i^{\textrm{th}}$ zero,
$J_0$ is the Bessel function of the first kind with $\nu=0$, and $r_1$ is its first root.}
\label{table:exact_optimal_constant}
\end{table}

In all these examples, solutions of the second-order differential equation can be expressed analytically 
as a linear combination of odd and even known functions. The whole spectrum then corresponds to the zeros of one determinant obtained with boundary conditions, as detailed in the following proposition.

\begin{prop} \label{prop:exact_spectral_gap}
Let $f_\lambda, g_\lambda$ a basis of solutions of the second-order differential equation $u'' - V' u' = -\lambda u$. 
Then the spectral gap on $[a, b]$ is the first zero of 
$$ d(\lambda) = \det 
\begin{pmatrix} f'_\lambda(a) & g'_\lambda(a) \\ f'_\lambda(b) & g'_\lambda(b)\end{pmatrix}
= f'_\lambda(a)g'_\lambda(b) - f'_\lambda(b)g'_\lambda(a)$$
Furthermore if $V$ is even and  $a = -b$, the spectral gap is the first zero of $\lambda \mapsto g'_\lambda(b)$.
\end{prop}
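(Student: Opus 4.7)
The plan is to reduce the Neumann eigenvalue problem (P3) to a two-dimensional linear algebra problem, and then exploit the parity symmetry in the even case.

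First, by Theorem~\ref{prop:SpectralTheorem}, a number $\lambda \geq 0$ is an eigenvalue if and only if the ODE $u'' - V' u' = -\lambda u$ admits a non-zero solution satisfying $u'(a) = u'(b) = 0$. I would write any such solution as $u = \alpha f_\lambda + \beta g_\lambda$, turning the two Neumann boundary conditions into a homogeneous $2 \times 2$ linear system in $(\alpha, \beta)$ whose matrix is precisely the one displayed in the statement. A non-trivial solution exists if and only if the determinant $d(\lambda)$ vanishes, so the eigenvalues are exactly the zeros of $d$. Since by Theorem~\ref{prop:SpectralTheorem} the eigenvalues form an increasing sequence $0 = \lambda_0 < \lambda_1 < \cdots$, the spectral gap $\sg(\mu) = \lambda_1$ is the first positive zero of $d$.

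In the symmetric case $a = -b$ with $V$ even, $V'$ is odd and the ODE is invariant under the reflection $x \mapsto -x$. Hence the two-dimensional solution space is stable under this involution, and by a simple dimension count (both the even and odd eigenspaces of the involution are non-zero) one may choose $f_\lambda$ even and $g_\lambda$ odd. Then $f_\lambda'$ is odd and $g_\lambda'$ is even, so
$$d(\lambda) = f_\lambda'(-b) g_\lambda'(b) - f_\lambda'(b) g_\lambda'(-b) = -2\, f_\lambda'(b)\, g_\lambda'(b).$$
The positive eigenvalues thus split into those with even eigenfunctions (zeros of $\lambda \mapsto f_\lambda'(b)$) and those with odd eigenfunctions (zeros of $\lambda \mapsto g_\lambda'(b)$).

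It remains to show that the first positive zero lies in the odd family. Here I would invoke Lemma~\ref{lem:symmetry-quotient} applied to $f = u_1$: it produces an odd $g \in \mathcal{H}^1(\mu)$ with $J(g) \leq J(u_1) = \lambda_1$. Since $\mu$ is even, any odd integrable function has zero mean, so $g$ is centered, and the variational characterization of $\lambda_1$ gives $J(g) \geq \lambda_1$, forcing equality. Hence $g$ is itself a minimizer and, by the simplicity of $\lambda_1$ (Theorem~\ref{prop:SpectralTheorem}), proportional to $u_1$; in particular $u_1$ is odd, proportional to $g_{\lambda_1}$, so the Neumann condition $u_1'(b) = 0$ reads $g_{\lambda_1}'(b) = 0$. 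That $\lambda_1$ is the \emph{first} positive zero of $\lambda \mapsto g_\lambda'(b)$ is then immediate, since any smaller positive zero would yield a positive eigenvalue strictly below the spectral gap. The main obstacle of the proof is precisely this last step: showing not merely that $d(\lambda_1) = 0$ but that the factor responsible is $g_\lambda'(b)$ rather than $f_\lambda'(b)$, which is exactly where the combination of the symmetry lemma and the simplicity of the first eigenvalue comes in.
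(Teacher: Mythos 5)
Your proof is correct and follows essentially the same route as the paper: reduce the Neumann conditions to a homogeneous $2\times 2$ system so that eigenvalues are the zeros of $d$, and in the symmetric case invoke Lemma~\ref{lem:symmetry-quotient} to restrict to odd eigenfunctions. You are in fact a bit more careful than the paper at the end, making explicit that $u_1$ is proportional to the odd basis solution (via the symmetry lemma combined with simplicity of $\lambda_1$) and that no smaller positive $\lambda$ can satisfy $g'_\lambda(b)=0$, whereas the paper leaves these two points implicit.
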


\begin{rem} \label{rem:prop:exact_spectral_gap}
This result is immediately adapted to the equivalent spectral problem with Dirichlet conditions (P4), 
$u'' - V'u' - V''u = - \lambda u$.
In that case, with similar notations, the spectral gap is the first zero of 
$$ d(\lambda) = \det 
\begin{pmatrix} f_\lambda(a) & g_\lambda(a) \\ f_\lambda(b) & g_\lambda(b)\end{pmatrix}
= f_\lambda(a)g_\lambda(b) - f_\lambda(b)g_\lambda(a)$$
and if $a = -b$ with even $V$, it is the first zero of $\lambda \mapsto f_\lambda(b)$.
\end{rem}

\begin{proof}[Proof of Proposition~\ref{prop:exact_spectral_gap}]
If the interval is symmetric and $V$ even, then by Lemma~\ref{lem:symmetry-quotient} a solution can be found among odd functions, and we can assume that $u = g_\lambda$. The Neumann conditions are then equivalent to $g'_\lambda(b) = 0$, which proves the result.\\
In the general case, let $u$ be a non-zero solution of the spectral problem. Then $u$ can be  written  as $u = A f + B g$, for some $A,B\in \mathbb R$. 
The boundary conditions lead to the linear system $Xc = 0$, 
with $c = \begin{pmatrix} A \\ B \end{pmatrix}$ and 
$ X = \begin{pmatrix} f'_\lambda(a) & g'_\lambda(a) \\ f'_\lambda(b) & g'_\lambda(b)\end{pmatrix}$.
Now there exists a non zero solution if and only if the system is degenerated, i.e. $d(\lambda) = 0$.
This gives the whole spectrum and, in particular, the spectral gap is the first zero.
\end{proof}

\subsection*{Truncated exponential distribution}

\begin{prop} \label{prop:trunc_exp}
Let $\mu$ be the (double) exponential distribution on the real line, $\mu (dt) = \frac12 e^{- \vert t \vert} dt$, and let $a < b$.
The Poincar\'e constant of the restriction of $\mu$ to $[a,b]$ is 
$\poinc(\mu_{\vert [a,b]}) =  \left(\frac 14 + \omega^2 \right)^{-1}$, where:
\begin{itemize}
\item If $a, b$ have the same sign, $\omega = \frac \pi {\vert b-a \vert}$.
The inequality is saturated by $f(x) = e^{\vert x \vert /2} \left( -\omega \cos(\omega(x-a)) \right.$ 
$+ \left. \frac12 \sin(\omega (x-a) \right)$
\item If $a < 0 < b$, then $\omega > \frac \pi {b-a}$. More precisely $\omega$ is the zero of 
$x \mapsto \cotan(\vert a \vert x) + \cotan(\vert b \vert x) + 1/x$
in $]0, \min(\pi / \vert a \vert, \pi / \vert b \vert)[$.
The inequality is saturated by 
$$f(x) = e^{\vert x \vert /2} \left( A \cos(\omega x)  
+ \left(B - \frac{\sgn(x)+1}{2} \frac A \omega \right) \sin(\omega x) \right)$$ with 
$A = \omega \cotan(\omega a) - \frac12$ and $B = \frac12 \cotan(\omega a ) + \omega$.\\
In particular if $a = -b$ then $\omega$ is the zero of $x \mapsto 2\cotan(b x)+1/x$ in $]0, \pi / b [$,
and the inequality is saturated by $f(x) = e^{\vert x \vert /2} \sin(\omega x)$.
\end{itemize}
\end{prop}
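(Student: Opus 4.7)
The plan is to apply the spectral theorem (Theorem~\ref{prop:SpectralTheorem}) and identify the spectral gap by exhibiting, in each case, a solution of the Neumann eigenvalue problem (P3) whose derivative has constant sign; Corollary~\ref{cor:sgWhenMonotoneSolution} then certifies that the associated eigenvalue $\lambda$ is the spectral gap, so that $\poinc(\mu_{\vert[a,b]}) = 1/\lambda$. The potential is $V(t) = |t|$ (up to an additive constant), continuous and piecewise $C^1$, with $V'(t) = \sgn(t)$ on $[a,b]\setminus\{0\}$. On each sub-interval of definite sign $\varepsilon = \pm 1$ the equation $f'' - \sgn(t) f' = -\lambda f$ has characteristic roots $\varepsilon/2 \pm i\omega$ whenever $\lambda = 1/4 + \omega^2 > 1/4$, so we look for solutions of the form $e^{\varepsilon t/2}\bigl(A\cos(\omega t) + B\sin(\omega t)\bigr)$ with $\omega>0$.

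When $a$ and $b$ have the same sign, (P3) is a single linear ODE on $[a,b]$ and Proposition~\ref{prop:exact_spectral_gap} applies directly. Using the shifted ansatz $f(t) = e^{\varepsilon t/2}\bigl(A\cos(\omega(t-a)) + B\sin(\omega(t-a))\bigr)$, the condition $f'(a) = 0$ gives $A/2 + \varepsilon B\omega = 0$, and $f'(b) = 0$ then reduces to $\sin(\omega(b-a)) = 0$, whose smallest positive root is $\omega = \pi/(b-a)$. Back-substitution produces the formula stated, and $f'(x)$ comes out proportional to $\sin(\omega(x-a))$, strictly signed on $(a,b)$; Corollary~\ref{cor:sgWhenMonotoneSolution} closes the case.

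The bulk of the argument is the case $a < 0 < b$, where the solution is assembled by matching two branches at the kink. Write $u(t) = e^{-t/2}(A_1\cos(\omega t) + B_1\sin(\omega t))$ on $[a,0]$ and $u(t) = e^{t/2}(A_2\cos(\omega t) + B_2\sin(\omega t))$ on $[0,b]$. Since $(f'\pdf)' = -\lambda f\pdf$ is integrable while $\pdf$ is continuous and positive at $0$, $f'$ is continuous at $0$; combined with the continuity of $f$, this yields $A_1 = A_2 =: A$ and the matching relation $B_1 - B_2 = A/\omega$. The boundary conditions $u'(a) = 0$ and $u'(b) = 0$, divided through by $\sin(\omega a)$ and $\sin(\omega b)$ (nonzero on the range we consider), express $B_1$ and $B_2$ as explicit rational functions of $A$, $\cotan(\omega a)$ and $\cotan(\omega b)$: namely $B_1 = A(2\omega + \cotan(\omega a))/(2\omega \cotan(\omega a) - 1)$ and an analogous expression for $B_2$. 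Substituting into $B_1 - B_2 = A/\omega$ and clearing denominators, a routine algebraic simplification factors the result as $(1 + 4\omega^2)\bigl[\cotan(\omega b) - \cotan(\omega a) + 1/\omega\bigr]$; since $\cotan$ is odd this is exactly $\cotan(|a|\omega) + \cotan(|b|\omega) + 1/\omega$, yielding the stated transcendental equation. Back-substituting for $B_1, B_2$ into the piecewise ansatz reconstructs the saturating function with its $\sgn(x)$ term.

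To localize the root and confirm the spectral-gap identification, use the identity $\cotan x + \cotan y = \sin(x+y)/(\sin x \sin y)$: on $(0, \pi/(b-a)]$ the sum $\cotan(|a|\omega) + \cotan(|b|\omega)$ is non-negative (since $|a|\omega + |b|\omega \le \pi$), so adding the positive $1/\omega$ keeps the whole expression strictly positive; as $\omega$ approaches $\min(\pi/|a|, \pi/|b|)$ from below, the relevant cotangent diverges to $-\infty$, so by continuity a zero exists in $\bigl(\pi/(b-a), \min(\pi/|a|,\pi/|b|)\bigr)$. For the smallest such $\omega$, one verifies directly from the explicit formulas that $u'$ has constant sign on each of $(a,0)$ and $(0,b)$ (the relevant $u_i'$ vanishes only at the boundary within the first half-period of the underlying cotangent equation), and Corollary~\ref{cor:sgWhenMonotoneSolution} concludes. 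The symmetric case $a = -b$ is then a clean specialization via the second assertion of Proposition~\ref{prop:exact_spectral_gap}: restricting to odd solutions forces $A = 0$ and $B_1 = B_2$, reducing $u'(b) = 0$ to $2\cotan(\omega b) + 1/\omega = 0$ with saturating function $e^{|x|/2}\sin(\omega x)$. The two main obstacles will be the algebraic simplification of the determinantal condition into the compact cotangent form, and verification of monotonicity across the kink; both are handled by the explicit formulas above and the appeal to Corollary~\ref{cor:sgWhenMonotoneSolution}.
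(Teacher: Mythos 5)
Your proposal is correct and the computational core (piecewise ansatz with characteristic roots $\varepsilon/2\pm i\omega$, matching $A_1=A_2$ and $B_1-B_2 = A/\omega$ at the kink from continuity of $f$ and $f'$, deriving the transcendental cotangent equation) matches the paper's proof; the paper obtains the same condition by writing the Neumann conditions as a singular $2\times 2$ system in $(A,B)$ and factoring $\det M = \omega\bigl(\omega + \tfrac{1}{4\omega}\bigr)\sin(\omega a)\sin(\omega b)\bigl(\cotan(\omega|a|)+\cotan(\omega|b|)+1/\omega\bigr)$, which agrees with your factorization up to the harmless extra factor of $\omega$.

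The one genuine difference is how you certify that the root you find gives the spectral gap. The paper's route is Proposition~\ref{prop:exact_spectral_gap} (``spectral gap is the first zero of the determinant''), which requires it to also exclude $\lambda\le 1/4$ --- this it does explicitly, first by monotonicity of the Poincar\'e constant under restriction (so $\lambda\ge 1/4$), then by plugging the degenerate ansatz $(At+B)e^{t/2}$ into the Neumann conditions and showing the resulting linear system is nonsingular, forcing the trivial solution. You bypass both steps by invoking Corollary~\ref{cor:sgWhenMonotoneSolution}: exhibit a monotone eigenfunction for a $\lambda>0$ and conclude that $\lambda=\sg(\mu)$, which in particular forces $\lambda>1/4$ a posteriori. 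That is cleaner and entirely valid, provided you make the monotonicity check rigorous. Your sketch (``the relevant $u_i'$ vanishes only at the boundary within the first half-period'') works because you have localized $\omega < \min(\pi/|a|, \pi/|b|)$: on each half the factor of $u'$ other than $e^{|t|/2}$ is a sinusoid of half-period $\pi/\omega$ exceeding the length of that half, so $u'$ has at most one zero there, which is the endpoint where the Neumann condition holds; continuity of $u'$ at $0$ (and the observation that $u'(0)\ne 0$, else the sinusoid would have two zeros within one half-period) then gives a single sign on $(a,b)$. One small caveat: as stated in the paper, Corollary~\ref{cor:sgWhenMonotoneSolution} is proved through Chen's variational formula, whose hypothesis is $V\in C^2$, while here $V(t)=|t|$ is only piecewise $C^1$. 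The corollary's statement is phrased for the more general setting of Section~\ref{sec:spectral}, and it can indeed be established there without Chen (e.g.\ two strictly monotone centered functions have positive covariance, so no eigenfunction for $\lambda_k$, $k\ge 2$, can be monotone), but it would be worth flagging that you are relying on this more general validity. With that noted, your argument is complete and arguably more economical than the paper's, since it avoids the separate $\lambda\le 1/4$ exclusion.
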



\subsection*{Triangular distribution}

\begin{prop} \label{prop:triangular}
Let us consider the triangular distribution $\mathcal{T}$ supported by $[-1,1]$, 
$\mu (dt) = (1 - \vert t \vert) \mathbbm{1}_{[-1,1]}(t) dt$. 
Denote by $J_0$ the Bessel function of the first kind with parameter $\nu=0$ (\cite{Abra_Steg}, Chapter 9),
and $r_1$ its first root. 
Then the Poincar\'e constant for $\mathcal{T}$ is $r_1^{-2}$, approximately equal to $0.1729$,
saturated by $f(x) = \textrm{sign}(x) J_0 \left( r_1 (1 - \vert x \vert) \right)$.
\end{prop}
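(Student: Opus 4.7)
The plan is to identify $f_{\mathrm{opt}}(x) := \sgn(x) J_0(r_1(1-|x|))$ as a saturating eigenfunction of the spectral problem for $\mu$, and to match the resulting lower bound with an upper bound obtained by a Picone-type comparison. First I would verify the regularity: the identity $J_0(r_1) = 0$ gives $f_{\mathrm{opt}}(0^\pm) = 0$, and $f_{\mathrm{opt}}'(x) = r_1 J_1(r_1(1-|x|))$ is continuous across $0$ and strictly positive on $(-1,1)$ since the first positive zero of $J_1$ exceeds $r_1$. Hence $f_{\mathrm{opt}}$ is $C^1$, odd, bounded, strictly monotonic, and lies in $\Sob{1}{\mu}((-1,1))$.

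Second, substituting $u = r_1(1-|x|)$ and invoking Bessel's equation $u J_0''(u) + J_0'(u) + u J_0(u) = 0$ yields the Sturm--Liouville identity $((1-|x|) f_{\mathrm{opt}}')' = -r_1^2 (1-|x|) f_{\mathrm{opt}}$ on each of $(-1,0)$ and $(0,1)$. Integrating by parts on each half-interval against an arbitrary $g \in \Sob{1}{\mu}((-1,1))$, the boundary contributions at $\pm 1$ vanish because the weight $1-|x|$ does, while the contributions at $0$ cancel by continuity of $f_{\mathrm{opt}}'$, yielding the weak eigenvalue identity
$$\int_{-1}^1 f_{\mathrm{opt}}' \, g' \, d\mu = r_1^2 \int_{-1}^1 f_{\mathrm{opt}} \, g \, d\mu \qquad \forall g \in \Sob{1}{\mu}((-1,1)).$$
Specializing to $g = f_{\mathrm{opt}}$, which is $\mu$-centered by oddness, gives $J(f_{\mathrm{opt}}) = r_1^2$ and therefore $\poinc(\mu) \geq r_1^{-2}$.

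For the matching upper bound, Lemma~\ref{lem:symmetry-quotient} reduces matters to proving $J(g) \geq r_1^2$ for every nonzero odd $g \in \Sob{1}{\mu}((-1,1))$; such a $g$ satisfies $g(0) = 0$ by continuity. Setting $\phi := g/f_{\mathrm{opt}}$ on $(0,1)$, expanding $(g')^2 = (\phi' f_{\mathrm{opt}} + \phi f_{\mathrm{opt}}')^2$, integrating on $[\alpha,\beta] \subset (0,1)$ against $(1-x)\, dx$, and applying integration by parts to the cross term together with the Sturm--Liouville equation for $f_{\mathrm{opt}}$ yields the Picone-type identity
$$\int_\alpha^\beta (g')^2 (1-x)\, dx - r_1^2 \int_\alpha^\beta g^2 (1-x)\, dx = \int_\alpha^\beta (\phi')^2 f_{\mathrm{opt}}^2 (1-x)\, dx + \Bigl[\phi^2 f_{\mathrm{opt}} f_{\mathrm{opt}}' (1-x)\Bigr]_\alpha^\beta.$$
The main obstacle is passing to the limit $\alpha \to 0^+$, $\beta \to 1^-$. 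At the left endpoint the asymptotics $f_{\mathrm{opt}}(x) \sim r_1 J_1(r_1)\, x$ together with the Cauchy--Schwarz bound $g(x)^2 \leq x \int_0^x (g')^2\, dt$ show that the boundary term is $O(g^2/x) \to 0$; at the right endpoint the boundary integrand is pointwise non-negative (all four factors are), so since the left-hand side converges and the $(\phi')^2$ term is non-decreasing, a simple rearrangement forces this boundary term to have a finite, non-negative limit. The inequality $\int_0^1 (g')^2 (1-x)\, dx \geq r_1^2 \int_0^1 g^2 (1-x)\, dx$ follows, and doubling via even symmetry of the weight gives $J(g) \geq r_1^2$ on the whole interval, completing the proof.
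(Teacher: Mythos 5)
Your proof is correct and, for one of the two directions, genuinely different from the paper's. The lower bound $\poinc(\mu)\ge r_1^{-2}$ proceeds essentially as in the paper: you verify via Bessel's equation that $f_{\mathrm{opt}}$ satisfies the Sturm--Liouville equation $((1-|x|)f_{\mathrm{opt}}')'=-r_1^2(1-|x|)f_{\mathrm{opt}}$, integrate by parts against $f_{\mathrm{opt}}$ itself, and read off $J(f_{\mathrm{opt}})=r_1^2$; the paper does exactly this on $[0,1-\epsilon]$ and lets $\epsilon\to 0$. The interesting divergence is the upper bound $\poinc(\mu)\le r_1^{-2}$. The paper gets it by invoking its machinery for restrictions to subintervals: the spectral problem on $(\epsilon,1)$ has general solution $A J_0(x\sqrt{\lambda})+B Y_0(x\sqrt{\lambda})$, the determinant condition pins down $\sg(\mu_{|I_\epsilon})$, and Lemma~\ref{lem:continuitySupport} passes to the limit $\epsilon\to 0$, killing the $Y_0$ contribution because $\epsilon Y_0'(\epsilon\sqrt\lambda)\to 2/\pi$. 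You instead run a Picone-type comparison with $\phi=g/f_{\mathrm{opt}}$, writing $(g')^2-r_1^2 g^2$ (against the weight) as $(\phi')^2 f_{\mathrm{opt}}^2$ plus a total derivative, and arguing the boundary term vanishes at $0$ and is non-negative near $1$. This is more self-contained: it needs neither the determinant characterization nor the continuity-in-support lemma, only the symmetry reduction of Lemma~\ref{lem:symmetry-quotient} and the pointwise ODE. The trade-off is that the paper's route systematically produces the full spectrum and reuses lemmas it has already developed, whereas Picone proves only the gap inequality, which is all that is required here.

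Two places in your plan deserve to be spelled out before they could be called complete. First, when asserting the weak eigenvalue identity for \emph{arbitrary} $g\in\Sob{1}{\mu}$, the boundary terms at $\pm 1$ are of the form $f_{\mathrm{opt}}'(x)\,g(x)\,(1-|x|)$; since $f_{\mathrm{opt}}'(x)\sim \tfrac{r_1^2}{2}(1-|x|)$ near the endpoints, they behave like $(1-|x|)^2 g(x)$, and you need the fact that $g\in\Sob{1}{\mu}$ grows at most like $\sqrt{-\log(1-|x|)}$ (which follows from Cauchy--Schwarz against the weight $1-|x|$). That said, for the lower bound you only apply the identity with $g=f_{\mathrm{opt}}$, which is bounded, so this subtlety is inessential there. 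Second, in the Picone step, it is cleaner to note simply that for every fixed $\beta<1$ one already has $\int_0^\beta (g')^2(1-x)\,dx - r_1^2\int_0^\beta g^2(1-x)\,dx \ge 0$, since after sending $\alpha\to 0$ the right-hand side is a sum of two manifestly non-negative terms; then monotone/dominated convergence in $\beta$ gives the inequality on $(0,1)$ without needing to discuss whether the boundary term at $\beta$ has a limit. With these points made explicit, your argument is a valid alternative proof.
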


\begin{rem} The result, involving Bessel functions, shows a connection with the spectrum of the Laplacian on the unit disk $\mathcal{D}$. Actually, using the symmetry of the problem, we can see that the spectral problem for the triangular distribution is the same than for the Laplacian on $\mathcal{D}$ restricted to radial solutions, as investigated e.g. in \cite{Dautray_Lions_vol3}, \S~8.1.1d. Details are given in the appendix.
\end{rem}


\subsection*{Truncated normal distribution} 
Let us consider the standard Gaussian distribution 
$d\gamma(x) = \frac{1}{\sqrt{2\pi}} e^{-V(x)}$ with $V(x) = \frac{x^2}{2}$. 
The spectral problem (P3) associated to Poincar\'e inequality is relative to the operator $Lf = f'' - V'f'$: 
\begin{eqnarray*}
Lf &=& f'' - x f'
\end{eqnarray*}
The spectral gap is known on every interval formed by successive roots of Hermite polynomials (see e.g. \cite{Dautray_Lions_vol3}), as detailed in Prop.~\ref{prop:SpectralGapHermite}. 
Recall that Hermite polynomials $H_n$ are the orthonormal basis of polynomials 
for $L^2(d\gamma)$ with unitary highest coefficient:
$ \int_{-\infty}^\infty H_n(x)H_m(x)d\gamma(x) = \delta_{n,m}$.
The first ones are:
$$ H_0(x) = 1 \quad H_1(x) = x \quad H_2(x) = x^2 - 1 \quad H_3(x) = x^3 - 3x $$
and the following can be obtained by the recursive relation: $ H_{n+1}(x) = x H_n(x) - H'_n(x)$.
They are also shown to verify the identity $H'_{n+1}(x) = (n+1) H_n(x)$ which proves, 
together with the recursion formula, that $H_n$ is solution of the differential equation
$ Lf = - nf$. 

\begin{prop}[Spectral gap of the normal distribution, specific case]\label{prop:SpectralGapHermite}
Let $n\ge 1$, $x_1<\cdots<x_n$ the roots of the $n$-th Hermite polynomial $H_n$. 
Let $I$ be one of the intervals: $(-\infty,x_1]$, $[x_1,x_2]$,\ldots ,$[x_{n-1},x_n]$, $[x_n,+\infty)$. Then the (Neumann) spectral gap on $I$ is $n+1$.
\end{prop}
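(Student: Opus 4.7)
The approach I would take is to exhibit the Hermite polynomial $H_{n+1}$ as an explicit eigenfunction of problem (P3) on each interval $I$ of the statement, with eigenvalue $n+1$. The paper has already recorded the two identities $H_{n+1}' = (n+1) H_n$ and $L H_{n+1} = -(n+1) H_{n+1}$, so the differential equation part requires no computation. The key observation is that every finite endpoint of $I$ is by hypothesis a root of $H_n$, so the Neumann condition $H_{n+1}'(\cdot) = (n+1) H_n(\cdot) = 0$ holds automatically there. Moreover, by the very definition of consecutive roots, the open interior of $I$ contains no root of $H_n$, hence $H_{n+1}' = (n+1) H_n$ has constant sign on the interior and $H_{n+1}$ is strictly monotonic there.

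For a bounded interval $I = [x_i, x_{i+1}]$, this already concludes the proof: $H_{n+1}$ is a strictly monotonic solution of (P3) on $I$ with $\lambda = n+1$, so Corollary~\ref{cor:sgWhenMonotoneSolution} gives $\sg(\mu_{|I}) = n+1$, i.e.\ $\poinc(\mu_{|I}) = 1/(n+1)$.

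The main obstacle is the unbounded case $I = (-\infty, x_1]$ or $I = [x_n, +\infty)$, since Theorem~\ref{prop:SpectralTheorem}, Chen's variational formula, and Corollary~\ref{cor:sgWhenMonotoneSolution} are stated in the paper for bounded intervals only. My plan is to prove the two matching bounds separately. For the upper bound $\sg(\mu_{|I}) \leq n+1$, I would use $H_{n+1}$ as a centered test function: the Rodrigues-type identity $(H_n e^{-x^2/2})' = -H_{n+1} e^{-x^2/2}$ integrated over $I$ makes the finite-endpoint contribution vanish (since $H_n$ vanishes there) and the infinite-endpoint contribution vanish (by Gaussian decay), yielding $\int_I H_{n+1}\, d\mu_{|I} = 0$. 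Pairing $L H_{n+1} = -(n+1) H_{n+1}$ with $H_{n+1}$ against the Gaussian weight on $I$ and integrating by parts (boundary terms vanishing by the same reasons, using $H_{n+1}'(x_1) = 0$) produces $\int_I (H_{n+1}')^2\, d\mu_{|I} = (n+1) \int_I H_{n+1}^2\, d\mu_{|I}$, whence $J(H_{n+1}) = n+1$ and $\sg(\mu_{|I}) \leq n+1$. For the lower bound $\sg(\mu_{|I}) \geq n+1$, I would invoke Chen's variational formula applied to the strictly monotonic test function $H_{n+1}$, which immediately gives $(-L H_{n+1})'/H_{n+1}' = (n+1) H_{n+1}'/H_{n+1}' = n+1$. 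The extension of Chen's formula to this unbounded Gaussian setting is justified by approximating $I$ from inside by bounded intervals $I_R$ and passing to the limit via Lemma~\ref{lem:continuitySupport}; the super-polynomial decay of the Gaussian weight ensures no pathologies arise at infinity.
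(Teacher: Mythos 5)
Your argument is essentially the paper's for the bounded intervals $[x_i,x_{i+1}]$: the paper also observes that $H_n$ vanishes at the finite endpoints and has constant sign inside, so $H_{n+1}$ (with $H_{n+1}'=(n+1)H_n$ and $LH_{n+1}=-(n+1)H_{n+1}$) is a strictly monotonic solution of problem (P3) with eigenvalue $n+1$, and then invokes Corollary~\ref{cor:sgWhenMonotoneSolution}. Where you go beyond the paper is the unbounded intervals $(-\infty,x_1]$ and $[x_n,+\infty)$. You correctly noticed that Theorem~\ref{prop:SpectralTheorem}, Chen's formula (Theorem~7), and Corollary~\ref{cor:sgWhenMonotoneSolution} are all stated in the paper only for bounded intervals with a density bounded away from zero, whereas the paper's proof applies Corollary~\ref{cor:sgWhenMonotoneSolution} to the unbounded intervals without comment. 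Your two-sided argument fills this gap: the Rodrigues-type identity $(H_n e^{-x^2/2})'=-H_{n+1}e^{-x^2/2}$ gives centering, an integration by parts (boundary terms vanishing by $H_n(x_1)=0$ at the finite end and Gaussian decay at infinity) gives $J(H_{n+1})=n+1$ hence $\sg\le n+1$, and the Chen bound $\sg(\mu_{|I_R})\ge n+1$ on exhausting bounded intervals combined with Lemma~\ref{lem:continuitySupport} gives $\sg\ge n+1$ in the limit. This is a sound and more careful treatment of the unbounded case than what the paper records.
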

\begin{proof}
On each $I$, the function $H_n$ is of constant sign and vanishes on the boundary. 
Since $H_{n+1}'=(n+1) H_n$ then inside $I$, $H_{n+1}$ is strictly monotonic and $H_{n+1}'=0$ on the boundary. Furthermore, $H_{n+1}$ is solution of $Lf = -(n+1)f$.
Hence, by Corollary~\ref{cor:sgWhenMonotoneSolution}, $n+1$ is the spectral gap.
\end{proof}

In  the general case, the spectral gap of the truncated normal distribution is related to  hypergeometric series, and more specifically to \textit{confluent hypergeometric series or Kummer's function} (\cite{Abra_Steg}, Section 13).
The Kummer's function $M_{a_1, b_1}(z) = {}_1F_1 \left(a_1; b_1; z\right)$ 
is a series $\sum_{p \geq 0} x_p$ with $x_0=1$ satisfying 
\begin{equation}
\label{hypergeom_form}
\frac{x_{p+1}}{x_p} = \frac{(p+a_1) z}{(p+b_1)(p+1)}
\end{equation}

\begin{prop}[Spectral gap of the truncated normal distribution]\label{prop:SpectralGapTruncatedNormal}
Let $a < b$, and define:
$$h_0(\lambda,t) =  M_{\frac{1-\lambda}2; \frac12}\left(\frac{t^2}2\right)
\qquad h_1(\lambda,t) = M_{\frac{2-\lambda}2; \frac32}\left(\frac{t^2}2\right)$$ 
Notice that $h_0$ and $h_1$ generalize Hermite polynomials: 
When $\lambda$ is an odd (resp. even) positive integer, 
then $t \mapsto h_0(\lambda, t)$ (resp. $t \mapsto t h_1(\lambda, t)$)
is equal, up to a multiplicative constant, to the Hermite polynomial of degree $\lambda-1$;
In particular they have the same zeros 
(\cite{Abra_Steg}, formulas 13.6.17 \& 13.6.18).\\
Denote $\truncnorm$ the standard normal distribution $\mathcal{N}(0,1)$ truncated on $[a, b]$. Then its spectral gap $\sg{(\truncnorm)}$ is the first zero of the function
$$ d(.) = b h_0(., a) h_1(., b) - a h_0(., b) h_1(., a)$$
Furthermore:
\begin{itemize}
\item If there exists $\lambda$ such that $a, b$ are two successive zeros of $h_0(\lambda, .)$ or two successive zeros of $h_1(\lambda, .)$ then $\sg(\truncnorm) = \lambda$
\item If $a= -b$ then $\sg(\truncnorm)$ is the first zero of $h_0(., a)$.
\item If $a=0$ (resp. $b=0$), then $\sg(\truncnorm)$ is the first zero of $h_1(., b)$ (resp. $h_1(., a)$).
\end{itemize}
\end{prop}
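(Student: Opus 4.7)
The strategy is to reduce to the equivalent Dirichlet problem (P4) via Proposition~\ref{prop:NeumannEquivDirichlet}, which applies since $V(x)=x^2/2$ is $C^2$, and then invoke the Dirichlet version of Proposition~\ref{prop:exact_spectral_gap} stated in Remark~\ref{rem:prop:exact_spectral_gap}. With $V'=x$ and $V''=1$, the Dirichlet ODE for $u$ becomes
\begin{equation*}
u''-xu'+(\lambda-1)u=0,
\end{equation*}
so the main task is to exhibit a basis of its solutions in terms of Kummer functions and then read off the determinant.

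Since the ODE is invariant under $x\mapsto -x$, its solution space splits into even and odd parts. I would seek the even solution in the form $u(x)=\phi(x^2/2)$. Setting $z=x^2/2$, a short computation using $u'=x\phi'(z)$ and $u''=\phi'(z)+2z\phi''(z)$ reduces the equation to
\begin{equation*}
z\phi''(z)+\Bigl(\tfrac12-z\Bigr)\phi'(z)+\tfrac{\lambda-1}{2}\,\phi(z)=0,
\end{equation*}
which is Kummer's confluent hypergeometric equation with parameters $a_1=(1-\lambda)/2$ and $b_1=1/2$. This identifies the even solution as $f_\lambda(x)=h_0(\lambda,x)$. Analogously, the ansatz $u(x)=x\,\psi(x^2/2)$ leads after a similar calculation to a Kummer equation with parameters $a_1=(2-\lambda)/2$ and $b_1=3/2$, giving the odd solution $g_\lambda(x)=x\,h_1(\lambda,x)$. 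Linear independence is immediate from the opposite parities, so $\{f_\lambda,g_\lambda\}$ is a basis.

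Plugging this basis into the determinant of Remark~\ref{rem:prop:exact_spectral_gap} yields
\begin{equation*}
d(\lambda)=f_\lambda(a)g_\lambda(b)-f_\lambda(b)g_\lambda(a)=b\,h_0(\lambda,a)h_1(\lambda,b)-a\,h_0(\lambda,b)h_1(\lambda,a),
\end{equation*}
and $\sg(\truncnorm)$ is its first positive zero. The three special cases then follow from the structure. The symmetric case $a=-b$ is handled by the final clause of Remark~\ref{rem:prop:exact_spectral_gap}: with $V$ even and the interval symmetric, the spectral gap is the first zero of $\lambda\mapsto f_\lambda(b)=h_0(\lambda,b)$, which coincides with $h_0(\lambda,a)$ by evenness of $h_0$ in its second argument. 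For $a=0$, the condition $u(0)=0$ rules out any contribution from $f_\lambda$ (since $h_0(\lambda,0)=M((1-\lambda)/2,1/2,0)=1\neq 0$), forcing the eigenfunction to be proportional to $g_\lambda$; the condition at $b$ then becomes $h_1(\lambda,b)=0$. The case $b=0$ is symmetric. Finally, when $a,b$ are two successive zeros of $h_0(\lambda,\cdot)$ (resp.\ $h_1(\lambda,\cdot)$) for some $\lambda$, the corresponding $f_\lambda$ (resp.\ $g_\lambda$) satisfies both Dirichlet conditions and does not vanish on $(a,b)$, so Corollary~\ref{cor:sgWhenMonotoneSolution} (its Dirichlet variant) identifies this $\lambda$ as the spectral gap.

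The main obstacle is just bookkeeping: carrying out the two substitutions cleanly so that the parameters of the Kummer equation match the definitions of $h_0$ and $h_1$, and, in the special cases, verifying the non-vanishing hypothesis needed to apply Corollary~\ref{cor:sgWhenMonotoneSolution} and conclude that the candidate $\lambda$ is really the smallest positive eigenvalue rather than a higher one.
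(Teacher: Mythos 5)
Your proof is correct and follows essentially the same route as the paper: reduce to the Dirichlet problem (P4), identify an even/odd basis of solutions in terms of Kummer functions, and read off the spectral gap from the determinant of Remark~\ref{rem:prop:exact_spectral_gap}. The only noteworthy difference is in how you produce the Kummer basis: you substitute $z=x^2/2$ and transform the ODE into Kummer's confluent hypergeometric equation for each parity, whereas the paper posits a power-series ansatz $\sum c_n t^n$, derives the two-term recurrence $c_{n+2}=\frac{n-(\lambda-1)}{(n+1)(n+2)}c_n$, and matches it to the series definition of $M_{a_1,b_1}$ after splitting even and odd indices; both are routine and equivalent, though the paper's recurrence route ties more directly to the series form of $M$ quoted in Equation~(\ref{hypergeom_form}). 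One small caution you rightly flag at the end: for the ``two successive zeros of $h_1(\lambda,\cdot)$'' case, $g_\lambda(t)=t\,h_1(\lambda,t)$ vanishes at $t=0$, so if $0\in(a,b)$ (which by evenness of $h_1$ forces $a=-b$) the non-vanishing hypothesis of Corollary~\ref{cor:sgWhenMonotoneSolution} fails; the paper's phrase ``analogous conclusion stands with $h_1$'' glosses over this, and the clean reading is that this item is meant for intervals with $a,b$ of the same sign (or that ``zeros'' refers to the zeros of $t\mapsto t\,h_1(\lambda,t)$, which include $0$), the symmetric case being covered by the dedicated $h_0$ clause.
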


\subsection{Numerical method} \label{sec:numerical_method}

In this section we present a numerical method to estimate the spectral gap.
The technique is well-known in the literature of elliptic problems (see e.g. \cite{Raviart_Thomas} or \cite{Allaire_Book}), and we adapt it to our case.
We consider the framework and notations of Section~\ref{sec:spectral}.
The spectral problem (P3),
$$ f'' - V'f' = - \lambda f, \qquad f'(a) = f'(b) =0 $$
can be solved numerically by using finite element methods.\\

The first step is to consider the variational problem (P2), 
rewritten under the form (see the Proof of Theorem~\ref{prop:SpectralTheorem}):
\begin{equation} \label{eq:weak_formulation2}
\alpha(f, g) = (\lambda+1) \langle f, g \rangle \qquad \forall g \in \Sob{1}{\mu}(\Omega) 
\end{equation}
where $\alpha(f,g) = \langle f', g' \rangle + \langle f, g \rangle$ 
is coercive on $\Sob{1}{\mu}(\Omega)$, and to show its equivalence to (P3). This was done in Theorem~\ref{prop:SpectralTheorem}.\\

\begin{figure} 
\begin{center}
\includegraphics[width=0.75\textwidth]{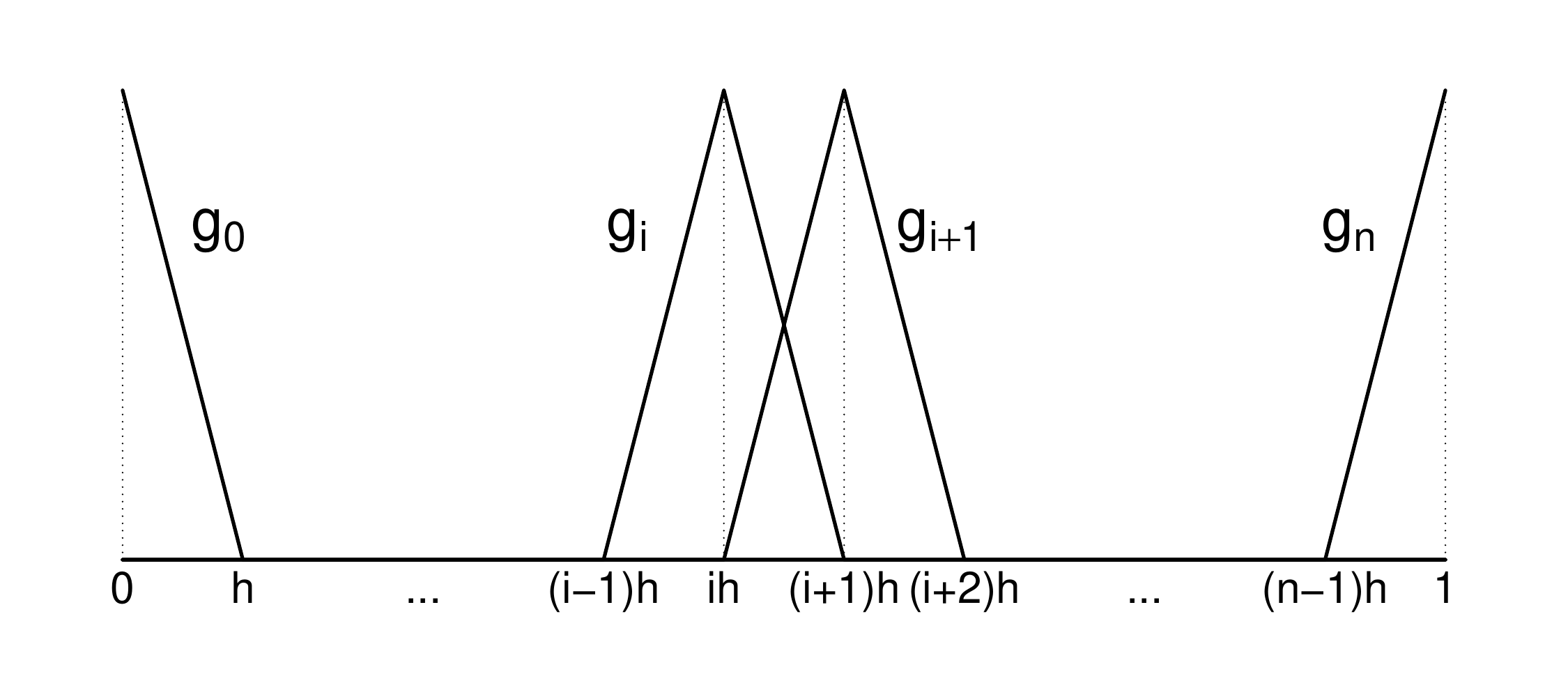}
\caption{Basis of finite elements $\mathbbm{P}_1$ on $[0,1]$. 
The $g_i$'s are hat functions for $i = 1, \dots, n-1$, truncated at the boundaries ($i=0$ and $i=n$).}
\label{fig:ElementsFinisP1}
\end{center}
\end{figure}

The second step is to observe that (\ref{eq:weak_formulation2}) can be solved algebraically in a finite-dimensional space. Denote $G_h$ a finite-dimensional space of $\Sob{1}{\mu}(\Omega)$, 
where $h>0$ is a discretization parameter, and let $(g_i)_{0 \leq i \leq n}$ be a basis of $G_h$.
Typically, $G_h = \mathbbm{P}_1$, the space of Lagrange finite elements, 
composed of piecewise linear functions on $[a, b]$.
A basis is formed by the `hat' functions $g_i$ represented in Figure~\ref{fig:ElementsFinisP1}.
A solution in $G_h$ of (\ref{eq:weak_formulation2}) is given by
$$ f_h = \sum_{i=0}^n f_{h, i} g_i$$
and the weak formulation in $G_h$ can be written in the matricial form:
\begin{equation} \label{eq:KM_matricial_problem}
K_h \mathbf{f_h} = (\lambda+1) M_h \mathbf{f_h}
\end{equation}
where $\mathbf{f_h}$ is the vector of $(f_{h,i})_{0 \leq i \leq n}$, and $K_h$ and $M_h$ are called respectively `rigidity matrix' and `mass matrix', defined by:
$$K_h = (\alpha(g_i, g_j))_{0 \leq i,j \leq n}  \qquad \qquad M_h = (\langle g_i, g_j \rangle)_{0 \leq i,j \leq n}$$
Notice that $M_h$ and $K_h$ are symmetric and positive definite.
Then the problem can be reexpressed in a standard form by using the Choleski decomposition of $M_h = L_h L_h^T$,
where $L_h$ is a lower triangular matrix. 
Indeed, denoting $\widetilde{K_h} = L_h^{-1} K_h (L_h^T)^{-1}$ and $\widetilde{\mathbf{f_h}} = L^T \mathbf{f_h}$, 
(\ref{eq:KM_matricial_problem}) is written:
$$ \widetilde{K_h} \widetilde{\mathbf{f_h}} = (\lambda+1) \widetilde{\mathbf{f_h}} $$
Thus $\lambda$ and $\widetilde{\mathbf{f_h}}$ are obtained by performing the eigen decomposition of the symmetric matrix $\widetilde{K_h}$. Finally $\mathbf{f_h}$ is deduced from the relation $\mathbf{f_h} = (L_h^T)^{-1} \widetilde{\mathbf{f_h}}$.\\

The last step is to observe that the solutions of the finite-dimensional weak formulation (\ref{eq:KM_matricial_problem}) converge to the solutions of (P2), and equivalently (P3),
when $h \rightarrow 0$, as stated in \cite{Raviart_Thomas} (Theorem~6.5.1.), \cite{Allaire_Book} (\S~6.2.2.), or \cite{Babuska_Osborn_inHandbook} (\S~8, Eq. 8.43 and \S~10.1.2. Eq. 10.22), with a speed of convergence linked to the regularity of the solutions. These results are expressed for the Lebesgue measure on $[a,b]$, but are still valid under the assumptions of Theorem~\ref{prop:SpectralTheorem} since for all integer $\ell$, $\Sob{\ell}{\mu}(\Omega)$ is then equal to $\Sob{\ell}{\mathrm{Leb}}(\Omega)$ with equivalent norms (see the proof of Theorem~\ref{prop:SpectralTheorem}).
In our situation, the space spanned by the first two eigenfunctions lies in $\Sob{\ell+1}{\mu}(\Omega)$, with $\ell \geq 1$ (see Theorem~\ref{prop:SpectralTheorem}).
Hence the smallest strictly positive eigenvalue of (\ref{eq:KM_matricial_problem}), i.e. the second one, converges to the spectral gap at the speed $O(h^{2\ell})$,
and -- since it is a simple eigenvalue -- the corresponding eigenvector converges to a function saturating the Poincar\'e inequality at the speed $O(h^\ell)$.

\begin{rem}[Numerical improvements]
We briefly mention two well-known improvements for finite elements algorithms, which apply to our case (see e.g. \cite{Allaire_Book}). Firstly, it is possible to replace the computations of integrals in $M_h$ by quadrature formulas (`mass lumping' speed-up technique). This allows obtaining a \emph{diagonal} matrix and $L_h$ is simply the square root of its diagonal terms. The convergence result is still valid. Secondly, if the solution is regular enough (typically $f \in \Sob{3}{\mu}(\Omega)$), more regular basis functions can be used (such as $\mathbbm{P}_2$ finite elements). The speed of convergence is higher, at the price of a higher computational cost.
\end{rem}

\subsection{Illustrations}

We illustrate the results of the previous paragraphs on the truncated double exponential and truncated normal distributions.
For the former, the semi-analytical result has been used. The numerical method was also tested, and gives a so good approximation that the difference between the two methods could not be seen in Figure~\ref{fig:TruncExp}.
For the latter, however, the numerical method based on finite elements has been prefered, 
rather than searching for the first zero of Kummer's functions. 
Indeed, in practice, it is not easy to find numerically the \emph{first} zero of a given function. 
In particular, it was not possible to justify theoretically the empirical observation that the first zero is the only zero lying in the interval
$]0, \sigma_I^{-2}[$, where $\sigma_I^2$ is the variance of the truncated distribution on $I$.
On the other hand, the numerical method of Section~\ref{sec:numerical_method} is theoretically sound.
Implementations have been done with the R software \cite{R} and the R package `orthopolynom' \cite{HermitePackage}.

\subsection*{Truncated exponential distribution}
Denote by $\mu$ the double exponential distribution.
Figure~\ref{fig:TruncExp} shows the Poincar\'e constant of $\mu$ truncated on $I = [a, b]$.
In the general case, we plotted the contour lines of $\poinc(\mu)$ 
as a function of $F_\mu(a)=P(X \leq a)$ and $1 - F_\mu(b) = P(X \geq b)$, where $X\sim \mu$. 
In this representation, symmetric intervals correspond to the diagonal $y=x$, 
intervals $(-\infty, 0]$ to $y=\frac12$, and intervals $[0, +\infty)$ to $x = \frac12$.
We also added the upper bound of Prop.~\ref{prop:trunc_exp}, 
which is an exact bound in the regions $b \leq 0$ and $a \geq 0$, 
i.e. $1 - F_\mu(b) \geq \frac12$ and $F_\mu(a) \geq \frac12$.

For symmetric intervals, $\poinc(\mu)$ is represented as a function of the mass $\mu(I)$, 
as well as two upper bounds and the lower bound corresponding to the variance of $\mu$ on $I$, 
$\sigma_I^2 = 2 \left(1 - e^{-b} \left( 1+b+\frac{b^2}{2} \right) \right) \left/ F_\mu(b) \right.$.
We can see that the upper bound obtained by symmetric transport $4\mu(I)^{2}$
(Lemma~\ref{lem:TransportSymmetricInterval}) is quite large here, except for strong truncations.
The upper bound of Prop.~\ref{prop:trunc_exp} is much sharper.
The lower bound is also very sharp, except for small truncations, 
since it tends to $2$ when $\mu(I)$ tends to $1$ whereas the Poincar\'e constant tends to $4$.

\begin{figure}[h!]
\centering
\includegraphics[width=0.44\linewidth]{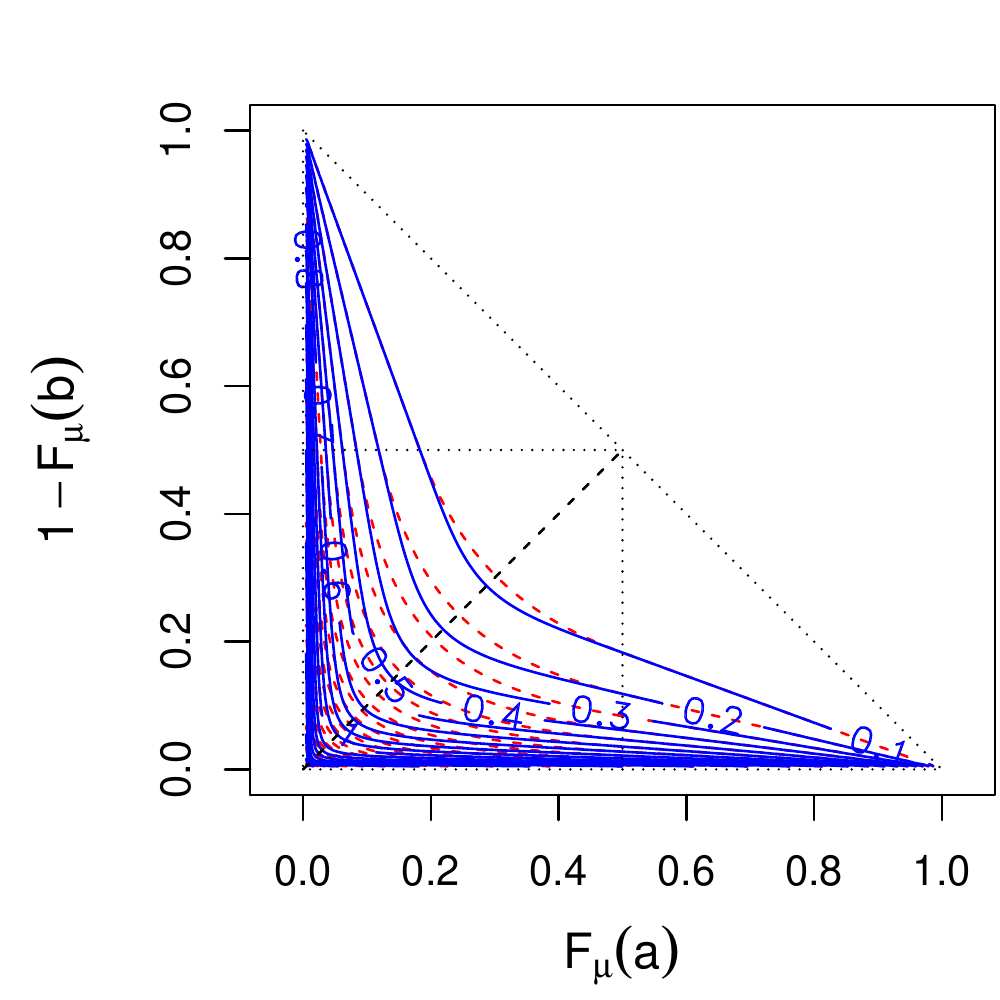}
\includegraphics[width=0.55\linewidth]{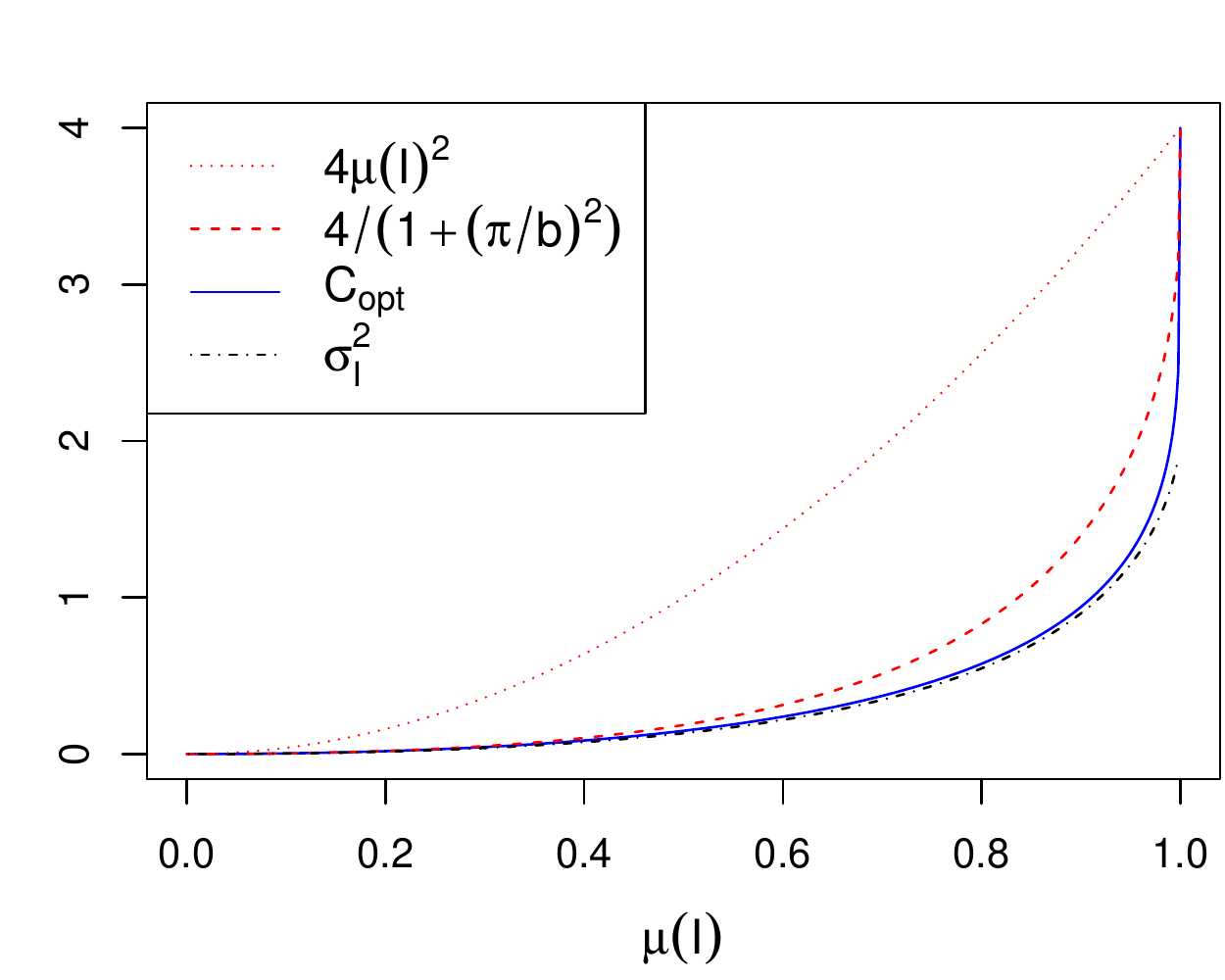}
\caption{Poincar\'e constant of the double exponential distribution $\mu$ truncated on $[a,b]$.
Left: Contour plot as a function of $F_\mu(a)$ and $1-F_\mu(b)$, with the upper bound
$\left( \frac14 + \left( \frac{\pi}{b-a} \right)^2 \right)^{-1}$ (red dashed lines). 
Right: Representation in the symmetric case for $I = [-b, b]$, with two upper bounds 
and the lower bound given by the variance.}
\label{fig:TruncExp}
\end{figure}

\subsection*{Truncated normal distribution}
Similar plots have been produced for the normal distribution $N(0,1)$, truncated on $I = [a,b]$,
gathered in Figure~\ref{fig:PoincareNormal}.
In the symmetric case, the upper bound obtained by transport $\mu(I)^{2}$
(Lemma~\ref{lem:TransportSymmetricInterval}) looks globally accurate, 
especially for strong truncations. 
The lower bound given by the variance of the truncated distribution $\sigma_I^2 = 1 - 2 \frac{b \phi(b)}{\mu(I)}$ 
is even sharper.
In order to visualize the link to Hermite polynomials, we have added the points corresponding to
intervals $[-b,b]$ where $-b,b$ are two successive zeros of Hermite polynomials of degree $2n$, up to degree $100$.
Recall that in that case $\poinc(\mu) = 1/(2n+1)$. 

In the general case, 
we have added the intervals $[r_{n, i-1},r_{n,i}]$ corresponding to consecutive zeros of Hermite polynomials of degree $n$ (up to $n = 100$), associated to Poincar\'e constants equal to $1/(n+1)$.
We can observe that the whole set of Hermite consecutive zeros poorly fill the space of possible intervals,
except for very small symmetric intervals (located around the diagonal $x+y=1$).
The Poincar\'e constants obtained with their extensions, namely Kummer's functions, 
thus provide a useful improvement in many cases.

\begin{figure}[h!]
\centering
\includegraphics[width=0.44\linewidth]{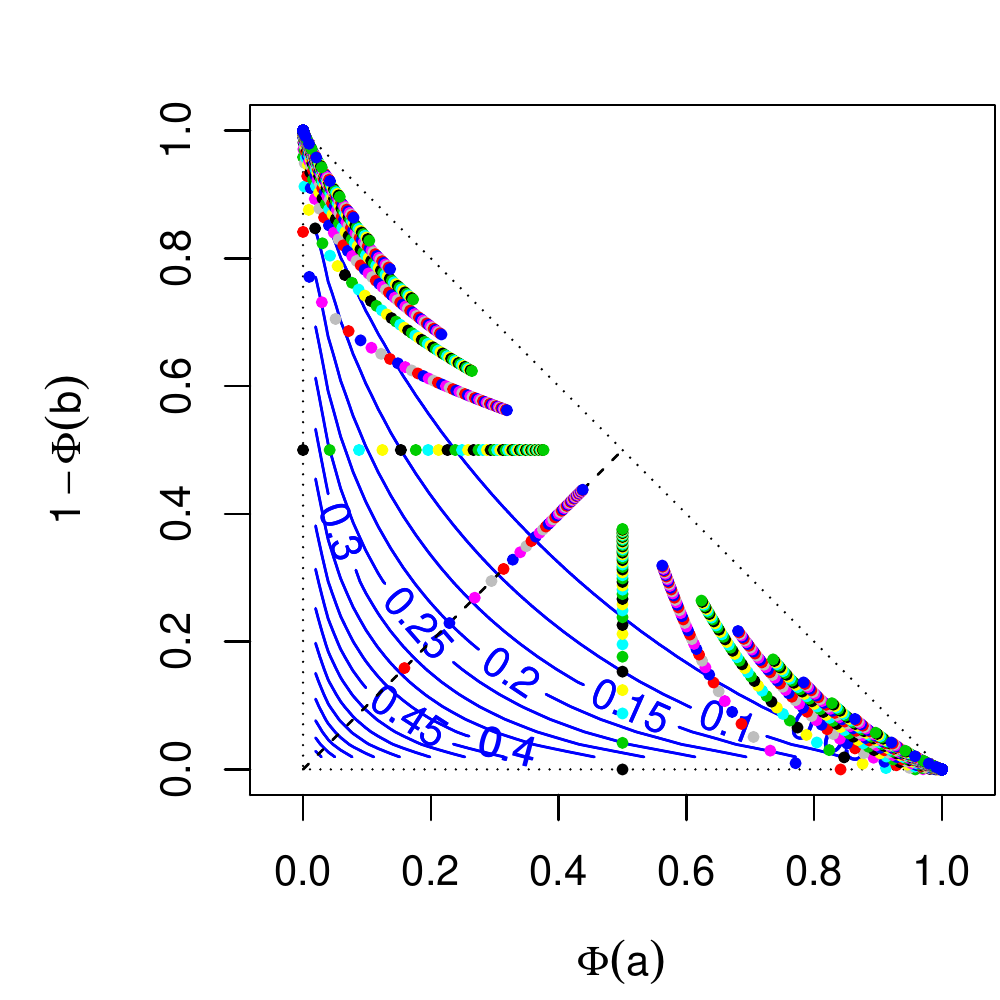}
\includegraphics[width=0.55\linewidth]{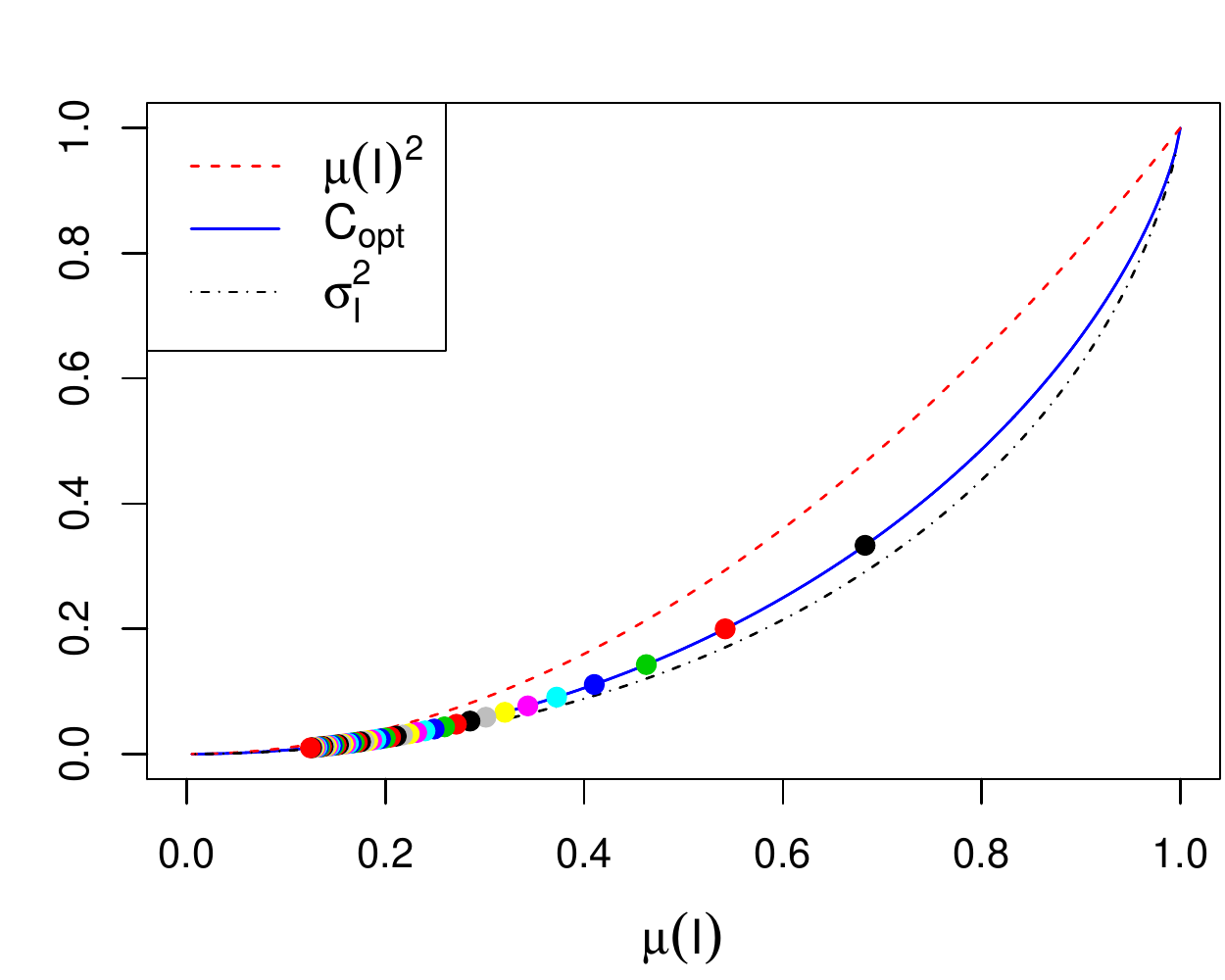}
\caption{Poincar\'e constant of the normal distribution $N(0,1)$ truncated on $[a,b]$.
Left: Contour plot as a function of $\Phi(a)$ and $1-\Phi(b)$. 
Right: Representation in the symmetric case for $I = [-b, b]$, 
with the upper bound $\mu(I)^2$ and the lower bound given by the variance. 
Colored points correspond to Hermite polynomials: For a given degree, a same color is used.}
\label{fig:PoincareNormal}
\end{figure}

\section{Applications} \label{sec:applications}

In this section, we perform some global sensitivity analysis by computing the DGSM-based upper bounds of the total Sobol indices (see Equation \ref{eq:PoincareTypeIneq}), on two hydraulic applications based on different models.
For these applications, the common problem under investigation is the assessment of the water level in the terminal section of a watercourse in case of flood. 
The water level is evaluated as a function of the discharge and other physical parameters which will be detailed hereinafter. 
Uncertainties on these physical parameters are modelled by probability distributions, chosen from expert knowledge and in accordance with the empirical distributions obtained during measurement campaigns.
Flood phenomenon is governed by the Saint Venant shallow water equations, connecting the water level localized in space and time, the discharge, the water section, the lateral inflows, the slope and the head losses due to the friction between the water body and the riverbed.
The first application relies on a coarse simplification of these equations while the second application will be based on a 1D solving model.

The computations of Poincar\'e constants have been done using the R package `sensitivity' \cite{SensitivityPackage} of the R software \cite{R}.
In this package, the function `PoincareConstant()' allows to compute the upper bounds given by the double exponential transport and logistic transport, while the function `PoincareOptimal()' provides the Poincar\'e constant by solving numerically the spectral problem.

\subsection{First study on a simplified flood model}

This application aims at simulating the height of a river and compares it to the height of a dyke that protects industrial facilities. 
This academic model has been introduced for a pedagogical purpose in several methodological papers (see for example \cite{Iooss_Lemaitre_review}), and used for illustrating DGSM-based sensitivity analysis in \cite{Lamboni_et_al_2013} and \cite{RoustantCrossDerivative}.

In the case of steady flow, with no inflows, and large rectangular section, assuming that the classical Manning-Strickler formulation is used for the head losses, we obtain the closed-form solution:
\begin{equation}\label{StricklerFormulaH}
S = H + Z_v - H_d - C_b \quad \mbox{with} \quad H = \left(\frac{Q}{BK_s \sqrt{\frac{Z_m-Z_v}{L} }} \right)^{0.6} 
\end{equation}
where the output variable $S$ is the maximal annual overflow (in meters), $H$ is the maximal annual height of the river (in meters), $Q$ is the river flowrate, $K_s$ the Strickler's friction coefficient, $Z_m$ and $Z_v$ the upstream and downstream riverbed levels (w.r.t. a fixed reference), $L$ and $B$ are the length and width of the water section, $H_d$ is the dyke height and $C_b$ is the bank level.
Table \ref{tab:factors} gives the probability distributions of the model input variables which are supposed to be independent. Notice that, as in the rest of this paper, we have used here the notation ${\mathcal N}(\mu, \sigma^2)$ where $\sigma$ is the standard deviation, contrarily to \cite{Lamboni_et_al_2013} which uses ${\mathcal N}(\mu, \sigma)$. Hence the standard deviation of $K_s$ is equal to $8$.

\begin{table}[!ht]
  \begin{center}
   \begin{tabular}{lccr}
Input & Description & Unit & Probability distribution \\
   \hline
 $X_{1} = Q$ & Maximal annual flowrate & m$^3$/s & Gumbel ${\mathcal G}(1013, 558)$\\ 
  & & &  truncated on $[500 , 3000 ]$  \\  
 $X_{2}=K_s$ & Strickler coefficient & - & Normal ${\mathcal N}(30, 8^2)$ \\
  & & &  truncated on $[15 , +\infty [$  \\
 $X_{3} = Z_v$ & River downstream level & m & Triangular  ${\mathcal T}(49, 51)$ \\
 $X_{4} = Z_m$ & River upstream level  & m  & Triangular  ${\mathcal T}(54, 56)$  \\
 $X_{5} = H_d$ & Dyke height & m &  Uniform ${\mathcal U}[7, 9]$ \\
 $X_{6} = C_b$ & Bank level  & m & Triangular  ${\mathcal T}(55, 56)$ \\
 $X_{7} = L$ & River stretch  & m &  Triangular  ${\mathcal T}(4990, 5010)$  \\
 $X_{8} = B$ & River width  & m &  Triangular  ${\mathcal T}(295, 305)$ \\
\hline
    \end{tabular}
  \end{center}
    \caption{Input variables of the flood model and their probability distributions.}       \label{tab:factors}
\end{table}

Table \ref{tab:app1} gives the Poincar\'e constants and two upper bounds for the different probability distributions used in this test case (the uniform distribution is not represented as the result is well known). 
For the sake of interpretation, we have considered scaled distributions. 
Indeed, a simple change of variable (which can also be viewed as a linear transport) shows that: 
\begin{eqnarray*}
C_P \left( \mathcal{N} (\mu, \sigma^2) \vert [a,b] \right) &=& \sigma^2 C_P \left( \mathcal{N}(0, 1) \left\vert  \left[ \frac{a-\mu}{\sigma}, \frac{b-\mu}{\sigma}\right]\right. \right), \\
C_P \left( \mathcal{G} (\mu, \beta) \vert [a,b] \right) &=& \beta^2 C_P \left( \mathcal{G}(0, 1) \left\vert  \left[ \frac{a-\mu}{\beta}, \frac{b-\mu}{\beta}\right]\right. \right), \\
C_P(\mathcal{T}(a,b)) &=& \left( \frac{b-a}{2} \right) ^2 C_P(\mathcal{T}(-1, 1)),
\end{eqnarray*}
where the notation $|I$ means that the distribution is truncated on the interval $I$.
 We can see the strong decrease factor between the Poincar\'e constant and the upper bounds based on double exponential transport (gain factor around $6$) or logistic transport (gain factor around $2$).

\begin{table}[!ht]
  \begin{center}
	\begin{tabular}{ccccc}
		Prob. dist. &  Upper bound & Upper bound & $\poinc(\mu)$ & Lower bound\\ 
		$\mu$ & (db. exp. transp.) & (logis. transp.) &  & $\var(\mu)$ \\ \hline
		$\mathcal{T}(-1,1)$ & $1$ & $0.296$ & $0.173$ & $0.167$\\ \hline
		$\mathcal{N} \left(0, 1 \left\vert \right. [-1.87, +\infty)  \right)$ & $5.912$ & $1.484$ & 0.892 & $0.862$\\ \hline
	$\mathcal{G} \left(0,1 \left\vert \right. [-0.92, +3.56] \right)$ & $6.956$ & $2.418$ & $1.257$ & $1.012$ \\ \hline 			
	\end{tabular}
  \end{center}
	\caption{Poincar\'e constants and bounds for the scaled laws used in the simplified flood model.}\label{tab:app1}
\end{table}

Figure \ref{fig:app1} gives the final global sensitivity analysis results for this test case.
It is based on the numerical values obtained in \cite{Lamboni_et_al_2013}.
In particular, the DGSM $\nu_i$ ($i=1,\ldots,8)$ have been computed via a sample of the output derivatives coming from a low discrepancy sequence of the inputs of size $10\,000$.
The previous results of \cite{Lamboni_et_al_2013} (given here in Fig. \ref{fig:app1}, left)  have shown that the DGSM-based upper bounds can be used for a screening purpose ({\it i.e.} identifying non influential inputs which have a total Sobol index close to zero), but are useless for quantifying the effects of the influential inputs (because of their non-informative high values).
In contrary, our new results (Fig. \ref{fig:app1}, right) show that the DGSM-based upper bounds give a reliable information on the real influence of the inputs (in terms of contribution to the model output variance) thanks to their closeness to total Sobol indices.
The hierarchy of influence given by the DGSM-based upper bounds is the same than 
for total Sobol indices: the flowrate $Q$ is the most influential input, followed by $H_d$, $Z_v$ and $K_s$. 
The remaining four inputs are not active in the model.
We can also note that screening is now fully efficient as $C_b$ can be judged as non influential (by choosing for example a threshold of $5\%$ of the total variance for the sensitivity index). 

\begin{figure}[!ht]
\centering
\includegraphics[width=0.49\linewidth]{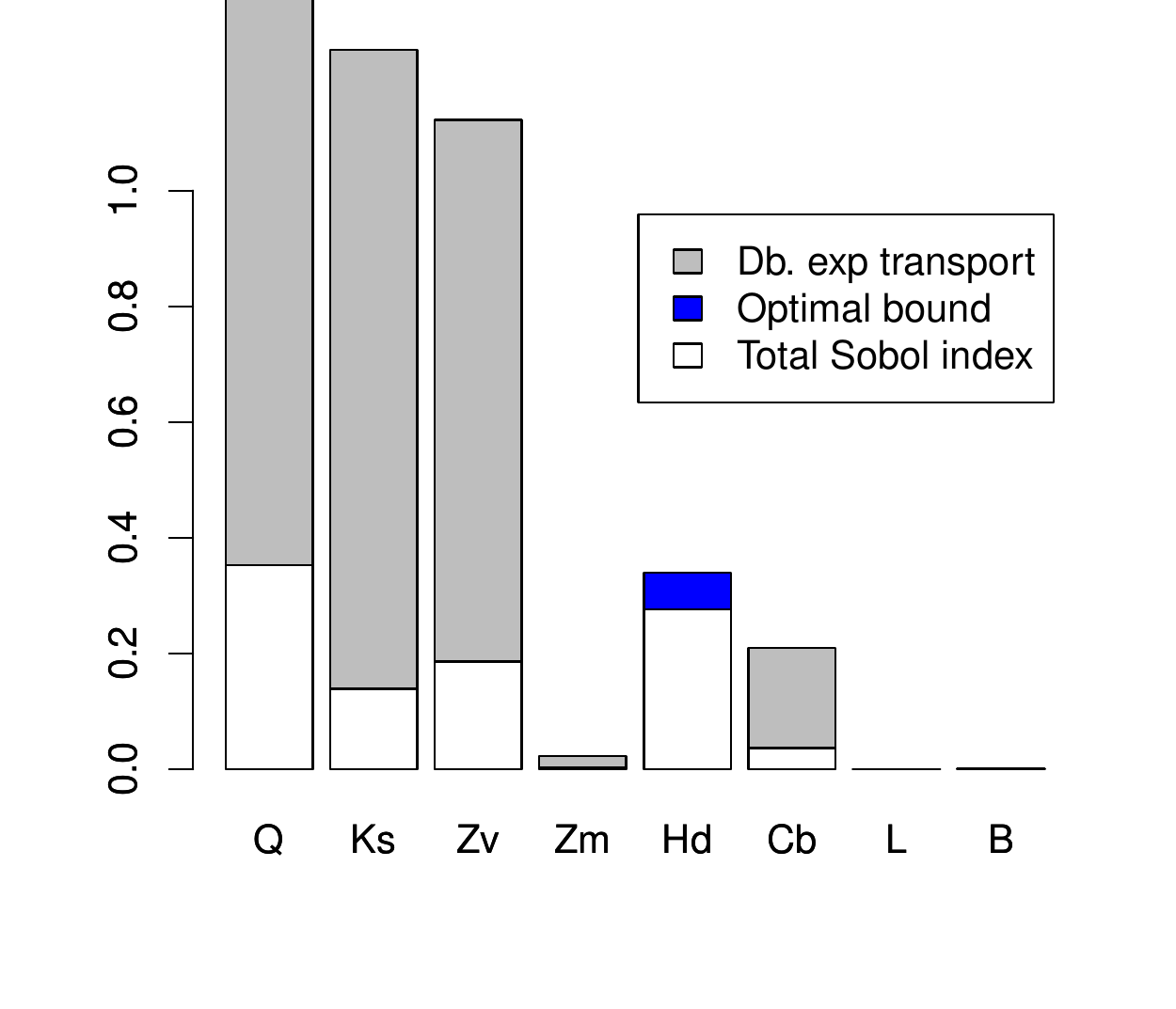} \includegraphics[width=0.49\linewidth]{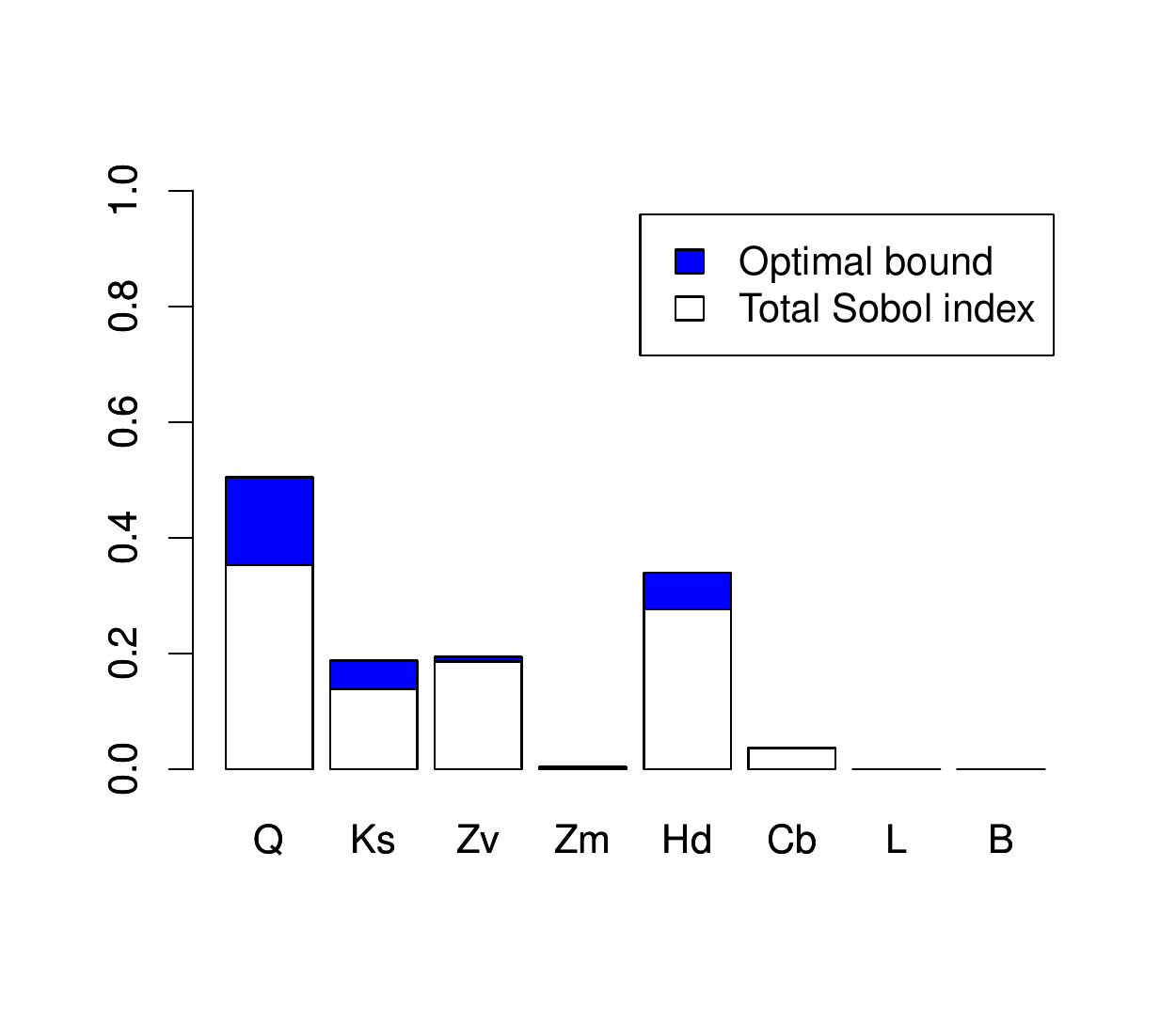}
	\caption{Comparison between total Sobol indices and DGSM-based upper bounds for the simplified flood model. Left: Results with double exponential transport; Right: New optimal bounds, obtained with Poincar\'e constants.}\label{fig:app1}
\end{figure}

\begin{rem}
For the DGSM-based upper bound of $K_s$, our value (Table \ref{tab:app1}, left) is different from the one of \cite{Lamboni_et_al_2013} (Table 5).
Indeed, an error is present in \cite{Lamboni_et_al_2013} where the multiplicative factor $2\pi$ has been omitted (see also the erratum in \cite{RoustantCrossDerivative}, Remark 3).
Hence, in \cite{Lamboni_et_al_2013}, $0.198$ has to be replaced by $1.244$.
\end{rem}

\subsection{Application on a 1D hydraulic model}

The Mascaret computer code (\cite{Goutal_et_al_2012}) is a computer code based on a 1D solver of the Saint Venant equations, allowing engineers to calculate water height for river flood events.
The studied case, taken from \cite{Petit_et_al_2016} and illustrated in Figure \ref{fig:lit}, is the French Vienne river in permanent regime whose input data are an upstream flowrate, a downstream water level, physical parameters (Strickler coefficients) and geometrical data.
The geometrical data consist of $12$ transverse river profiles.
In summary, the model includes the following random input variables, assumed independent:
\begin{itemize}
\item{$12$ Strickler's friction coefficients of the main channel $K_{s,c}$ (noted $K^1_{s,c}$, \ldots, $K^{12}_{s,c}$) whose distributions are uniform on $[20,40]$,}
\item{$12$ Strickler's friction coefficients of the flood plain $K_{s,p}$ (noted $K^1_{s,p}$, \ldots, $K^{12}_{s,p}$) whose distributions are uniform on $[10,30]$,}
\item{$12$ slope perturbations $dZ$ (noted $dZ^1$, \ldots, $dZ^{12}$) whose distributions are standard Gaussian, truncated on $[-3,3]$,}
\item{and one discharge value $Q$ whose distribution is Gaussian with zero mean, standard deviation $50$ and truncated on $[-150,150]$.}
\end{itemize}

\begin{figure}[!ht]
\centering
\includegraphics[scale=0.25]{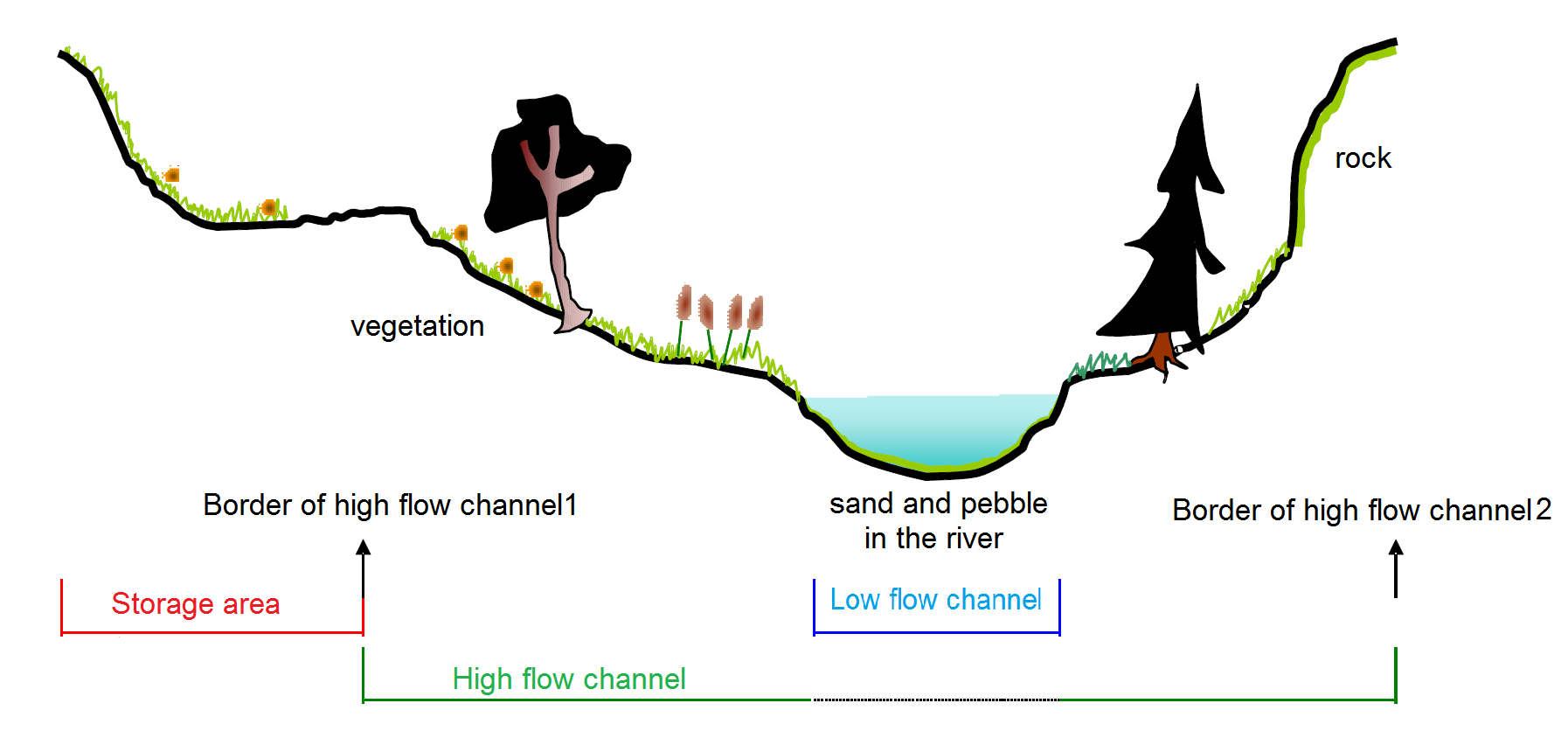}
	\caption{Representation of a river cross section, showing the main channel (low flow channel) and the flood plain (high flow channel). Source: \cite{Petit_et_al_2016}.}\label{fig:lit}
\end{figure}

Our goal in this test case is to update with the new DGSM bounds the work of \cite{Petit_et_al_2016} who compare DGSM bounds and Sobol indices. 
In this previous work, the derivatives of the model output with respect to the $37$ inputs can be efficiently (with a cost independent of the number of inputs) computed by using the adjoint model of Mascaret (obtained by automatic differentiation, \cite{Griewank_Walther_2008}).
From a Monte Carlo sample of the inputs of size $n=20\,000$, $n$ output values (water height) and $n$ output derivatives are obtained.
This very large $n$ has been used for a demonstrative purpose, while in other industrial studies $n$ ranges from $100$ to $1\,000$ (\cite{Iooss_et_al_2012,Touzany_Busby_2014}). 
From the sample of derivatives, the $37$ DGSM $\nu_i$ are then computed.
\cite{Petit_et_al_2016} has shown that $32$ inputs have DGSM-based upper bounds close to zero.
Then, amongst the $37$ inputs, only $5$ are potentially active.

Table \ref{tab:app2} gives our new results for these $5$ remaining inputs.
To compute the DGSM bound, we have also computed $\var(f(\mb{x}))=0.369$ whose standard deviation ($sd=3.4e{-3}$) is obtained by bootstrap, {\it i.e.} by resampling with replacement the Monte Carlo sample of output values.
As inputs $K^{11}_{s,c}$ and $K^{12}_{s,c}$ follow a uniform distribution, we report only the optimal DGSM bounds, already used in \cite{Petit_et_al_2016}.
For the $3$ other inputs ($dZ^{11}$, $dZ^{12}$ and $Q$), as in the previous application, a strong decrease factor is obtained for the bounds by using the logistic constant (gain factor around $4$) and the optimal constant (gain factor around $6$) instead of the double exponential constant.
The new bounds now allow to identify the variable $dZ^{12}$ as inactive.
Moreover, they provide the same ranking than total Sobol indices for the influential inputs.
Notice that the optimal bounds are nearly equal to the total Sobol indices.
The numerical errors, assessed by the standard deviations of the estimates, explain that the bound values are sometimes slightly smaller than the Sobol values.

\begin{table}[!ht]
  \begin{center}
	\begin{tabular}{cccccc}
		Inputs &  $K^{11}_{s,c}$ & $K^{12}_{s,c}$  & $dZ^{11}$ & $dZ^{12}$ & $Q$\\ \hline
		$S^T$ & $0.456$ & $0.0159$ & $0.293$ & $0.015$ & $0.239$ \\ 
		      & $(2e{-3})$ & $(1e{-4})$ & $(1e{-3})$ & $(1e{-4})$ & $(1e{-3})$ \\ \hline
		$\nu$ & $5.695e{-3}$ & $2.728e{-4}$ & $1.089e{-1}$ & $6.592e{-3}$ & $3.553e{-5}$ \\ 
		      & $(3e{-5})$ & $(4e{-6})$ & $(3e{-4})$ & $(9e{-5})$ & $(6e{-8})$ \\ \hline
		\multicolumn{6}{c}{By double exponential transport}\\
		Upper bound for $\poinc(\mu)$ & - & - & $6.249$ & $6.249$ & $15623.26$\\ 
		Upper bound for $S^T$ & - & - & $1.844$ & $0.116$ & $1.504$\\ 
		      & - & - & $(2e{-3})$ & $(2e{-3})$ & $(1.5e{-2})$\\ \hline
		\multicolumn{6}{c}{By logistic transport}\\
		Upper bound for $\poinc(\mu)$ & - & - & $1.562$ & $1.562$ & $3905.815$\\ 
		Upper bound for $S^T$ & - & - & $0.461$ & $0.028$ & $0.376$\\ 
		      & - & - & $(4e{-3})$ & $(5e{-4})$ & $(4e{-3})$\\ \hline
		\multicolumn{6}{c}{Optimal, with the Poincar\'e constant}\\
		$\poinc(\mu)$ & $40.528$ & $40.528$ & $0.976$ & $0.976$ & $2441.071$\\ 
		Optimal bound for $S^T$ & $0.625$ & $0.029$ & $0.288$ & $0.017$ & $0.235$\\ 
		      & $(2e{-4})$ & $(1e{-5})$ & $(3e{-3})$ & $(3e{-4})$ & $(2e{-3})$\\ \hline
		\end{tabular}
  \end{center}
	\caption{Sensitivity indices for the Mascaret test case. $S^T$ gives the total Sobol index, $\nu$ the DGSM. Standard deviations of all these estimates are obtained by bootstrap and given in parentheses. Partial results are not shown for $K^{11}_{s,c}, K^{12}_{s,c}$, which are uniformly distributed.}  
	\label{tab:app2}
\end{table}

In terms of interpretation, the sensitivity analysis results show that the hydraulic engineers have to concentrate the research efforts on the knowledge of the physical parameters $K_s$ and $dZ$ at the $11$th river profile, in order to be able to reduce the prediction uncertainty of the water level when a flood occurs.
From a methodological point of view, the numerical model users are now able to perform a global sensitivity analysis, in the sense of variance decomposition, at a lower cost than before by using the DGSM-based technique.

\section*{Acknowledgements}
We are grateful to Fabrice Gamboa, who initiated this research.
We thank EDF R\&D/LNHE for providing the Mascaret test case and S\'ebastien Petit who has performed the computations on this model.
We also thank Laurence Grammont and the members of the team `G\'enie Math\'ematique \& Industriel' for useful discussions. 
We acknowledge the participants of MascotNum 2016 \& Mexico MascotNum conferences for their feedbacks. 
Part of this research was conducted within the frame of the Chair in Applied Mathematics OQUAIDO, gathering partners in technological research (BRGM, CEA, IFPEN, IRSN, Safran, Storengy) and academia (CNRS, Ecole Centrale de Lyon, Mines Saint-Etienne, University of Grenoble, University of Nice, University of Toulouse) around advanced methods for Computer Experiments.

\section*{Appendix} 
\begin{proof}[End of proof of Theorem~\ref{prop:SpectralTheorem}]
Let us show (\ref{eq:IntegralFormOfPrime}), namely that $\mu$-almost surely,
\begin{equation} 
f'(x) = \frac{\lambda}{\pdf(x)} \int_x^b f(t)\pdf(t)dt.
\end{equation}
Following \cite{Bobkov2009}, Lemma 4.3., by using $g(x) = g(a) + \int_a^x g'(t)dt$ and Fubini's theorem, it holds:
$$ \int_a^b f(x)g(x)\pdf(x)dx = g(a) \int_a^b f(x)\pdf(x)dx + \int_a^b \left( \int_x^b f(t)\pdf(t) dt \right) g'(x)dx$$ 
Recall that for $\lambda>0$, $\int_a^b f \pdf=0$. 
Thus (P2) can be rewritten:
$$ \int_a^b f'(x)g'(x)\pdf(x)dx = \int_a^b \left( \lambda \int_x^b f(t)\pdf(t) dt \right) g'(x)dx $$ 
Since it is true for all $g'$ in $L^2(\mu)$ this gives (\ref{eq:IntegralFormOfPrime}).\\

It remains to show that $f$, a minimizer of the Rayleigh ratio, is strictly monotonic. 
The main arguments can be found e.g. in \cite{Bobkov2009}. Define
$$ g(x) = \int_a^x \vert f' \vert d\mu$$
Firstly, we have $g' = \vert f' \vert$ and thus $\int (g')^2 d \mu = \int (f')^2 d\mu$. Secondly, using the formula $\var_{\mu} g = \frac 12 \iint [g(y) - g(x)]^2 d\mu(x)d\mu(y)$, we have :
\begin{eqnarray*}
\var_{\mu} g &=& \frac12 \iint  \left( \int_x^y \vert f'(t) \vert d\mu(t) \right)^2 d\mu(x)d\mu(y) \\
&\geq& \frac12  \iint \left( \int_x^y f'(t)  d\mu(t) \right)^2 d\mu(x)d\mu(y) = \var_{\mu} f
\end{eqnarray*}
Since $f$ is a minimizer of the Rayleigh ratio, the above inequality must be an equality, 
leading to $\int_x^y \vert f'(t) \vert d\mu(t) = \vert \int_x^y f'(t)  d\mu(t) \vert$ for all $x,y$ in $[a,b]$.
By continuity of $f'$, there exists $\epsilon \in \{-1, +1\}$ ($\epsilon = \textrm{sgn}[f(b) - f(a)]$) such that $\vert f' \vert = \epsilon f'$ everywhere, hence $f$ is monotonic.\\
To see that $f$ is \emph{strictly} monotonic, consider again (\ref{eq:IntegralFormOfPrime}).
Assume for instance that $f$ is increasing ($f' \geq 0$), and let us prove that $f$ is strictly increasing ($f' > 0$).   
Then, as it is centered, the increasing function $f$ is first $\leq 0$ and then $\geq 0$. 
From (\ref{eq:IntegralFormOfPrime}), it implies that $f'$ is first increasing
from $f'(a)=0$ and then decreasing to $f'(b)=0$.
Furthermore, as $f$ is increasing and centered, we must have $f(a) < 0$ (otherwise $f \geq 0$ on $[a,b]$ and hence identically zero).
By continuity, $f<0$ in a neighborhood of $a$.
Hence from (\ref{eq:IntegralFormOfPrime}) 
$f'$ is strictly increasing in a neighborhood of $a$.
Similarly $f'$ is strictly decreasing in a neighborhood of $b$. 
Finally $f' > 0$ on $(a,b)$.\\

For completeness, we sketch the standard way to derive (P2) from (P1) by using calculus of variation. Without loss of generality we assume that $\int f^2 d\mu = 1$. (P1) is then equivalent to:
$$ \min_f \int_a^b \frac12 (f')^2 d\mu \qquad s.t. \int_a^b f^2d\mu = 1, \quad  \int_a^b f d\mu=0 $$ 
Define $L(t, f, f') = (\frac12 f'^2 - \frac12 \lambda (f^2-1) - \beta f) e^{-V(t)}$. The Lagrangian is:
$$ J(f, \lambda, \beta) = \int_a^b L(t, f(t), f'(t)) dt$$
Considering a small deviation function $g$, we have
\begin{eqnarray*}
J(f+ \epsilon g, \lambda, \beta) &=& J(f, \lambda, \beta) + \\
&& \epsilon \int_a^b \left( g(t)\frac{\partial L}{\partial f}(t, f(t), f'(t)) + g'(t)\frac{\partial L}{\partial f}(t, f(t), f'(t))\right) dt + O(\epsilon^2)
\end{eqnarray*}
The first order condition leads to cancel the integral,
which gives here:
$$\int_a^b \left( -\lambda f g + f'g'\right)d\mu = \beta \int_a^b g d\mu$$
Now, with $g=1$ and using $\int_a^b f d\mu=0$, we get $\beta=0$. This gives (P2).
\end{proof}

\subsection*{Optimal constants}
\begin{proof}[Proof of Proposition~\ref{prop:trunc_exp}] 
\bigskip
a) Let us start by the case $0 \leq a < b$. \\
The spectral problem is given by:
$$ f'' - f' = -\lambda f $$
The characteristic equation $r^2 - r = - \lambda$ can be written as: 
$$\left( r - \frac12 \right)^2 = - \omega^2 \qquad \textrm{with} \quad \omega^2 = \lambda - \frac14$$ 
It is known that the spectral gap of the double exponential distribution  on the whole real line is $\frac14$ \cite{Bobkov_Ledoux}.
By monotonicity of the spectral gap with respect to inclusion, we can deduce   that   $\lambda \ge \frac14$.
It seems clear that this inequality must be strict, and this can be proved directly.
Indeed, if $\lambda = \frac 14$, the general form of the solution is $f(t)=(At+B)e^{t/2}$.
Thus $f'(t) = \left( \frac{A}{2} t + \frac{B}{2} + A \right) e^{t/2}$
and the Neumann conditions lead to the linear system $Mu=0$ with:
$$M = \begin{pmatrix} 1 + \frac{a}2 & \frac12 \\ 1 + \frac{b}2 & \frac12 \end{pmatrix}
\quad \textrm{and} \quad u = \begin{pmatrix} A \\ B \end{pmatrix}$$
Since $a\neq b$, this implies $ A = B = 0$ which is impossible.\\
Hence $\lambda > \frac14$, and the solution has the form 
$f(t) = e^{t/2}(A \cos(\omega t) + B \sin(\omega t))$.
By applying Proposition~\ref{prop:exact_spectral_gap},
we deduce that the spectral gap is the first zero of 
$$ d(\lambda) = \cos(\omega a) \sin(\omega b) - \cos(\omega b) \sin(\omega a) = \sin(\omega (b-a))$$
Hence $\omega = \frac{\pi}{b-a}$ and 
$\lambda = \frac 14 + \left( \frac{\pi}{b-a} \right)^2$.\\

b) The same proof is immediately adapted to the case $a < b \leq 0$ by using the change of variables $x \mapsto -x$.\\

c) Let us now assume that $a < 0 < b$. 
By the same argument as in a), we know that $\lambda \geq \frac14$.
Let us temporarily admit that the case $\lambda = \frac14$ is impossible.
Then for $\lambda > \frac14$, the solution has the form:
$$f(x) = \left\{
  \begin{array}{rcr}
  e^{\vert x \vert/2}( A_{+} \cos(\omega x) + B_{+} \sin(\omega x) ) & \, \mathrm{if} \, x \geq 0 \\
  e^{\vert x \vert/2}( A_{-} \cos(\omega x) + B_{-} \sin(\omega x) ) & \, \mathrm{if} \, x \leq 0 \\
  \end{array}
\right. $$
where $\omega^2 = \lambda - \frac14$.
From Theorem~\ref{prop:SpectralTheorem} we know that $f$, searched in $\Sob{1}{\mu}((a,b))$, is actually of class $C^1$, and in particular $f$ and $f'$ must be continuous at $0$.
Conversely, if this condition is satisfied, $f$ will be of class $C^1$ since $f$ is $C^{\infty}$ on $[a, 0[$ and $]0, b]$. Hence $f$ will belong to $\Sob{1}{\mu}((a,b))$, as $\pdf$ is bounded on $[a,b]$.
Thus a necessary and sufficient condition for $f$ to belong to $\Sob{1}{\mu}((a,b))$ is that $f$ and $f'$ are continuous at $0$.\\
The first condition immediately implies $A_{+} = A_{-}$. For the second one, we have:
$$f'(x)e^{-\vert x \vert/2} = \left\{
  \begin{array}{rcrr}
  (A/2 + B_{+}\omega) \cos(\omega x) &+& (B_{+}/2 - A \omega) \sin(\omega x) & \, \mathrm{if} \, x \geq 0 \\
  (- A/2 + B_{-}\omega) \cos(\omega x) &+& (- B_{-}/2 - A \omega) \sin(\omega x) & \, \mathrm{if} \, x \leq 0 \\
  \end{array}
\right. $$
and continuity of $f'$ at $0$ leads to $B_{+} - B_{-} = - A / \omega$. 
Denoting $B = B_{-}$, we have $B_{+} = B - A / \omega$, and $f'$ is rewritten:
$$-f'(x)e^{-\vert x \vert/2} = (A/2 - B \omega) \cos(\omega x) +
\sin(\omega x) \times \left\{
  \begin{array}{rcl}
   -B/2 &+& A \left(\omega + \frac 1 {2\omega} \right) \\
   B/2 &+& A \omega \\
  \end{array}
\right. $$
The Neumann conditions $f'(a)=f'(b)=0$ then provide a linear system in $A, B$ which must be singular, 
leading to the condition $\det(M) = 0$ with:
$$ M = \begin{pmatrix} 
  \frac12 \cos(\omega b) +  \left(\omega + \frac 1 {2\omega} \right) \sin(\omega b)  & 
  -\omega \cos(\omega b) - \frac 12 \sin(\omega b)  \\ 
  \frac12 \cos(\omega a) +  \omega \sin(\omega a)  & 
  -\omega \cos(\omega a) + \frac 12 \sin(\omega a)  \\ 
\end{pmatrix}$$
Using the properties of the determinant, one can replace the first column $C_1$ by $C_1 + \frac1{2\omega}C_2$:
\begin{eqnarray*}
\det(M) &=& \det
\begin{pmatrix} 
 \left(\omega + \frac 1 {4\omega} \right) \sin(\omega b)  & 
  -\omega \cos(\omega b) - \frac 12 \sin(\omega b)  \\ 
 \left(\omega + \frac 1 {4\omega} \right) \sin(\omega a)  & 
  -\omega \cos(\omega a) + \frac 12 \sin(\omega a)  \\ 
\end{pmatrix} \\
&=& \left(\omega + \frac 1 {4\omega} \right) \sin(\omega a) \sin(\omega b) \det
\begin{pmatrix} 
 1  &   -\omega \cotan(\omega b) - \frac 12   \\ 
 1  &   -\omega \cotan(\omega a) + \frac 12   \\ 
\end{pmatrix} \\
&=& \omega \left(\omega + \frac 1 {4\omega} \right) \sin(\omega a) \sin(\omega b)
\big( \cotan(\omega \vert a \vert) + \cotan(\omega \vert b \vert) + 1/\omega \big) 
\end{eqnarray*}
The last factor in the above product is a decreasing function of  $\omega$ with unbounded limits in the interval $]0, c[$, where $c = \min \left(\pi/\vert a \vert, \pi/\vert b \vert \right)$. It has a unique zero on this interval, and it is clearly the 
 first non-negative zero of $\det(M)$ as a function of $\omega$. 
This proves the result for the Poincar\'e constant. 
The inequality $\omega > \frac \pi {b-a}$ is obtained by observing that 
$g: \omega \mapsto \cotan(\vert a \vert \omega ) + \cotan(\vert b \vert \omega )$ cancels at $\frac \pi {b-a} < c$
(for $a < 0 < b$) since $-a \frac \pi {b-a} = \pi - b \frac \pi {b-a}$. 
Thus if $\omega \leq \frac \pi {b-a}$ then 
$g(\omega)+\frac{1}{\omega} \geq \frac{1}{\omega} > 0$.\\
Finally the expression of $f$ is obtained by using the relation between $B_{+}$ and $B_{-}$, 
and the relation $M (A \, B)^T = 0$, which is equivalent (since $\det(M)=0$) to
$ A M_{21} + B M_{22} =0$. Recalling that $\sin(\omega a)$ does not vanish for the selected $\omega < \pi/\vert a \vert$, one can choose
$A = - M_{22}/\sin(\omega a) = \omega \cotan(\omega a) - \frac 12 $
and $B = M_{21}/\sin(\omega a) = \frac12 \cotan(\omega a) +  \omega $.\\

Finally, let us see that $\lambda=\frac14$ is impossible. Then the solution has the form:
$$f(x) = e^{\vert x \vert/2} \times \left\{
  \begin{array}{rccrr}
   A_{+} x &+& B_{+} & \, \mathrm{if} \, x \geq 0 \\
   A x &+& B & \, \mathrm{if} \, x \leq 0 \\
  \end{array}
\right. $$
As above, we express that $f$ and $f'$ are continuous at $0$.
The first condition implies $B_{+} = B$. For the second one, we have:
$$f'(x)e^{-\vert x \vert/2} = \left\{
  \begin{array}{rcrr}
  (A_{+}/2) x &+& A_{+} + B/2 & \, \mathrm{if} \, x \geq 0 \\
  (- A /2) x &+& A - B/2 & \, \mathrm{if} \, x \leq 0 \\
  \end{array}
\right. $$
and continuity of $f'$ at $0$ leads to $A_{+} =A - B$. Hence:
$$f'(x)e^{-\vert x \vert/2} = A - B/2 + \left\{
  \begin{array}{rr}
  (A/2 - B/2) x & \, \mathrm{if} \, x \geq 0 \\
  (-A/2) x  & \, \mathrm{if} \, x \leq 0 \\
  \end{array}
\right. $$
Now the Neumann conditions $f'(a)=f'(b)=0$ provide the linear system $M (A \, B)^T = 0$, with:
$$ M = \begin{pmatrix} 
  1-a/2  & -1/2\\ 
  1+b/2  & -1/2 - b/2 \\ 
\end{pmatrix}$$
This system must be singular otherwise $f$ would be identically zero, hence $\det(M)=0$.
However, $\det(M)=\frac14 (a - b + ab) < 0$ since $a<0<b$.
\end{proof}


\begin{proof}[Proof of Proposition~\ref{prop:triangular}]
For the triangular distribution $\mathcal{T}$ on $\Omega=(-1,1)$, we have: $\mu(dt) = \pdf(t)dt = e^{-V(t)}dt$ with $V(t) = - \ln(1-\vert t \vert)$. 
A simple application of Muckenhoupt criterion (\ref{eq:Muckenhoupt}) shows that $\mu$ admits a Poincar\'e constant. However, we cannot directly apply Theorem~\ref{prop:SpectralTheorem} since $\pdf$ vanishes at the boundary of $\Omega$. 
The idea is to consider an increasing sequence of subintervals $I_\epsilon$ such that $I_\epsilon \uparrow \Omega$ when $\epsilon \rightarrow 0$, for instance 
$I_\epsilon = (-1+\epsilon, 1-\epsilon)$.
Since $\mathcal{T}$ is a symmetric distribution on a symmetric support, 
a minimizer of the Rayleigh ratio can be found among odd functions (Lemma~\ref{lem:symmetry-quotient}).
Thus we can consider the spectral problem on $I_\epsilon^+ = (0, 1 - \epsilon)$:
$$ f''(t) - \frac{1}{1-t} f'(t) + \lambda f(t) = 0$$
with boundary conditions $f(0) = f'(1-\epsilon) = 0$.
By applying the change of variable $f(t) = y(x)$ with $x  = 1 - t$ , 
we obtain the equivalent problem on $(\epsilon, 1)$:
$$ y''(x) + \frac{1}{x} y'(x) + \lambda y(x) = 0$$
with initial conditions $ y(1) = y'(\epsilon) = 0$.
This is a Bessel-type differential equation, which solution has the form (see e.g. \cite{Abra_Steg}, Chapter 9):
$$ y(x) = A J_0 \left(x \sqrt{\lambda} \right) + B Y_0 \left(x \sqrt{\lambda} \right)$$
where $J_0$ and $Y_0$ are the Bessel functions of first and second type.
$J_0$ is an even function of class $C^{\infty}$ on $\R$, whereas $Y_0$ is  $C^{\infty}$ on $\R _+^*$ with infinite limit at $0^+$.
By using the same argument as in Proposition~\ref{prop:exact_spectral_gap}, $\sg(\mu_{\vert I_\epsilon})$ is then the first zero of 
$$ \tilde{d}(\lambda) = \det 
\begin{pmatrix} J_0 \left(\sqrt{\lambda} \right) & Y_0 \left(\sqrt{\lambda} \right) \\ 
J'_0 \left(\epsilon \sqrt{\lambda}\right) & Y'_0 \left(\epsilon \sqrt{\lambda} \right) \end{pmatrix} $$
Now $J'(0)=0$, but $Y'_0$ is not defined at $0$ and $x Y'_0(x) \rightarrow \frac{2}{\pi}$ as $x$ tends to zero (\cite{Abra_Steg}, Chapter 9, Eq. 9.1.9 and 9.1.28). So it is convenient to consider:
$$ d(\lambda, \epsilon) = \epsilon \tilde{d}(\lambda) = \det 
\begin{pmatrix} J_0 \left(\sqrt{\lambda} \right) & Y_0 \left(\sqrt{\lambda} \right) \\ 
\epsilon J'_0 \left(\epsilon \sqrt{\lambda} \right) & \epsilon Y'_0 \left(\epsilon \sqrt{\lambda} \right) \end{pmatrix}$$
By lemma~\ref{lem:continuitySupport}, 
$\sg(\mu) = \textrm{lim}_{\epsilon \rightarrow 0} \sg(\mu_{\vert I_\epsilon})$
and $d$ is continuous at $(\sg(\mu), 0)$.
Thus, it holds:
$$ 0 = d(\sg(\mu_{\vert I_\epsilon}), \epsilon) \underset{\epsilon \rightarrow 0}{\rightarrow} d(\sg(\mu), 0)$$
This proves that $\sg(\mu)$ is a zero of
$$ d(\lambda, 0) = \det 
\begin{pmatrix} J_0 \left(\sqrt{\lambda} \right) & Y_0 \left(\sqrt{\lambda} \right) \\ 
0 & \frac{2}{\pi \sqrt{\lambda}} \end{pmatrix} = \frac{2}{\pi \sqrt{\lambda}} J_0 \left(\sqrt{\lambda} \right).$$
Hence $\sqrt{\sg(\mu)}\ge r_1$, where $r_1$ is the first zero of $J_0$.\\

In order to prove the converse inequality, we compute the Rayleigh quotient of the following
odd function defined on $[-1,1]$: 
$$f(t) := \textrm{sign}(t) J_0 \left(r_1 ( 1 - \vert t \vert ) \right).$$
The expression of $f$ has been deduced from $y(x)=J_0(r_1x)$ by applying the transformation $x=1-t$, followed by a symmetrization.
By construction, $f$ and $f'$ are continuous at $0$. Thus $f$ is $C^1$ on $\overline{\Omega}$ and piecewise $C^2$ on $\Omega$. 
Furthermore, it verifies the following equation on $[0,1)$ 
\begin{equation} \label{eq:equadiffg}
f'' - V' f' + r_1^2 f = 0,
\end{equation}
with boundary conditions $f(0)=f'(1)=0$.
Since $\mu$ is even and $f$ is odd, the Rayleigh quotient of $f$ is simply $\int_0^1 (f')^2\rho/ \int_0^1 f^2\rho.$
In order to calculate this quotient, we multiply Eq.~\ref{eq:equadiffg} by $f \pdf$ and integrate by parts on $[0,1)$. 
However,  since $V = - \ln(\pdf)$ has a singularity at  $1$,  we rather fix $\epsilon > 0$ and work on  $I_\epsilon=[0,1-\epsilon]$.
Observe that $f$ and $f'\pdf$ are  $C^1$ on $I_\epsilon$. 
Starting from Eq.~\ref{eq:equadiffg} and then integrating by parts, we obtain:
$$ r_1^2 \int_{0}^{1-\epsilon} f^2 \pdf =-\int_0^{1-\epsilon}  (f'' - V'f')\pdf f =-\int_0^{1-\epsilon}  (\pdf f')' f =\int_{0}^{1-\epsilon} f'^2 \pdf
- \left[ f'f \pdf \right]_{0}^{1-\epsilon}.$$
The last term tends to $0$, when $\epsilon$ tends to $0$. Thus by monotone convergence,
$ r_1^2 \int_{0}^{1} f^2 \pdf= \int_{0}^{1} f'^2 \pdf$. We have shown that  the Rayleigh ratio of $f$ is equal to $r_1^2$. Therefore $\sg(\mu)\le r_1^2$ and  the proof is complete.  
\end{proof} 


\begin{proof}[Proof of Proposition~\ref{prop:SpectralGapTruncatedNormal}]
Here it is easier to consider the spectral problem with Dirichlet conditions (Problem (P4), see Prop. \ref{prop:NeumannEquivDirichlet}). It is given by:
\begin{equation}
\label{eq:normal_edo}
f''(t) - t f'(t) = - (\lambda - 1) f(t) \qquad s.t. \quad f(a) = f(b) = 0
\end{equation}
Let us look for a series expansion of $f$, i.e. $f(t) = \sum_{n \geq 0} c_n t^n$. Then,
\begin{eqnarray*}
f'(t) &=& \sum_{n \geq 1} n c_n t^{n-1} \quad  \Rightarrow \quad tf'(t) = \sum_{n \geq 0} n c_n t^{n}\\
f''(t) &=& \sum_{n \geq 2} n(n-1) c_n t^{n-2} = \sum_{n \geq 0} (n+2)(n+1) c_{n+2} t^{n}
\end{eqnarray*}
and solving (\ref{eq:normal_edo}) leads to finding a sequence $(c_n)$ satisfying:
\begin{equation}
\label{eq:normal_series}
c_{n+2} = \frac{n - (\lambda - 1)}{(n+1)(n+2)}c_n 
\end{equation}
Let us split $f$ into even an odd part, $f(t) = f_0(t) + t f_1(t)$ 
with $f_0(t) = \sum_{p \geq 0} u_{p}t^{2p}$ and $f_1(t) = \sum_{p \geq 0} v_{p}t^{2p}$,
where $u_p = c_{2p}$ and $v_p = c_{2p+1}$. 
Then by definition of Kummer series (\ref{hypergeom_form}), 
we recognize $f_0(t) = c_0 M_{\frac{1-\lambda}2; \frac12}\left(\frac{t^2}2\right)$
and $f_1(t) = c_1 M_{\frac{2-\lambda}2; \frac32}\left(\frac{t^2}2\right)$. 
Thus, with the notations of the Proposition,
$$ f(t) = c_0 h_0(\lambda, t) + c_1 t h_1(\lambda, t)$$
By applying Proposition~\ref{prop:exact_spectral_gap} and Remark~\ref{rem:prop:exact_spectral_gap},
we obtain that the spectral gap is the first zero of $d(\lambda)$ and hence the first part of the Proposition.
The symmetric case $a=-b$ is also deduced.\\
Furthermore:
\begin{itemize}
\item If there exists $\lambda$ such that $a, b$ are two successive zeros of $h_0(\lambda, .)$,   
 then up to a change of sign, $f = h_0(\lambda, .)$ is a positive solution satisfying $f(a)=f(b)=0$. Hence by Corollary~\ref{cor:sgWhenMonotoneSolution}, $\sg(\truncnorm) = \lambda$. Analogous conclusion stands with $h_1$. 
\item The last item is direct, using that, by construction, $h_0(., 0) = h_1(., 0) = 1$.
\end{itemize}
\end{proof}

\bibliographystyle{plain}
\bibliography{Poincare_Sensitivity.bib}

\end{document}